\newtheorem{theorem}{Theorem}[section]
\newtheorem{lemma}[theorem]{Lemma}
\newtheorem{proposition}[theorem]{Proposition}
\newtheorem{corollary}[theorem]{Corollary}
\theoremstyle{definition}
\newtheorem{definition}[theorem]{Definition}
\newtheorem{example}[theorem]{Example}
\theoremstyle{remark}
\newtheorem{remark}[theorem]{Remark}
\numberwithin{equation}{section}
\newcommand{\Prop}{\begin{proposition}}
\newcommand{\enprop}{\end{proposition}}
\newcommand{\Lemma}{\begin{lemma}}
\newcommand{\enlemma}{\end{lemma}}
\newcommand{\Th}{\begin{theorem}}
\newcommand{\enth}{\end{theorem}}
\newcommand{\Cor}{\begin{corollary}}
\newcommand{\encor}{\end{corollary}}
\newcommand{\Def}{\begin{definition}}
\newcommand{\edf}{\end{definition}}
\def\beq{\begin{equation*}}
\def\eeq{\end{equation*}}
\def\bea{\begin{aligned}}
\def\eea{\end{aligned}}
\def\bee{\begin{enumerate}}
\def\eee{\end{enumerate}}
\let\goth\mathfrak
\def\gh{\goth h}
\def\gb{\goth b}
\def\gn{\goth n}
\def\gq{\goth q}
\def\nn{\nonumber}
\def\cplus{\hbox{$\supset${\raise1.05pt\hbox{\kern -0.55em
${\scriptscriptstyle +}$}}\ }}
\def\gdiw{\Lambda^{+}_{\bar 0}}
\def\qdiw{\Lambda^{+}}
\def\Ao{\mathbf{A}_1}
\def\Jo{\mathbf{J}_1}
 \DeclareMathOperator{\ch}{ch}
\DeclareMathOperator{\Cliff}{Cliff}
\def\Z{\mathbb Z}
\def\C{\mathbb C}
\def\al{\alpha}
\def\ep{\epsilon}
\def\la{\lambda}
\def\tensor{\otimes}
\def\op{\oplus}
\def\ra{\longrightarrow}
\def\map{\longmapsto}
\def\dd{\delta_{ij}}
\def\ol{\overline}
\def\F{\mathbf F}
\def\A{\mathbf A}
\newcommand{\V}{{\mathbf V}}
\newcommand{\seteq}{\mathbin{:=}}
\def\nc{\newcommand}
\nc{\te}{\tilde{e}} \nc{\tei}{\tilde{e}_i} \nc{\tf}{\tilde{f}}
\nc{\tfi}{\tilde{f}_i} \nc{\tU}{\widetilde U_q(\g)}
\nc{\tE}{\tilde{E}} \nc{\tF}{\tilde{F}}
\nc{\tkone}{\tilde{k}_{\ol{1}}} \nc{\teone}{\tilde{e}_{\ol{1}}}
\nc{\tfone}{\tilde{f}_{\ol{1}}}
\nc{\teibar}{\tilde{e}_{\ol{i}}} \nc{\tfibar}{\tilde{f}_{\ol{i}}}
\nc{\tki}{\tilde{k}_{\ol {i}}}
\newcommand{\isoto}[1][]{\mathop{\xrightarrow[#1]%
{{\raisebox{-.6ex}[0ex][-.6ex]{$\mspace{2mu}\sim\mspace{2mu}$}}}}}
\nc{\Uq}{U_q(\goth{q}(n))} \nc{\B}{\mathbf B}
\nc{\BZ}{{\mathbb{Z}}}
\nc{\qs}{{q}} \nc{\lan}{\langle} \nc{\ran}{\rangle}
\nc{\re}{{\mathrm{re}}}
\nc{\Uf}[1][\g]{U^-_q(#1)} \nc{\Ue}{U^+_q(\g)}
\nc{\eps}{\varepsilon} \nc{\vphi}{\varphi} \nc{\sphi}{\varphi^*}
\nc{\seps}{\varepsilon^*}
\def\max{{\mathop{\mathrm{max}}}}
\nc{\vp}{\varpi} \nc{\cls}{{\operatorname{cl}}} \nc{\Wt}{{\rm Wt}}
\nc{\wt}{{\rm wt}} \nc{\Us}{U'_q(\g)}
\nc{\ro}{{\rm(}} \nc{\rf}{{\rm)}} \nc{\qn}{\goth{q}(n)}
\nc{\zb}{\ol{0}} \nc{\ob}{\ol{1}} \nc{\ib}{\ol{i}}
\nc\ghzb{\gh_{\zb}} \nc\ghzbs{\ghzb^*}
\def\La{\Lambda}
\newcommand{\soplus}{\mathop{\mbox{\normalsize$\bigoplus$}}\limits}
\nc\Oint{\mathcal{O}_{int}^{\ge 0}}
\newcommand{\bna}{\begin{enumerate}[{\rm(1)}]}
\nc\ena{\end{enumerate}} \nc{\bni}{\begin{enumerate}[{\rm(i)}]}
\nc\uqqn{U_q(\qn)}
\newcommand{\eq}{\begin{eqnarray}}
\newcommand{\eneq}{\end{eqnarray}}
\def\read{{\rm read}}
\begin{document}

\title{Quantum queer superalgebras}

\author{Ji Hye Jung}
\address{Department of Mathematical Sciences and Research Institute of Mathematics \\
         Seoul National University \\ Seoul 151-747, Korea}
\curraddr{Department of Mathematical Sciences, Korea Advanced Institute of Science and Technology, Daejeon 305-701, Korea}
\email{jhjung@math.snu.ac.kr, jihyejung@kaist.ac.kr}
\thanks{The first author was partially supported by BK21 Mathematical Sciences Division and NRF Grant \# 2010-0019516.}

\author{Seok-Jin Kang}
\address{Department of Mathematical Sciences and Research Institute of Mathematics \\
         Seoul National University \\ Seoul 151-747, Korea}
\email{sjkang@math.snu.ac.kr}
\thanks{The second author was partially supported by KRF Grant \# 2007-341-C00001 and NRF Grant \# 2010-0010753.}

\subjclass{Primary 17B37; Secondary 81R50}



\keywords{quantum queer superalgebra, odd Kashiwara operator,
crystal basis}

\begin{abstract}
We give a brief survey of recent developments in the highest
weight representation theory and the crystal basis theory of the
quantum queer superalgebra $\Uq$.
\end{abstract}

\maketitle

\section*{Introduction}
In this expository article, we give an elementary account of recent
developments in the highest weight representation theory and the
crystal basis theory of {\it quantum queer superalgebra} $\Uq$. The
{\it queer Lie superalgebra} $\qn$ has attracted a great deal of
research activities due to its resemblance to the general linear Lie
algebra $\mathfrak{gl}(n)$ on the one hand and its unique features
in its structure and representation theory on the other hand. The
Lie superalgebra $\qn$ is similar to $\mathfrak{gl}(n)$ in that the
tensor powers of natural representations are all completely
reducible. Moreover, there is a queer analogue of the celebrated
{\it Schur-Weyl duality}, often referred to as the {\it
Schur-Weyl-Sergeev duality}, that was discovered in \cite{O, Ser}.
However, this is about the end of their resemblance and there is a
vast list of differences and discrepancies between these two
algebraic structures. One of the major difficulties lies in that the
Cartan subalgebra of $\qn$ is not abelian and has a nontrivial odd
part. For this reason, it is a very complicated and challenging task
to investigate the structure and representation theory of queer Lie
superalgebra $\qn$ (see, for example, \cite{B, G, K, P, PS, Ser1,
Ser}). Thus a {\em queer} version of the crystal basis theory would
be very helpful in understanding the combinatorial representation
theory of $\gq(n)$.

A quantum deformation $\Uq$ of the universal enveloping algebra
$U(\qn)$ was constructed by Olshanski \cite{O} using a modification
of the Reshetikhin-Takhtajan-Faddeev method \cite{RTF}. In
\cite{GJKK}, Grantcharov, Jung, Kang and Kim gave a presentation of
$\Uq$ in terms of Chevelley generators and Serre relations and
developed the highest weight representation theory of $\Uq$ with a
door open to the crystal basis theory. The authors of \cite{GJKK}
defined the category $\mathcal{O}_{int}^{\ge 0}$, and proved the
{\it classical limit theorem} and the {\it complete reducibility
theorem}. Since the queer Lie superalgebra $\qn$ has a nontrivial
odd Cartan part which is closely related with the Clifford algebra,
the highest weight space of every finite dimensional $\qn$-module
admits a structure of a Clifford module. In \cite{GJKK}, a complete
classification of irreducible quantum Clifford modules was also
given.

In \cite{GJKKK1, GJKKK2}, Grantcharov, Jung, Kang, Kashiwara and Kim
developed the crystal basis theory for $\Uq$-modules in the category
${\mathcal O}_{int}^{\geq 0}$. The authors of \cite{GJKKK1, GJKKK2}
first enlarge the base field to $\C((q))$, the field of formal
Laurent power series and obtain an equivalence of the categories of
Clifford modules and quantum Clifford modules, which yields a
standard version of classical limit theorem. As the next step, they
introduced the {\it odd Kashiwara operators} $\tilde{e}_{\ol{1}}$,
$\tilde{f}_{\ol{1}}$, and $\tilde{k}_{\ol{1}}$, where
$\tilde{k}_{\ol{1}}$ corresponds to an odd element in the Cartan
subsuperalgebra of $\qn$. A {\it crystal basis} for a
$U_q(\qn)$-module $M$ in the category ${\mathcal O}_{int}^{\geq 0}$
is defined to be a triple $(L,B,(l_b)_{b \in B})$, where the crystal
lattice $L$ is a free $\C[[q]]$-submodule of $M$, $B$ is a finite
$\mathfrak{gl} (n)$-crystal,  $(l_b)_{b \in B}$ is a family of
non-zero subspaces of $L/qL$ such that $L / qL = \bigoplus_{b \in B}
l_b$, with a set of compatibility conditions for the action of the
Kashiwara operators. The {\it queer tensor product rule} for odd
Kashiwara operators is very different from the usual ones and is
quite interesting. The main result of \cite{GJKKK1, GJKKK2} is the
existence and the uniqueness theorem for crystal bases. One of the
key ingredients of the proof is the characterization of highest
weight vectors in ${\bf B} \otimes B(\lambda)$ in terms of even
Kashiwara operators and the highest weight vector of $B(\lambda)$.
All these statements are verified simultaneously by a series of
interlocking inductive arguments.

 In \cite{GJKKK3}, Grantcharov, Jung, Kang, Kashiwara and
Kim gave an explicit combinatorial realization of the crystal
$B(\lambda)$ for an irreducible highest weight module $V^q(\lambda)$
in terms of {\it semistandard decomposition tableaux}. A class of
combinatorial objects that describe the tensor representations of
${\mathfrak q}(n)$ has been known for more than thirty years - the
{\it shifted semistandard Young tableaux}. These objects have been
extensively studied  by Sagan, Stembridge, Worley, and others,
leading to important and deep results (in particular, the {\em
shifted Littlewood-Richardson rule}) \cite{Sag, St, Wor}. However,
the set of shifted semistandard Young tableaux of a fixed shape does
not have a natural crystal structure. For this reason, in
\cite{GJKKK3}, it was necessary to use seimistandard decomposition
tableaux instead of shifted semistandard Young tableaux. Moreover,
the authors of \cite{GJKKK3} presented a queer crystal version of
{\it insertion scheme} and proved another version of the shifted
Littlewood-Richardson rule for decomposing the tensor product
$B(\la) \tensor B(\mu)$ for all strict partitions $\la, \mu$. The
insertion scheme in \cite{GJKKK3} is analogous to the one introduced
in \cite{Serra} and can be considered as a variation of those used
for shifted tableaux by Fomin, Haiman, Sagan, and Worley \cite{Fom,
Hai, Sag, Wor}. Consequently, the results of \cite{GJKKK3} establish
a combinatorial description of the {\em shifted
Littlewood-Richardson coefficients}. It is expected that the queer
crystal basis theory will shed a new light on a wide variety of
interesting combinatorics.

In this paper, we do not give any proof. Instead, we only give the
main idea of proofs and some relevant remarks.

\vskip3mm
\section{Queer Lie superalgebra $\gq(n)$} \label{Queer Lie superalgebra}

We begin with the definition of queer Lie superalgebra $\gq(n)$.

\begin{definition}
The {\em queer Lie superalgebra} $\gq(n)$ is the Lie superalgebra
over $\C$ defined in matrix form by
$$
\gq(n):= \left\{ \left( \begin{array}{cc} A& B \\ B & A
\end{array} \right)\; \Big| \; A,B \in \goth{gl}(n,\C) \right\}=\qn_{\zb} \soplus \qn_{\ob},
$$
where
$$
\qn_{\bar{0}}:= \left\{ \left( \begin{array}{cc} A& 0 \\ 0 & A
\end{array} \right) \right\}, ~~ \qn_{\bar{1}}:= \left\{ \left( \begin{array}{cc} 0& B \\ B & 0
\end{array} \right) \right\}.
$$
\end{definition}
The superbracket is defined to be
$$[x,y]=xy-(-1)^{\alpha \beta }yx \quad \text{for} \ \alpha,\beta \in \Z_2 \ \text{and} \ x \in \qn_{\alpha}, y \in
\qn_{\beta}.$$ The (standard) {\em Cartan subalgebra} $\gh
=\gh_{\bar 0} \oplus \gh_{\bar 1}$ is given by
$$\gh_{\zb} =\C k_1 \op \cdots \op \C k_n  \ \text{and} \ \gh_{\ob} =\C k_{\ob} \op \cdots \op
\C k_{\bar n},$$ where
$$ k_i:=  \left( \begin{array}{cc} E_{i,i}& 0 \\  0 & E_{i,i}
\end{array} \right), \quad k_{\ib}=\left(\begin{array}{cc}
    0 & E_{i,i} \\
    E_{i,i} & 0 \\
\end{array}\right),  $$
and $E_{i,j }$ is the $n \times n$ matrix having $1$ at the
$(i,j)$-entry and $0$ elsewhere. Note that the Cartan subalgebra
$\gh$ has a nontrivial odd part $\gh_{\ob}$, and hence $\gh$ is
not abelian.

For $i=1, \ldots, n-1,$ set
$$e_i= \left( \begin{array}{cc} E_{i,i+1}& 0 \\  0 & E_{i,i+1}
\end{array} \right), \quad e_{\ib}=\left( \begin{array}{cc} 0 & E_{i,i+1} \\  E_{i,i+1} &
0
\end{array} \right), $$
and
$$f_i= \left( \begin{array}{cc} E_{i+1,i}& 0 \\  0 & E_{i+1,i}
\end{array} \right), \quad f_{\ib}=\left( \begin{array}{cc} 0 & E_{i+1,i} \\  E_{i+1,i} &
0
\end{array} \right). $$

 Let $\{ \epsilon_1, \ldots, \epsilon_n \}$ be the basis of
$\gh_{\zb}^*$ such that $\epsilon_i(k_j)=\delta_{ij}$ and
$\alpha_i=\epsilon_i-\epsilon_{i+1}$ be the {\em simple roots} for
$i=1, \ldots, n-1$.

  \begin{proposition}\cite[\S 3]{LS}
 The queer Lie superalgebra $\gq(n)$ is generated by the elements $e_i, e_{\ib}, f_i,
  f_{\ib}$ $(i=1, \ldots, n-1)$, $\gh_{\zb}$ and $k_{\ol j}$ $(j =1, \ldots, n)$ with the
  following defining relations:
\begin{align} \label{defining relations of q(n)}
\nonumber & [h,h']=0 \  \ \text{for} \ \ h, h' \in
\gh_{\zb},\\
\nn & [h,e_i]=\al_i(h)e_i, \ [h, f_i]=-\al_i(h)f_i \ \ \text{for} \  \ h \in \gh_{\zb}, \\
\nonumber & [h,k_{\bar{j}}]=0 \ \ \text{for} \ \ h \in \gh_{\zb}, \\
\nn & [e_i, f_j]=\delta_{ij} (k_i-k_{i+1}), \\
\nn & [e_i,e_j]=[f_i, f_j]=0 \ \ \text{if} \ \ |i-j|>1, \\
\nn & [k_{\bar i}, k_{\bar j}]=\dd 2k_{i}, \\
  &[e_i,f_{\bar{j}}]=\dd(k_{\bar{i}}-k_{\overline{i+1}}),
 \ [e_{\bar i},f_j]=\dd(k_{\bar i}-k_{\overline{i+1}}), \\
\nonumber &  [k_{\overline j},e_i]=\al_i(k_{j})e_{\overline i}, \
[k_{\overline j},f_i]=- \al_i(k_j) f_{\overline i}, \\
\nn & [e_i,e_{\bar j}]  = [e_{\ol i}, e_{\ol j}] = [f_i,f_{\bar j}]=[f_{\ol i}, f_{\ol j}]=0 \ \ \text{if} \ \ |i-j| \neq 1,  \\
\nn & [e_{i}, e_{i+1}] = [e_{\bar{i}}, e_{\overline{i+1}}], [e_{i}, e_{\overline{i+1}}] = [e_{\bar{i}}, e_{i+1}], \\
\nn & [f_{i+1}, f_{i}] = [f_{\overline{i+1}}, f_{\bar{i}}],  [f_{i+1}, f_{\bar{i}}]=   [f_{\overline{i+1}}, f_{i}], \\
 \nn & [e_i, [e_i, e_j]]=[f_i, [f_i,f_j]]=0 \ \ \text{if} \ \
|i-j|=1, \\
\nn & [e_{\bar i}, [e_i, e_j]]=[f_{\bar i}, [f_i, f_j]]=0 \ \
\text{if} \ \
 |i-j|=1.
\end{align}
 \end{proposition}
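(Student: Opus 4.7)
The plan is to establish the presentation in two stages: first verify that the listed relations hold in $\gq(n)$, and then show that any Lie superalgebra with this presentation must coincide with $\gq(n)$ by a triangular decomposition and dimension count.

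For the first stage, I would start by checking that the elements $e_i, f_i, e_{\bar i}, f_{\bar i}$ together with $\gh_{\zb}$ and the $k_{\bar j}$ actually generate $\gq(n)$. In the even part, iterated brackets of the $e_i$ (resp.\ $f_i$) produce all matrix units $E_{i,j}$ with $i<j$ (resp.\ $i>j$) inside $\qn_{\zb}$, which is the standard $\gl(n)$ computation. For the odd part, the relation $[k_{\ol j}, e_i] = \al_i(k_j) e_{\ol i}$ shows that the $e_{\ol i}$ are already subsumed in the data, and iterated brackets of the $e_{\ol i}$ with the even $e_j$, $f_j$ produce the remaining odd matrix units. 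Each listed relation is then verified by direct matrix computation, recalling that the superbracket on two odd elements is the anticommutator; for instance $[k_{\ol i}, k_{\ol j}] = k_{\ol i}k_{\ol j} + k_{\ol j} k_{\ol i}$ vanishes for $i\neq j$ and equals $2k_i$ for $i=j$, giving the relation $[k_{\ol i}, k_{\ol j}] = \dd \, 2 k_i$.

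For the second stage, let $\widetilde{\gq}(n)$ denote the Lie superalgebra abstractly defined by the listed generators and relations. The first stage provides a surjective homomorphism $\pi : \widetilde{\gq}(n) \to \gq(n)$, and the goal is to prove $\pi$ is injective, or equivalently that $\dim \widetilde{\gq}(n) \leq \dim \gq(n) = 2n^2$. I would establish a triangular decomposition
\beq
\widetilde{\gq}(n) = \widetilde{\gn}^- + \widetilde{\gh} + \widetilde{\gn}^+,
\eeq
where $\widetilde{\gn}^+$ (resp.\ $\widetilde{\gn}^-$) is the subalgebra generated by the $e_i, e_{\ol i}$ (resp.\ $f_i, f_{\ol i}$) and $\widetilde{\gh}$ is the span of $\gh_{\zb}$ together with the $k_{\ol j}$. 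The relations $[h, e_i] = \al_i(h) e_i$, $[k_{\ol j}, e_i] = \al_i(k_j) e_{\ol i}$, $[e_i, f_j] = \dd(k_i - k_{i+1})$, $[e_i, f_{\ol j}] = \dd(k_{\ol i} - k_{\overline{i+1}})$, and their analogues ensure that the sum on the right-hand side is closed under superbracket, so the decomposition indeed exhausts $\widetilde{\gq}(n)$.

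The main obstacle is to bound $\dim \widetilde{\gn}^{\pm}$ from above by $\dim \gn^{\pm} = n^2 - n$. This is where the Serre-type relations do essential work: the classical relations $[e_i, [e_i, e_j]] = 0$ for $|i-j|=1$ control $\widetilde{\gn}^+_{\zb}$ as in the $\gl(n)$ case, while the odd Serre relations $[e_{\ol i}, [e_i, e_j]] = 0$ together with the vanishing relations $[e_i, e_{\ol j}] = [e_{\ol i}, e_{\ol j}] = 0$ for $|i-j| \neq 1$ and the identifications $[e_i, e_{i+1}] = [e_{\ol i}, e_{\overline{i+1}}]$, $[e_i, e_{\overline{i+1}}] = [e_{\ol i}, e_{i+1}]$ must be shown to force $\widetilde{\gn}^+_{\ob}$ to be spanned by at most $\binom{n}{2}$ root vectors that project to a basis of $\gn^+_{\ob}$. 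A convenient tool here is to consider the natural $(n|n)$-dimensional representation of $\widetilde{\gq}(n)$ induced by $\pi$; if one chooses a highest weight vector and shows that the set of monomials in the $e_i, e_{\ol i}$ modulo the Serre relations maps injectively into $\gq(n)$ acting on this module, then combining this with the triangular decomposition yields $\dim\widetilde{\gq}(n) = 2n^2$ and hence that $\pi$ is an isomorphism.
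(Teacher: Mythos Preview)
The paper does not prove this proposition at all: it is quoted from Leites--Serganova \cite{LS}, and the authors state explicitly in the introduction that ``in this paper, we do not give any proof.'' So there is no argument in the paper to compare your proposal against. That said, your two-stage strategy (verify the relations and generation to get a surjection $\pi\colon\widetilde{\gq}(n)\to\gq(n)$, then bound $\dim\widetilde{\gq}(n)$ via a triangular decomposition and a Serre-relation count) is the standard route for results of this type and is sound in outline.

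One point deserves care. In your final paragraph you propose to use the natural $(n|n)$-dimensional representation of $\widetilde{\gq}(n)$ ``induced by $\pi$'' to check that certain monomials map injectively to $\gq(n)$. This is circular as stated: that representation factors through $\pi$, so any element of $\ker\pi$ acts by zero on it, and the module cannot distinguish $\widetilde{\gq}(n)$ from $\gq(n)$. What actually carries the argument is purely the upper bound: you must use the Serre-type relations and the identifications $[e_i,e_{i+1}]=[e_{\ol i},e_{\overline{i+1}}]$, $[e_i,e_{\overline{i+1}}]=[e_{\ol i},e_{i+1}]$ to produce, for each positive root $\epsilon_i-\epsilon_j$, at most one even and one odd spanning vector of $\widetilde{\gn}^+$, giving $\dim\widetilde{\gn}^+\le n(n-1)$. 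Surjectivity of $\pi$ onto $\gn^+$ (whose dimension you already know) then forces equality; no auxiliary module is needed. The analogous bound for $\widetilde{\gn}^-$ and the obvious $\dim\widetilde{\gh}\le 2n$ finish the job. Your sketch correctly identifies this as the ``main obstacle'' but does not carry it out; the actual root-by-root reduction using the listed relations is where the work lies, and it is this computation that \cite{LS} performs.
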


The elements $e_i, f_i$ ($i=1, \ldots, n-1$) and $h \in \gh_{\zb}$
are regarded as {\em even} generators, and the elements $e_{\ib},
f_{\ib}$ ($i=1, \ldots, n-1$) and $k_{\ol j}$ ($j=1, \ldots, n$)
are regarded as {\em odd} generators. One can see that the
relations involving $e_i, f_i, h$ for $h \in \gh_{\zb}$ are the
same as the relations for the general linear Lie algebra
$\goth{gl}(n)$.

\begin{remark}
We have the relations
$$[k_{\bar i}, e_i] =e_{\bar i}, \ [k_{\bar i},
f_i] =-f_{\bar i}, \ \ \text{and} \ \ [e_i,f_{\bar i}]=k_{\bar
i}-k_{\overline{i+1}}=[e_{\bar i}, f_i].$$
From these relations, it is easy to see that the queer Lie
superalgebra $\gq(n)$ is generated by $e_i, f_i$ $(i=1,
\ldots,n-1)$, $\gh_{\zb}$ and $k_{\bar 1}$ only.
\end{remark}

The universal enveloping algebra $U(\qn)$ of $\qn$ is constructed
from the
tensor algebra $T (\qn)$  
 by factoring out by the ideal generated by the
elements $[u,v]-u \tensor v+(-1)^{\alpha \beta} v \tensor u$,
where $\alpha, \beta \in \Z_2, u \in \qn_{\alpha}, v \in
\qn_{\beta}$. Let $U^+$ (respectively, $U^-$) be the subalgebra
$U(\qn)$ generated by $e_i, e_{\ib}$ (respectively, $f_i,
f_{\ib}$) for $i=1, \ldots, n-1$, and let $U^0$ be the subalgebra
generated by $k_j, k_{\ol j}$ for $j =1, \ldots, n$. By the
Poincar\'e-Birkhoff-Witt theorem in \cite{MM}, we obtain the
triangular decomposition of $U(\qn)$:
$$U(\qn) \cong U^- \tensor U^0 \tensor U^+. $$

\vskip 3mm

\section{Highest weight modules over $\qn$}

 Recall that $\gh_{\zb}=\C k_1 \op \cdots \op \C k_n$, and $\{
\epsilon_1, \ldots, \epsilon_n \}$ is the basis of $\gh_{\zb}^*$
dual to the basis $\{ k_1, \ldots, k_n \}$ of $\gh_{\zb}$.
Let $P :=\Z \epsilon_1 \op \cdots \op \Z \epsilon_n$ be the {\em
weight lattice} and $P^{\vee} :=\Z k_1 \op \cdots \op \Z k_n$ be
the {\em dual weight lattice}.

 \begin{definition}
 Let $\gdiw$ and $\qdiw$ be the set of $\goth{gl}(n)$-{\em
dominant integral weights} and the set of $\gq(n)$-{\em dominant
integral weights} given as follows:
\begin{align}
\nonumber &\gdiw := \{ \la_1\epsilon_1+\cdots+\la_n \epsilon_n\in
\ghzbs ~|~ \la_i-\la_{i+1} \in \Z_{\geq 0} \  \text{for all} \ i =1, \ldots, n-1 \},   \\
 \nonumber &\qdiw := \{ \la_1\epsilon_1+\cdots+\la_n
\epsilon_n\in \gdiw ~|~ \la_i=\la_{i+1} \Rightarrow
\la_i=\la_{i+1}=0 \ \text{for all} \ i=1, \cdots, n-1 \}.
\end{align}
\end{definition}

From now on, for a superalgebra $A$, an $A$-module will be
understood as an $A$-supermodule. A $\qn$-module $V$ is called a
{\em weight module} if it admits a weight space decomposition
$$    V = \bigoplus_{\mu \in \gh_{\bar 0}^*} V_{\mu},
       ~~\text{where}~~ V_{\mu} =\{ v \in V ~|~ h v = \mu(h)v ~~\text{for all}~~ h \in \gh_{\bar 0} \}. $$
For a weight $\qn$-module $V$, we denote by $\wt(V)$ the set of
$\mu \in \gh_{\zb}^*$ such that $V_{\mu} \neq 0$. If $\dim_{\C}
V_{\mu} < \infty$ for all $\mu \in \ghzbs$, the {\em character} of
$V$ is defined to be
  $$\ch V=\sum_{\mu \in \ghzbs} (\dim_{\C}V_{\mu})e^{\mu},  $$
where $e^{\mu}$ are formal basis elements of the group algebra
$\C[\ghzbs]$ with the multiplication $e^{\la}e^{\mu}=e^{\la+\mu}$
for all $\la, \mu \in \ghzbs$.

\begin{definition} A weight module $V$ is called a {\em highest weight module
with highest weight $\la \in {\mathfrak h}_{\ol 0}^*$} if
$V_{\la}$ is finite-dimensional and satisfies the following
conditions:
\begin{enumerate}[{\rm (1)}]
\item $V$ is generated by $V_{\la}$,
\item $e_i v = e_{\ol i} v =0$ for all $v \in V_{\la}$,
$i=1, \ldots, n-1$.
\end{enumerate}   \end{definition}

Note that the highest weight space of a highest weight module is
not one-dimensional.

Let $\gb_+$ be the (standard) {\it Borel subalgebra} of $\qn$
generated by $e_i, e_{\ib}$ ($i=1, \ldots, n-1$) and $k_j, k_{\ol
j }$ for $j=1, \ldots, n$. For $\la \in \ghzbs$, let ${\rm
Cliff}(\la)$ be the associative superalgebra over $\C$ generated
by the odd generators $\{t_{\ol i} \ | \ i=1,2,\ldots, n \}$ with
the defining relations
$$
\begin{array}{cc}
t_{\ol i} t_{\ol j} + t_{\ol j} t_{\ol i} = 2 \delta_{ij} \la_i, &
i,j = 1,2,\ldots,n.
\end{array}
$$

The following propositions are well-known.

\begin{proposition} \cite[Table 2]{ABS} \label{prop:ABS}
The superalgebra ${\rm Cliff}(\la)$ 
has up to isomorphism \begin{enumerate}[{\rm (1)}]
\item two irreducible modules $E(\la)$ and $\Pi(E(\la))$ of dimension $2^{k-1} | 2^{k-1}$ if $m=2k$,
\item one irreducible module $E(\la) \cong \Pi(E(\la))$ of dimension $2^k | 2^k$ if $m=2k+1$,
\end{enumerate}
where $m$ is the number of non-zero parts of $\la \in \ghzbs$ and
$\Pi$ is the parity change functor.
\end{proposition}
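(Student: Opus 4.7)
The plan is to reduce $\mathrm{Cliff}(\lambda)$ to a standard Clifford superalgebra on generators squaring to $1$, classify its simple super-modules by an explicit Fock/spinor construction, and then match the count of simples against the super-Wedderburn structure as in \cite{ABS}.

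The first step is to peel off the zero coordinates of $\lambda$. If $\lambda_i = 0$ then $t_{\bar i}^2 = 0$ and $t_{\bar i}$ anticommutes with every other $t_{\bar j}$, so $t_{\bar i}$ lies in the super-center of $\mathrm{Cliff}(\lambda)$. Since $\C$ admits no nonzero odd scalars, the super version of Schur's lemma forces $t_{\bar i}$ to act as $0$ on any simple module, and the action therefore factors through the quotient obtained by setting $t_{\bar i} = 0$. Iterating, we may assume $\lambda$ has precisely $m$ nonzero coordinates and $n = m$. The rescaling $\tau_i := t_{\bar i}/\sqrt{\lambda_i}$ then normalizes the relations to $\tau_i \tau_j + \tau_j \tau_i = 2 \delta_{ij}$, reducing the problem to the standard complex Clifford superalgebra $\mathrm{Cliff}_m$ of dimension $2^m$.

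Next I would construct the simple super-modules via a spinor/Fock model. Pair the generators into fermionic creation/annihilation operators $a_j^{\pm} := \tfrac{1}{2}(\tau_{2j-1} \pm \sqrt{-1}\,\tau_{2j})$, which satisfy $(a_j^{\pm})^2 = 0$ and $a_i^+ a_j^- + a_j^- a_i^+ = \delta_{ij}$. For $m = 2k$, take an even vacuum $v_0$ annihilated by every $a_j^-$ and set $E(\lambda) := \bigwedge^{\bullet}\mathrm{span}(a_1^+, \ldots, a_k^+) \cdot v_0$, $\Z_2$-graded by fermion parity; this has graded dimension $2^{k-1}|2^{k-1}$, is simple by a direct check using the canonical anticommutation relations, and its parity reversal $\Pi(E(\lambda))$ is a second simple module non-isomorphic to $E(\lambda)$ (any even intertwiner would have to match the parity of the cyclic vacua). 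For $m = 2k+1$, carry out the same construction on $\tau_1, \ldots, \tau_{2k}$ to obtain a Fock space $F$, and then set $E(\lambda) := F \oplus \Pi(F)$ with $\tau_i$ $(i \le 2k)$ acting diagonally and the extra odd generator $\tau_{2k+1}$ interchanging the summands with signs chosen to preserve the anticommutation relations; the resulting simple module has graded dimension $2^k|2^k$ and, because $\tau_{2k+1}$ itself supplies an odd intertwiner between the two summands, satisfies $E(\lambda) \cong \Pi(E(\lambda))$, collapsing the count to a single simple super-module.

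For uniqueness I would exhaust $\mathrm{Cliff}_m$ by a dimension count against the super-Wedderburn structure. In the even case, the action map identifies $\mathrm{Cliff}_{2k}$ with the matrix superalgebra $M(2^{k-1}|2^{k-1})(\C) \cong \mathrm{End}_{\C}(E(\lambda))$ of dimension $2^{2k} = 2^m$, leaving the two constructed simples as the only ones. In the odd case, the image of $\mathrm{Cliff}_{2k+1}$ in $\mathrm{End}_{\C}(E(\lambda))$ is the queer-type subsuperalgebra $Q(2^k)(\C)$ of dimension $2 \cdot (2^k)^2 = 2^m$, so again the dimensions exhaust the algebra and leave $E(\lambda)$ as the unique simple. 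The main obstacle I expect is verifying this super-Schur dichotomy in a self-contained manner -- namely the non-isomorphism of $E(\lambda)$ with $\Pi(E(\lambda))$ in the even case versus their isomorphism in the odd case -- which hinges on the structure of the super-centralizer of the action on the spin module and is precisely what separates cases (1) and (2) of the proposition.
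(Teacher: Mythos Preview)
Your argument is correct and follows the standard route: strip the null generators via super-Schur, rescale to the unit Clifford superalgebra $\mathrm{Cliff}_m$, build the spin module by pairing generators into fermionic oscillators, and then match dimensions against the super-Wedderburn decomposition $\mathrm{Cliff}_{2k}\cong M(2^{k-1}\mid 2^{k-1})$ and $\mathrm{Cliff}_{2k+1}\cong Q(2^k)$. One small refinement: your sentence ``$\C$ admits no nonzero odd scalars'' is not quite the reason $t_{\bar i}$ with $\lambda_i=0$ must act trivially, since in the type-$Q$ case the super-endomorphism algebra of a simple module does contain nonzero odd elements; the point is rather that $t_{\bar i}^2=0$, and a nonzero odd endomorphism of a simple supermodule can never be nilpotent (its kernel would be a proper nonzero graded submodule).

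As for comparison with the paper: there is nothing to compare. The paper does not prove this proposition; it records it as well-known and cites \cite[Table~2]{ABS} (indeed the authors state explicitly that they give no proofs in this survey). Your write-up therefore supplies exactly what the paper defers to the literature, and the approach you chose is essentially the classical one underlying the Atiyah--Bott--Shapiro classification.
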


\begin{proposition} \label{prop:penkov} \cite[Proposition 1]{P}
Let ${\bf v}$ be a finite-dimensional irreducible $\Z_2$-graded
$\gb_+$-module.
\begin{enumerate}[{\rm (1)}]
\item The maximal nilpotent subalgebra $\gn_+$ of $\gb_+$ acts on ${\bf v}$ trivially.
\item There exists a unique weight $\lambda \in \gh_{\bar 0}^*$ such that ${\bf v}$
is a $\Z_{2}$-graded ${\rm Cliff}(\lambda)$-module.
\item  For all $h \in \gh_{\bar 0}, v \in {\bf v}$, we have $h v =\la(h)v$.
\end{enumerate}
\end{proposition}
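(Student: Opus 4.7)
The plan is to establish (1), (3), (2) in that order, since each relies on the previous. For (1), I would produce a vector killed by $\gn_+$ via a maximal-weight argument. Since $\ghz$ is abelian and $\mathbf{v}$ is finite-dimensional, decompose $\mathbf{v} = \bigoplus_\mu \mathbf{v}^\mu$ into generalized $\ghz$-weight spaces. The bracket relations $[h, e_i] = \al_i(h) e_i$ and $[h, e_{\ib}] = \al_i(h) e_{\ib}$ --- the latter obtained from $e_{\ib} = [k_{\ib}, e_i]$ together with $[\ghz, \gh_{\ob}] = 0$ via the super Jacobi identity --- show that both $e_i$ and $e_{\ib}$ shift the $\ghz$-weight by $\al_i$. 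Pick $\la \in \ghzbs$ with $\mathbf{v}^\la \neq 0$ and $\mathbf{v}^{\la+\al_i} = 0$ for every $i$; then $\mathbf{v}^\la$ lies in $W := \{v \in \mathbf{v} \mid \gn_+ v = 0\}$. Because $[\gh, \gn_+] \subseteq \gn_+$, the subspace $W$ is $\gh$-stable, hence a $\ZZ$-graded $\gb_+$-submodule, and irreducibility forces $W = \mathbf{v}$.

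For (3), by (1) the $\gb_+$-action on $\mathbf{v}$ factors through $\gh = \gb_+/\gn_+$. The defining relations give $[\ghz, \gh] = 0$, and since $\gn_+$ acts by zero each $h \in \ghz$ commutes with the whole $\gb_+$-action on $\mathbf{v}$, i.e.\ defines an even element of $\End_{\gb_+}(\mathbf{v})$. The super Schur lemma, applied to a finite-dimensional $\ZZ$-graded irreducible over $\C$, forces the even part of this endomorphism superalgebra to equal $\C$; hence $h$ acts by a scalar $\la(h)$, yielding a unique weight $\la \in \ghzbs$ with $hv = \la(h) v$ for all $v \in \mathbf{v}$. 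For (2), substituting the scalar action of $k_i$ into $[k_{\ib}, k_{\ol j}] = 2\dd k_i$ produces $k_{\ib} k_{\ol j} + k_{\ol j} k_{\ib} = 2\dd \la_i$ on $\mathbf{v}$, which is exactly the defining relation of $\Cliff(\la)$ under $t_{\ib} \leftrightarrow k_{\ib}$. The resulting Clifford module is irreducible because any $\Cliff(\la)$-submodule of $\mathbf{v}$ is automatically stable under $\ghz$ (scalars) and under $\gn_+$ (zero), hence a $\gb_+$-submodule. Uniqueness of $\la$ is immediate from (3).

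The main obstacle is part (1): $\ghz$ is not known to act semisimply on $\mathbf{v}$ \emph{a priori}, so one must genuinely work with generalized weight spaces rather than honest weight spaces --- only after (3) is it clear that all generalized weight spaces collapse onto the single character $\la$. The super Schur step in (3) also deserves some care because irreducible $\ZZ$-graded modules over $\C$ come in both type M and type Q flavors; but in each case the even part of the endomorphism superalgebra is $\C$, which is all that is required here.
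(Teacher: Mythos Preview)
The paper does not supply its own proof of this proposition; it simply quotes the result from Penkov \cite{P} and moves on, consistent with the authors' stated policy of giving only sketches and remarks in this survey. So there is no in-paper argument to compare against.

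Evaluated on its own, your argument is correct and is essentially the standard one. A couple of minor remarks. In part (1), your existence of a ``maximal'' generalized weight $\la$ uses that the set of generalized $\ghz$-weights of $\mathbf{v}$ is finite (finite-dimensionality) and that the $\alpha_i$ lie in an open half-space, so one can always find a weight with no $\la+\alpha_i$ occurring; this is routine but worth a word. The claim that $e_i,e_{\ib}$ shift \emph{generalized} weight spaces by $\alpha_i$ follows from $(h-(\mu+\alpha_i)(h))^N e = e(h-\mu(h))^N$ on weight-$\alpha_i$ root vectors, which you implicitly use. In part (3), your handling of the super Schur lemma is exactly right: whether $\mathbf{v}$ is of type M or type Q, the even part of $\End_{\gb_+}(\mathbf{v})$ is $\C$, which is all you need. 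Your extra observation that $\mathbf{v}$ is in fact an \emph{irreducible} $\Cl(\la)$-module is correct and slightly stronger than what the proposition asserts; it is precisely this refinement that, combined with Proposition~\ref{prop:ABS}, yields the classification the paper alludes to immediately after the statement.
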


By Proposition \ref{prop:ABS} and Proposition \ref{prop:penkov},
we get a complete classification of finite-dimensional irreducible
$\gb_+$-modules.

\begin{definition}
Let $\bf{v}(\lambda)$ be a finite-dimensional irreducible
$\gb_+$-module determined by $\lambda$. The {\em Weyl module}
$W(\lambda)$ corresponding to $\lambda$ is defined to be
$$W(\lambda) := U(\gq(n)) \otimes_{U(\gb_+)} \bf v (\lambda).$$
\end{definition}
\noindent Note that $W(\lambda)$ is defined up to $\Pi$.

\begin{theorem} \label{thm:P} \cite[Theorem 2, 4]{P}
\begin{enumerate}[{\rm (1)}]
\item For any weight $\lambda$, $W(\lambda)$ has a unique maximal
submodule $N(\lambda)$.
\item For each finite-dimensional irreducible $\gq(n)$-module $V$,
there exists a unique weight $\lambda \in \gdiw$ such that $V$ is
a homomorphic image of $W(\lambda)$.
\item The irreducible quotient $V(\lambda):=W(\lambda)/N(\lambda)$ is finite-dimensional if and only if $\lambda \in
\qdiw$.
\end{enumerate}
\end{theorem}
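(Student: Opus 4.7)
The plan is to handle the three parts in sequence, with the key engines being the PBW decomposition of $W(\lambda)$, the Clifford structure on highest weight spaces supplied by Proposition \ref{prop:penkov}, and a $\gsl_2$-style analysis along each simple root.

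For part (1), the triangular decomposition yields $W(\lambda) \cong U^- \otimes_{\C} \mathbf{v}(\lambda)$ as weight-graded vector superspaces, so $W(\lambda)_\lambda \cong \mathbf{v}(\lambda)$ while every other weight is strictly below $\lambda$ in the root order. Since $\mathbf{v}(\lambda)$ is $\gb_+$-irreducible and generates $W(\lambda)$, any proper submodule $M$ must intersect $\mathbf{v}(\lambda)$ trivially: otherwise $M \cap W(\lambda)_\lambda$ would be a nonzero $\gb_+$-submodule of $\mathbf{v}(\lambda)$, hence all of it, forcing $M = W(\lambda)$. Therefore the sum $N(\lambda)$ of all proper submodules is itself proper and is the unique maximal submodule.

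For part (2), I pick a maximal weight $\lambda \in \wt(V)$ in the root partial order, so $e_i$ and $e_{\ol i}$ annihilate $V_\lambda$. Proposition \ref{prop:penkov} then identifies $V_\lambda$ with an irreducible $\Cliff(\lambda)$-module $\mathbf{v}(\lambda)$; Frobenius reciprocity yields a surjection $W(\lambda) \twoheadrightarrow V$, and part (1) forces $V \cong V(\lambda)$. The dominance $\lambda \in \gdiw$ is extracted from the even $\gsl_2$-triples $\{e_i, f_i, k_i - k_{i+1}\}$ using finite-dimensional $\gsl_2$ theory, and the uniqueness of $\lambda$ is immediate as the unique maximum of $\wt(V)$.

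For part (3), $(\Leftarrow)$ follows from the Schur-Weyl-Sergeev duality \cite{O, Ser}: the irreducible summands of tensor powers of the natural representation $\C^{n|n}$ are indexed by strict partitions, giving a finite-dimensional irreducible with highest weight $\lambda$ for each $\lambda \in \qdiw$, which must coincide with $V(\lambda)$ by part (2). For $(\Rightarrow)$, part (2) already gives $\lambda \in \gdiw$, and I assume toward contradiction that $\lambda_i = \lambda_{i+1} = a \ne 0$ for some $i$. The Clifford identities $[k_{\ol i}, k_{\overline{i+1}}] = 0$ and $k_{\ol i}^2 = k_i$ give $(k_{\ol i} - k_{\overline{i+1}})^2 v = 2a\, v$ on $V_\lambda$, making this operator invertible. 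Since $e_{\ol i} f_i v = (k_{\ol i} - k_{\overline{i+1}}) v$ is therefore a nonzero element of $\mathbf{v}(\lambda)$, and iterating the even identity $e_i f_i^k v = -k(k-1) f_i^{k-1} v$ shows that every $f_i^k v$ $(k \ge 1)$ can be routed back to $\mathbf{v}(\lambda)$ via $e_{\ol i} e_i^{k-1}$, any submodule of $W(\lambda)$ containing $f_i^k v$ must hit $\mathbf{v}(\lambda)$ nontrivially and hence equal $W(\lambda)$. Thus $f_i^k v \ne 0$ in $V(\lambda)$ for all $k$, yielding an infinite linearly independent family of weight vectors and contradicting finite-dimensionality.

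The main obstacle is the necessity direction in (3): extracting an explicit infinite chain under the hypothesis $\lambda_i = \lambda_{i+1} \ne 0$ requires the delicate interplay between the Clifford relations, which power the invertibility of $k_{\ol i} - k_{\overline{i+1}}$ on $V_\lambda$, and the standard $\gsl_2$-commutator identities, which allow one to travel back from deep inside $U^- \cdot V_\lambda$ to the highest weight space. This combined even-odd analysis has no counterpart in the $\goth{gl}(n)$ theory.
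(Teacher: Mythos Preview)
The paper does not supply its own proof of this theorem; it is quoted from Penkov \cite{P} without argument, in keeping with the paper's stated policy of giving only main ideas. So there is no in-paper approach to compare against, and the question is simply whether your argument is correct.

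Parts (1), (2), and the necessity direction of (3) are fine. Your treatment of $(\Rightarrow)$ in (3) is clean: the Clifford computation $(k_{\ol i}-k_{\ol{i+1}})^{2} = k_i + k_{i+1}$ together with the identity $e_i f_i^{k} v = -k(k-1) f_i^{k-1} v$ (valid precisely because $\lambda_i - \lambda_{i+1} = 0$) does show that $e_{\ol i}\,e_i^{k-1} f_i^{k} v$ is a nonzero element of $\mathbf{v}(\lambda)$, so no $f_i^{k} v$ can lie in $N(\lambda)$.

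There is, however, a genuine gap in $(\Leftarrow)$ of (3). Schur--Weyl--Sergeev duality only produces finite-dimensional irreducibles with highest weights in $\qdiw \cap P^{\ge 0}$, that is, strict partitions with non-negative integer parts. The set $\qdiw$ as defined here is strictly larger: the entries $\lambda_j$ are only required to satisfy $\lambda_j - \lambda_{j+1} \in \Z_{\ge 0}$ together with the strictness condition, so weights such as $\lambda = (1,-1)$ for $n=2$, or $\lambda = (c+2,\,c+1,\,c)$ with $c \notin \Z_{\ge 0}$ for $n=3$, lie in $\qdiw$ but are not reached by any tensor power of the natural representation. Nor can you shift into the polynomial range by tensoring with a one-dimensional module: the relation $k_{\ol j}^{2} = k_j$ forces every $k_j$ to act by zero on a $(1\vert 0)$- or $(0\vert 1)$-dimensional module, so the only such module is trivial. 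You therefore need a separate argument for finite-dimensionality when $\lambda \in \qdiw \setminus P^{\ge 0}$ --- for instance, a direct verification that $f_i^{\,\lambda_i - \lambda_{i+1}+1}\,\mathbf{v}(\lambda) \subset N(\lambda)$ using the interplay of the even $\gsl_2$-triple with the odd generators, after which Proposition~\ref{prop:GJKK}(4) (or rather its $\gq(n)$-level analogue) finishes the job.
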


Set $P^{\geq 0} = \{ \la=\la_1 \ep_1 + \cdots + \la_n \ep_n \in P
\ | \ \la_j \geq 0 \quad \text{for all} \ j=1,2,\ldots,n \}$.

\begin{definition}
 The {\em category
$\mathcal{O}^{\geq 0}$} consists of finite-dimensional
$U(\gq(n))$-modules $M$ with a weight space decomposition
satisfying the following conditions:

{\rm (1)} $\wt(M) \subset P^{\ge 0}$,

{\rm (2)} if $\lan k_i,\mu\ran=0$ for $\mu\in P^{\ge0}$ and $i \in
\{1,\ldots, n\}$, then $k_{\ol i}$ acts trivially on $M_{\mu}$.
\end{definition}

The category $\mathcal{O}^{\geq 0}$ is closed under finite direct
sum, tensor product and taking submodules and quotient modules.

\begin{proposition} \label{prop:GJKK} \cite[Proposition 1.6, 1.8, 1.9]{GJKK}
\begin{enumerate}[{\rm (1)}]
\item For each $\lambda \in \qdiw \cap P^{\geq 0}$, the irreducible quotient
$V(\lambda)=W(\lambda)/N(\lambda)$ lies in the category ${\mathcal
O}^{\geq 0}.$

\item Every irreducible $U(\gq(n))$-module in the category ${\mathcal
O}^{\geq 0}$ has the form $V(\lambda)$ for some $\lambda \in \qdiw
\cap P^{\geq 0}$.

\item If $V$ is a finite-dimensional highest weight module
with highest weight $\lambda \in \Lambda^+ \cap P^{\geq 0}$ and
$V_{\la}$ is an irreducible $\gb_+$-submodule of V, then $V \simeq
V(\lambda)$ (up to $\Pi$).

\item If $V$ is a highest weight module with highest weight
$\lambda \in \Lambda^{+}$ and $f_i^{\lambda(h_i)+1} v = 0$ for all
$v \in V_{\lambda}$, $i =1,2,\ldots, n-1$, then $\dim V < \infty$.
\end{enumerate}
\end{proposition}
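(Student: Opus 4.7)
My plan is to establish the four assertions in the order $(4), (1), (2), (3)$, since $(4)$ is a finite-dimensionality criterion that will be needed inside $(3)$, and $(1)$--$(2)$ set up the existence and exhaustiveness of the $V(\la)$'s before invoking these in $(3)$. The two main tools are the $\Z_2$-graded PBW decomposition $U(\gq(n)) \cong \Lambda(\gq(n)_{\ob}) \tensor U(\gq(n)_{\zb})$ as vector spaces (using that $\gq(n)_{\ob}$ is purely odd of dimension $n^2$) and the classical integrability theorem for $\goth{gl}(n)$.

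For $(4)$, set $V' \seteq U(\gq(n)_{\zb}) V_\la$. The hypotheses $\la \in \qdiw$ and $f_i^{\la(h_i)+1} V_\la = 0$, together with $\gq(n)_{\zb} \cong \goth{gl}(n)$, say that $V'$ is an integrable $\goth{gl}(n)$-module generated by the finite-dimensional space $V_\la$, so by the classical theorem $\dim V' < \infty$. Since $\Lambda(\gq(n)_{\ob})$ is finite-dimensional, PBW gives $V = U(\gq(n)) V_\la = \Lambda(\gq(n)_{\ob}) \cdot V'$, which is finite-dimensional.

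For $(1)$, Theorem \ref{thm:P}$(3)$ gives $\dim V(\la) < \infty$. Since $V(\la)$ is generated by $V(\la)_\la$ under $f_i, f_{\ib}$ (weights $-\al_i$) and $k_{\ol j}$ (weight $0$), every weight lies in $\la + \sum_i \Z_{\leq 0}\, \al_i$; restricting to the finite-dimensional $\goth{gl}(n)$-submodule generated by $V(\la)_\la$ together with $\la \in P^{\geq 0}$ forces $\wt V(\la) \subset P^{\geq 0}$ via the convex-hull description of $\goth{gl}(n)$-weights. The Clifford compatibility for $k_{\ib}$ on $V(\la)_\mu$ with $\mu_i = 0$ follows from the relation $k_{\ib}^2 = k_i$ (a consequence of $[k_{\ib},k_{\ib}] = 2k_i$): $k_{\ib}$ is a square-zero odd operator on $V(\la)_\mu$, and combining the Clifford classification of Proposition \ref{prop:ABS} with irreducibility of $V(\la)$ and the Clifford structure inherited from $V_\la$ forces $k_{\ib}$ to vanish there. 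For $(2)$, any irreducible $M \in \mathcal{O}^{\geq 0}$ is finite-dimensional, hence has a maximal weight $\la \in P^{\geq 0}$; the $\gb_+$-stable space $M_\la$ contains an irreducible $\gb_+$-submodule $\mathbf{v}$ by Proposition \ref{prop:penkov}, and irreducibility of $M$ gives $M = U(\gq(n)) \mathbf{v}$, exhibiting $M$ as a homomorphic image of $W(\la)$. Theorem \ref{thm:P}$(2)$--$(3)$ then yield $M \cong V(\la)$ with $\la \in \qdiw$.

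For $(3)$, Propositions \ref{prop:ABS} and \ref{prop:penkov} identify $V_\la$ up to $\Pi$ with $\mathbf{v}(\la)$, so the assumption that $V_\la$ generates $V$ yields a surjection $W(\la) \to V$ whose kernel lies in the unique maximal submodule $N(\la)$ of Theorem \ref{thm:P}$(1)$. Running the weight argument of $(1)$ on $V$, using $(4)$ for finite-dimensionality, puts $V \in \mathcal{O}^{\geq 0}$, and the complete reducibility theorem of \cite{GJKK} decomposes $V \cong \bigoplus_j V(\mu_j)$; since $V$ is generated by $V_\la$, each $\mu_j$ must equal $\la$, and irreducibility of $V_\la$ as a $\gb_+$-module pins down a single summand. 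The main obstacle I anticipate is the Clifford compatibility in $(1)$: the relation $k_{\ib}^2 = k_i$ only shows $k_{\ib}$ squares to zero on $V(\la)_\mu$ when $\mu_i = 0$, and upgrading this to $k_{\ib} = 0$ requires a careful propagation of the Clifford structure from $V(\la)_\la$ across all weight spaces, via the bracket relations $[k_{\ib}, e_j] = \al_j(k_i) e_{\ib}$ and the precise description of the irreducible $\mathrm{Cliff}(\mu)$-modules from Proposition \ref{prop:ABS}.
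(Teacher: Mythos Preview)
The paper itself gives no proof here; it simply cites \cite[Proposition 1.6, 1.8, 1.9]{GJKK}. So I evaluate your argument on its own merits.

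Your treatment of (4) and (2) is correct and standard. Two genuine issues remain.

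In (1), the weight argument is slightly off: the $\goth{gl}(n)$-submodule generated by $V(\la)_\la$ is in general strictly smaller than $V(\la)$, so you cannot read off $\wt V(\la) \subset P^{\geq 0}$ from it alone. The fix is easy---$V(\la)$ is itself a finite-dimensional $\goth{gl}(n)$-module with all weights $\leq \la$, and Weyl-invariance of its weight multiset together with a short dominance computation forces every weight into $P^{\geq 0}$. More seriously, your flagged obstacle, the Clifford compatibility $k_{\ol i}|_{V(\la)_\mu} = 0$ when $\mu_i = 0$, does not follow from $k_{\ol i}^2 = 0$ alone, and ``propagation via bracket relations'' is not yet an argument: there exist $\qn$-modules with weights in $P^{\geq 0}$ on which $k_{\ol i}$ acts as a nonzero square-zero operator on a $\mu_i = 0$ weight space, so irreducibility of $V(\la)$ must enter in an essential way. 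One clean route is to observe (via Sergeev's duality \cite{Ser}) that every $V(\la)$ with $\la$ a strict partition occurs in a tensor power of the natural representation; since the natural representation lies in $\mathcal{O}^{\geq 0}$ and the category is closed under tensor products and subquotients, the conclusion follows.

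In (3), your appeal to ``the complete reducibility theorem of \cite{GJKK}'' is circular. The only complete reducibility statement in \cite{GJKK} is for the quantum category $\Oint$, and its proof passes through the classical limit theorem, which in turn explicitly invokes Proposition~\ref{prop:GJKK} (this is visible in the proof sketch of the classical limit theorem in Section~4 of the present survey). You need a direct argument that any finite-dimensional quotient of $W(\la)$ with irreducible $\la$-weight space is already $V(\la)$; this is where the actual content of (3) lies, and it is not addressed. Note also that ``running the weight argument of (1) on $V$'' imports the unresolved Clifford-compatibility problem into (3), where you no longer even have irreducibility of $V$ to lean on.
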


Note that every element $\la$ of $\qdiw \cap P^{\geq 0}$ is the
form
$$\la_1 > \la_2 > \cdots > \la_r > \la_{r+1}= \cdots =\la_n=0$$ for
some $r$. Hence we can identify an element $\lambda$ of $\qdiw
\cap P^{\geq 0}$ with a strict partition. We denote by $\ell(\la)=r$
and $|\la|=\la_1 + \cdots +\la_r$.

\vskip 3mm

\section{Quantum queer superalgebra $U_q(\gq(n))$}

Let $\F=\C((q))$ be the field of formal Laurent series in an
indeterminate $q$ and let $\A=\C[[q]]$ be the subring of $\F$
consisting of formal power series in $q$. For $k \in \Z_{\ge 0}$,
we define
$$[k]= \frac{q^k - q^{-k}}{q - q^{-1}}, \quad [0]!=1, \quad [k]! =
[k] [k-1] \cdots [2][1].$$ In \cite[\S 4]{O}, Olshanski constructed
a quantum deformation $U_q(\gq(n))$ of $U(\qn)$ using a modification
of the Reshetikhin-Takhtajan-Faddeev method. In \cite[Theorem
2.1]{GJKK}, based on Olshanski's construction, we obtain the
following presentation of $\Uq$, which is taken to be the
definition.

\begin{definition}
   The {\em quantum queer superalgebra} $U_{q}(\gq(n))$ is an $\F$-superalgebra
generated by the elements $e_i,e_{\ol{i}}, f_i, f_{\ol{i}}$, $(i =
1,...,n-1)$, $k_{\bar{j}}$, $(j = 1,...,n)$ and $q^h$ $(h \in
P^{\vee})$ with the following defining relations:
\begin{align}  \label{eq:defining relations for Uqqn}
\allowdisplaybreaks \nonumber & q^{0}=1, \ \ q^{h_1} q^{h_2} =
q^{h_1 + h_2}  \quad \text{for} \ h_1,
h_2 \in P^{\vee}, \\
\nonumber & q^h e_i q^{-h} = q^{\alpha_i(h)} e_i \quad \text{for} \ h \in P^{\vee}, \displaybreak[1]\\
\nonumber & q^h f_i q^{-h} = q^{-\alpha_i(h)} f_i  \quad \text{for} \ h\in P^{\vee}, \displaybreak[1]\\
\nonumber & q^h k_{\ol j} = k_{\ol j} q^h  \quad \text{for} \ h\in P^{\vee}, \displaybreak[1]\\
\nonumber & e_i f_j - f_j e_i = \delta_{ij} \dfrac{q^{k_i -
k_{i+1}} - q^{-k_i
+ k_{i+1}}}{q-q^{-1}}, \displaybreak[1]\\
\nonumber & e_i e_j - e_j e_i = f_i f_j - f_j f_i = 0 \quad \text{if} \ |i-j|>1, \displaybreak[1]\\
\nonumber & e_i^2 e_j -(q+q^{-1}) e_i e_j e_i  + e_j e_i^2= 0  \quad \text{if} \ |i-j|=1,\displaybreak[1]\\
\nonumber & f_i^2 f_j - (q+q^{-1}) f_i f_j f_i + f_j f_i^2 = 0  \quad \text{if} \ |i-j|=1,\displaybreak[1]\\
\nonumber & k_{\ol i}^2 = \dfrac{q^{2k_i} - q^{-2k_i}}{q^2 - q^{-2}}, \displaybreak[1]\\
 & k_{\ol i} k_{\ol j} + k_{\ol j} k_{\ol i} =0 \quad
\text{if} \ i \neq j, \displaybreak[1]\\
\nonumber & k_{\ol i} e_i - q e_i k_{\ol i} = e_{\ol i} q^{-k_i}, \ q k_{\ib}e_{i-1}- e_{i-1}k_{\ib}=-q^{-k_i} e_{\ol{i-1}}, \displaybreak[1]\\
\nn & k_{\ib}e_j-e_jk_{\bar i}=0 \quad \text{if} \ j \neq i,i-1, \displaybreak[1]\\
\nn & k_{\ol i} f_i - q f_i k_{\ol i} = -f_{\ol i} q^{k_i}, \ q k_{\ib} f_{i-1}-f_{i-1}k_{\ib}=q^{k_i}f_{\ol{i-1}}, \displaybreak[1]\\
\nn & k_{\ib}f_j-f_jk_{\ib}=0 \quad \text{if} \ j \neq i, i-1, \displaybreak[1]\\
 \nonumber & e_i f_{\ol j} - f_{\ol j} e_i = \delta_{ij}
(k_{\ol i}
q^{-k_{i+1}} - k_{\ol{i+1}} q^{-k_i}), \displaybreak[1] \\
\nonumber & e_{\ol i} f_j - f_j e_{\ol i} = \delta_{ij} (k_{\ol i}
q^{k_{i+1}} - k_{\ol{i+1}} q^{k_i}), \displaybreak[1]\\
\nonumber &e_i e_{\ol i} - e_{\ol i} e_i = f_i f_{\ol i} - f_{\ol i} f_i = 0, \displaybreak[1]\\
\nonumber &e_i e_{i+1} - q e_{i+1}e_i =
e_{\overline{i}}e_{\overline{i+1}}+ q
e_{\overline{i+1}}e_{\overline{i}}, \displaybreak[1]\\
\nonumber& q f_{i+1}f_i - f_i f_{i+1} =
 f_{\overline{i}}f_{\overline{i+1}}+ q f_{\overline{i+1}}f_{\overline{i}}, \displaybreak[1]\\
\nonumber & e_i^2 e_{\overline{j}} - (q+q^{-1})e_i
e_{\overline{j}}
e_i + e_{\overline{j}} e_i^2= 0 \quad \text{if} \ |i-j|=1, \displaybreak[1]\\
\nonumber & f_i^2 f_{\overline{j}} - (q+q^{-1})f_i
f_{\overline{j}} f_i + f_{\overline{j}} f_i^2=0 \quad \text{if} \
|i-j|=1.
\end{align}
\end{definition}

The generators $e_i$, $f_i$ $(i=1, \ldots, n-1)$, $q^{h}$ ($h\in
P^\vee$) are regarded as {\em even} and $e_{\ol i}$, $f_{\ol i}$
$(i=1, \ldots, n-1)$, $k_{\ol j}$ $(j=1, \ldots, n)$ are {\em
odd}. From the defining relations, we can see that the even
generators together with $k_{\ol 1}$ generate the whole algebra
$\Uq$.

In \cite[\S 4]{O}, Olshanski showed that the quantum queer
superalgebra $U_q(\qn)$ is a Hopf superalgebra. The comultiplication
$\Delta$ is given as follows:
\begin{equation} \label{eq:comultiplication}
\begin{aligned}
& \Delta(q^{h})  = q^{h} \otimes q^{h}\quad\text{for $h\in P^\vee$,} \\
& \Delta(e_i)  = e_i \otimes q^{-k_i + k_{i+1}} + 1 \otimes e_i, \\
& \Delta(f_i)  = f_i \otimes 1 + q^{k_i - k_{i+1}} \otimes f_i, \\
& \Delta(k_{\ol 1}) =k_{\ol 1}\otimes q^{k_1}+ q^{-k_1} \otimes
k_{\ol 1}.
\end{aligned} \end{equation}

 Let $U_q^{+}$ (respectively, \ $U_q^{-}$) be the subalgebra of
$U_q(\mathfrak{q}(n))$ generated by $e_i$, $e_{\ol i}$
(respectively, \ $f_i$, $f_{\ol i}$) for $i=1,\ldots, n-1$, and
let $U_q^{0}$ be the subalgebra generated by $q^{h}$ and $k_{\ol
j}$ for $h\in P^\vee$, $j=1, \ldots, n$.  Then we obtain the
following {\em triangular decomposition of $U_q(\qn)$.}

\begin{proposition}\cite[Theorem 2.3]{GJKK}
There is a $\mathbb{C}((q))$-linear isomorphism
$$ U_q(\gq(n)) \simeq U_q^- \otimes U_q^0 \otimes U_q^+.$$
\end{proposition}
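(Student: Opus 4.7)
The plan is to prove the triangular decomposition in two stages: first that the multiplication map $m \colon U_q^- \otimes U_q^0 \otimes U_q^+ \to U_q(\gq(n))$ is surjective, then that it is injective.

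For surjectivity I would implement a straightening algorithm that, starting from an arbitrary word in the generators, pushes every $f_i, f_{\ol i}$ to the left and every $e_i, e_{\ol i}$ to the right modulo the defining relations. The mechanics are routine: each $q^h$ quasi-commutes with the root vectors (picking up a factor $q^{\pm \alpha_i(h)}$) and commutes with $k_{\ol j}$; each commutator $e_i f_j - f_j e_i$, $e_i f_{\ol j} - f_{\ol j} e_i$, $e_{\ol i} f_j - f_j e_{\ol i}$, along with the corresponding odd-odd version, lies wholly inside $U_q^0$; and each swap of $k_{\ol i}$ past an $e_j$ or $f_j$ produces at most a single additional term involving $e_{\ol j}$ or $f_{\ol j}$, which remains inside $U_q^+\cdot U_q^0$ or $U_q^0 \cdot U_q^-$. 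A primary induction on total word length, with a secondary induction on the number of inversions relative to the ordering $f < h < k_{\ol j} < e$, then yields surjectivity.

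For injectivity, which is the delicate part, I would pass to the classical limit. Set $\A = \C[[q]]$ and let $U_\A$ be the $\A$-subalgebra of $U_q(\gq(n))$ generated by the listed generators (with $q$-divided powers as needed). The goal is to show that $U_\A$ is a topologically free $\A$-module whose reduction at $q=1$ recovers $U(\gq(n))$ in a way compatible with the triangular factorization. Once this is in hand, the classical PBW theorem of Musson--Mitchell \cite{MM} supplies the decomposition $U(\gq(n)) \cong U^- \otimes U^0 \otimes U^+$ at the level of $\C$-vector spaces, and a Nakayama-type argument transports it back to $U_\A$ and thence to $U_q(\gq(n)) \cong U_\A \otimes_\A \F$. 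An alternative route is Bergman's diamond lemma in the super setting: fixing a monomial order with $f < q^h < k_{\ol j} < e$, one views each defining relation as a reduction rule and checks that every overlap ambiguity resolves, so that the irreducible monomials form a basis compatible with the triangular factorization.

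The main obstacle lies in the injectivity step, and specifically in the tangled interaction of the odd Cartan relations. The identities $k_{\ol i}^2 = (q^{2k_i} - q^{-2k_i})/(q^2 - q^{-2})$ and $k_{\ol i} k_{\ol j} + k_{\ol j} k_{\ol i} = 0$ for $i \neq j$, together with the mixed relations $k_{\ol i} e_i - q e_i k_{\ol i} = e_{\ol i} q^{-k_i}$ and $k_{\ol i} f_i - q f_i k_{\ol i} = -f_{\ol i} q^{k_i}$, tie the odd Cartan generators directly to the root vectors. Whether one pursues flatness of the $q$-deformation or confluence of the rewriting system, one must verify a substantial list of super Jacobi-type consistency identities with no counterpart in the usual theory of $U_q(\mathfrak{gl}(n))$; this is where the essential content of the theorem resides.
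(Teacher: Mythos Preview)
Your surjectivity argument via straightening is correct and standard. For injectivity, however, the paper takes a different route: it relies on the comultiplication $\Delta$ of $U_q(\gq(n))$, following the template of \cite[Theorem 3.1.5]{HK2002}. Roughly, the Hopf structure lets one separate the $U_q^-$, $U_q^0$, $U_q^+$ contributions to an element, so that a dependence among triangularly ordered products forces each factor to vanish. This bypasses both the classical limit and any diamond-lemma computation.

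Your classical-limit proposal has a concrete gap in this setting. With $\A = \C[[q]]$ as in the paper, the element $q-1$ is a \emph{unit} (its inverse is $-\sum_{n\ge 0} q^n$), so there is no nontrivial ``reduction at $q=1$'' over $\A$ and hence no Nakayama argument. One must instead work over a ring such as the paper's $\Ao$, the local ring at $q=1$; but in the paper's logical flow the classical limit theorem (that $U_1 \cong U(\gq(n))$) is established only \emph{after} the triangular decomposition, so invoking it here would be circular. Even over the correct ring, showing that the integral form is topologically free is essentially the PBW statement you are trying to prove, and your sketch does not say how to break that circularity short of reverting to a confluence check. (Incidentally, \cite{MM} is Milnor--Moore, not Musson--Mitchell.)

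Your diamond-lemma alternative is a genuine and in principle valid approach, distinct from the paper's. It trades the Hopf-algebraic input for a direct resolution of overlap ambiguities; the price, as you correctly flag, is the long list of overlaps generated by the odd Cartan relations $k_{\ol i} e_i - q e_i k_{\ol i} = e_{\ol i} q^{-k_i}$ and their companions. The comultiplication proof is shorter and more conceptual; the diamond lemma is more elementary but considerably more laborious.
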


\begin{proof} The proof is based on the comultiplication
\eqref{eq:comultiplication}, and follows the outline given in
\cite[Theorem 3.1.5]{HK2002}.
\end{proof}

\vskip 3mm

\section{Representation Theory of $U_q(\gq(n))$}

Let us recall the highest weight  representation theory of $\Uq$
that was introduced in \cite{GJKK}.

\begin{definition} \hfill
\begin{enumerate}[{\rm (1)}]

\item A $U_q(\gq(n))$-module $M$ is a {\em weight module} if
it admits a weight space decomposition
$$    M= \bigoplus_{\mu \in P} M_{\mu},
       ~\text{where}~ M_{\mu} =\{ m \in M ~|~ q^{h}  m = q^{\mu(h)}m \ \text{ for all} \ h \in P^{\vee}\}. $$
\item A weight module $V$ is a {\em highest weight
module with highest weight $\lambda \in P$} if $V_{\la}$ is
finite-dimensional and satisfies the following conditions:
\begin{enumerate}[{\rm (i)}]
\item $V$ is generated by $V_{\la}$,
\item $e_i v = e_{\ol i} v =0$ for all $v \in V_{\la}$,
$i=1, \ldots, n-1$.
\end{enumerate}
\end{enumerate}
\end{definition}

For a weight $U_q(\qn)$-module $V$, we denote by $\wt(V)$ the set
of $\mu \in P$ such that $V_{\mu} \neq 0$. If $\dim_{\C((q))}
V_{\mu} < \infty$ for all $\mu \in P$, the {\em character} of $V$
is defined to be
  $$\ch V=\sum_{\mu \in P} (\dim_{\C((q))} V_{\mu})e^{\mu},  $$
where $e^{\mu}$ are formal basis elements of the group algebra
$\C[P]$ with the multiplication $e^{\la}e^{\mu}=e^{\la+\mu}$ for
all $\la, \mu \in P$.

As in the case of $\qn$, the Clifford superalgebra plays a central
role in the highest weight representation theory of $\Uq$. When
$m$ is a non-negative integer, the $q$-integer
$\dfrac{q^{2m}-q^{-2m}}{q^2-q^{-2}}$  has a square root in
$\C((q))$ but not in $\C(q)$. This difference gives the following
two statements, which is simpler than the corresponding statements
in \cite[Theorem 5.14]{GJKK}.

\begin{proposition} \label{prop:quantum
Clifford} For $\la \in P$, let ${\rm Cliff} _q(\la)$ be the
associative superalgebra over $\C((q))$ generated by odd
generators $\{t_{\ol i} \ | \ i=1,2,\ldots, n \}$ with the
defining relations
$$
\begin{array}{cc}
t_{\ol i} t_{\ol j} + t_{\ol j} t_{\ol i} = \delta_{ij}
\dfrac{2(q^{2\la_i}-q^{-2\la_i})}{q^2-q^{-2}}, & i,j =
1,2,\ldots,n.
\end{array}
$$
Then ${\rm Cliff} _q(\la)$ 
has up to isomorphism \begin{enumerate}[{\rm (1)}]
\item two irreducible modules $E^q(\la)$ and $\Pi(E^q(\la))$ of dimension $2^{k-1} | 2^{k-1}$ if $m=2k$,
\item one irreducible module $E^q(\la) \cong \Pi(E^q(\la))$ of dimension $2^k | 2^k$ if $m=2k+1$,
\end{enumerate}
where $m$ is the number of non-zero parts of $\la \in P$.
\end{proposition}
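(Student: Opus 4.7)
The plan is to reduce the classification to the classical Atiyah--Bott--Shapiro result recalled in Proposition \ref{prop:ABS} by an appropriate rescaling of the odd generators $t_{\ol i}$ over the base field $\F = \C((q))$. The essential point, which is precisely what fails over $\C(q)$, is that whenever $\la_i \neq 0$ the scalar
\[
c_i^2 \seteq \dfrac{q^{2\la_i}-q^{-2\la_i}}{q^2-q^{-2}}
\]
admits a square root in $\F$. For $\la_i \geq 1$ one rewrites the right-hand side as $q^{2-2\la_i}(1+q^4+q^8+\cdots+q^{4\la_i-4})$, which begins at an even power of $q$ with leading coefficient $1$; the case $\la_i < 0$ differs only by an overall sign, and $\sqrt{-1}\in\C\subset\F$ disposes of that.

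First I would eliminate the degenerate directions. If $\la_i=0$ then $t_{\ol i}^2=0$ and $t_{\ol i}$ anticommutes with every other generator, so the two-sided ideal it generates squares to zero and hence acts trivially on any irreducible supermodule. This reduces the problem to the case where all surviving $\la_i$ are non-zero; let $m$ denote the number of such indices. Choosing square roots $c_i\in\F$ and setting $\tau_i \seteq t_{\ol i}/c_i$ transforms the remaining defining relations into $\tau_i\tau_j+\tau_j\tau_i=2\delta_{ij}$. Since each $c_i$ is central and even, the rescaling preserves the $\Z_2$-grading, and $\Clq(\la)$ acts on irreducible modules through a quotient isomorphic, as an $\F$-superalgebra, to the standard Clifford superalgebra $\Cl_m$ on $m$ odd generators over $\F$.

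Because $\F$ contains $\sqrt{-1}$, the classification of finite-dimensional irreducible supermodules over $\Cl_m$ is entirely parallel to the one over $\C$ recorded in Proposition \ref{prop:ABS}: for $m=2k$ there are two non-isomorphic irreducibles of dimension $2^{k-1}|2^{k-1}$ interchanged by the parity shift, while for $m=2k+1$ there is a single irreducible of dimension $2^k|2^k$ isomorphic to its parity reversal. Transporting these modules back through the rescaling isomorphism produces $E^q(\la)$ and $\Pi(E^q(\la))$ with the claimed dimensions. The main subtlety to watch for is the square-root step; once it is in hand, the remainder of the argument is bookkeeping, and the proof is strictly simpler than that of the corresponding \cite[Theorem 5.14]{GJKK} over $\C(q)$, where no such square root is available and additional case analysis is required.
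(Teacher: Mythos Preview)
Your proposal is correct and follows precisely the approach the paper itself signals just before the statement: the key observation is that $\dfrac{q^{2\la_i}-q^{-2\la_i}}{q^2-q^{-2}}$ has a square root in $\C((q))$ (though not in $\C(q)$), after which a rescaling of the odd generators reduces the problem to the classical Clifford algebra classification of Proposition~\ref{prop:ABS}. The paper does not spell out the details beyond this hint, but your handling of the degenerate directions and the parity considerations fills in exactly what is needed, and your closing remark that the argument is strictly simpler than \cite[Theorem 5.14]{GJKK} over $\C(q)$ matches the paper's own comment.
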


Let $U_q^{\geq 0}$ be the subalgebra of $U_q(\qn)$ generated by
$e_i, e_{\ib}$ ($i=1, \ldots, n-1$) and $q^h, k_{\ol j }$ $(h \in
P^{\vee}, j=1, \ldots, n)$.
In \cite{GJKK}, we proved the following proposition, which is a
quantum analogue of Proposition \ref{prop:penkov}.

\begin{proposition} \label{prop:penkov2} \cite[Proposition 4.1]{GJKK}
Let ${\bf v}^q$ be a finite-dimensional irreducible $U_q^{\geq 0
}$-module with a weight space decomposition.
\begin{enumerate}[{\rm (1)}]
\item The subalgebra $U_q^+$ of $U_q^{\geq 0}$ acts on ${\bf v}^q$ trivially.
\item There exists a unique weight $\lambda \in P$ such that ${\bf v}^q$
admits a ${\rm Cliff}_q(\lambda)$-module structure.
\item For all $h \in P^{\vee}, v \in {\bf
v}^q$, we have $q^h v =q^{\la(h)}v$.
\end{enumerate}
\end{proposition}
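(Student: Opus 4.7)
The plan is to adapt the classical argument of Proposition \ref{prop:penkov} to the quantum setting, replacing bracket computations in $\gb_+$ by the defining relations of $U_q^{\ge 0}$.

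For part (1), I would set
$$
N = \{\, v \in {\bf v}^q : e_i v = e_{\ol i} v = 0 \text{ for all } i = 1,\ldots,n-1 \,\}.
$$
Since ${\bf v}^q$ is finite-dimensional, weighted, and the generators $e_i, e_{\ol i}$ raise the weight by $\al_i$, any weight space of maximal weight (with respect to the partial order generated by the positive simple roots) lies in $N$; in particular $N \neq 0$. The remaining content of (1) is to show that $N$ is stable under $U_q^0 = \langle q^h, k_{\ol j}\rangle$; combined with the trivial stability under $U_q^+$, this makes $N$ a nonzero $U_q^{\ge 0}$-submodule, so irreducibility of ${\bf v}^q$ forces $N = {\bf v}^q$. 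Stability under $q^h$ is immediate from $q^h e_i = q^{-\al_i(h)} e_i q^h$ and its $e_{\ol i}$-analogue. Stability under $k_{\ol j}$ is the main computation: using the defining relations
$$
k_{\ol i} e_i = q e_i k_{\ol i} + e_{\ol i} q^{-k_i}, \qquad q k_{\ol i} e_{i-1} - e_{i-1} k_{\ol i} = -q^{-k_i} e_{\overline{i-1}},
$$
together with $[k_{\ol i}, e_j] = 0$ for $j \neq i, i-1$, one solves each relation for $e_j k_{\ol i}$ and evaluates on a weight vector $v \in N$; at that point $q^{-k_i}$ acts by a scalar and every right-hand side vanishes by the defining property of $N$. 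The derived relations between $k_{\ol j}$ and the odd generators $e_{\ol i}$ (obtainable from the ones above) are handled in the same way.

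For parts (2) and (3), once (1) is established, the action of $U_q^{\ge 0}$ factors through $U_q^0$, since the decomposition $U_q^{\ge 0} = U_q^0 \cdot U_q^+$ (produced by moving every $q^h$ and $k_{\ol j}$ to the left of all $e_i, e_{\ol i}$ via the same commutation relations) shows that any element with a nonconstant $U_q^+$-factor annihilates ${\bf v}^q$. Because $k_{\ol j}$ commutes with every $q^h$, each weight space ${\bf v}^q_\mu$ is a $U_q^0$-submodule of ${\bf v}^q$, so irreducibility of ${\bf v}^q$ as a $U_q^0$-module forces the existence of a single weight $\la \in P$ with ${\bf v}^q = {\bf v}^q_\la$. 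This yields (3): $q^h$ acts on ${\bf v}^q$ by the scalar $q^{\la(h)}$. Substituting $\la(k_i)$ for $k_i$ in the defining relations
$$
k_{\ol i}^2 = \frac{q^{2k_i}-q^{-2k_i}}{q^2-q^{-2}}, \qquad k_{\ol i} k_{\ol j} + k_{\ol j} k_{\ol i} = 0 \ (i\neq j),
$$
shows that the operators $k_{\ol i}$ on ${\bf v}^q$ satisfy exactly the defining relations of ${\rm Cliff}_q(\la)$, proving (2); uniqueness of $\la$ is automatic since ${\bf v}^q$ carries only one weight.

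The main obstacle I anticipate is the stability of $N$ under the odd Cartan generators $k_{\ol j}$: the quantum intertwining relations between $k_{\ol j}$ and the positive Chevalley generators carry correction terms such as $e_{\ol i} q^{-k_i}$ and $q^{-k_i} e_{\overline{i-1}}$ that are genuinely more intricate than their classical bracket counterparts, and whose vanishing on $N$ relies both on $q^{-k_i}$ acting as a nonzero scalar on each weight space and on $N$ being simultaneously annihilated by all odd Chevalley generators $e_{\ol i}$. This interlocking is precisely where the non-abelian structure of the Cartan subsuperalgebra of $\qn$ leaves its mark on the quantum argument.
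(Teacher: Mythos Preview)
The paper does not actually prove this proposition; being an expository survey, it simply cites \cite[Proposition~4.1]{GJKK} and moves on. Your argument is the natural quantum adaptation of the classical one for Proposition~\ref{prop:penkov} and is essentially correct, and it is in fact the line of reasoning used in \cite{GJKK}.

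One point deserves more care than you give it. You correctly identify that the crux of part~(1) is the $k_{\ol j}$--stability of $N$, and your computation showing $e_j(k_{\ol i}v)=0$ from the listed relations is fine. But the sentence ``the derived relations between $k_{\ol j}$ and the odd generators $e_{\ol i}$ (obtainable from the ones above) are handled in the same way'' hides a genuine calculation: no relation of the form $k_{\ol j}e_{\ol i}\pm e_{\ol i}k_{\ol j}=\cdots$ appears among the defining relations \eqref{eq:defining relations for Uqqn}, so you must manufacture one. Writing $e_{\ol i}=(k_{\ol i}e_i-qe_ik_{\ol i})q^{k_i}$ and using $k_{\ol i}^2=\dfrac{q^{2k_i}-q^{-2k_i}}{q^2-q^{-2}}$ together with the anticommutation $k_{\ol i}k_{\ol j}+k_{\ol j}k_{\ol i}=0$ for $i\neq j$, one finds for instance
\[
k_{\ol i}e_{\ol i}+q\,e_{\ol i}k_{\ol i}=e_i\,q^{-k_i},\qquad
k_{\ol j}e_{\ol i}+e_{\ol i}k_{\ol j}=0\ \ (j\neq i,i+1),
\]
and an analogous identity for $j=i+1$; each right-hand side lies in $U_q^+\cdot U_q^0$ and hence kills any weight vector of $N$. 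With these identities in hand your argument closes. The attempt to deduce $e_{\ol j}(k_{\ol i}v)=0$ by simply substituting $e_{\ol j}=(k_{\ol j}e_j-qe_jk_{\ol j})q^{k_j}$ and invoking $e_j(k_{\ol i}v)=0$ is circular for $i\neq j$, since it reduces to knowing $k_{\ol j}v\in N$, which is what you are proving; the derived anticommutation relations above are what break that circle.

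Parts (2) and (3) are straightforward once (1) is in place, and your treatment of them is correct.
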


Combining Proposition \ref{prop:quantum Clifford} and Proposition
\ref{prop:penkov2}, we obtain a complete classification of
finite-dimensional irreducible weight $U_q^{\ge 0}$-modules. We
define
$$ W^q(\la) := U_q (\qn) \otimes_{U_q^{\geq
0}} E^q(\la) $$ to be the {\em Weyl module} of $U_q(\qn)$
corresponding to $\la$ (defined up to $\Pi$).

\vskip 3mm

 \begin{proposition} \cite[Proposition 4.2]{GJKK} \label{weyl}
  \begin{enumerate} [{\rm(1)}]

    \item $W^q(\la)$ is a free $U_q^-$-module of rank $\dim E^q(\la)$.

    \item Let $V$ be a highest weight $U_q(\qn)$-module with highest
    weight $\la$ such that $V_\la$ is an irreducible $U_q^{\ge 0}$-module.
    Then $V$ is a homomorphic image of $W^q(\la)$.

    \item Every Weyl module $W^q(\la)$ has a unique maximal submodule $N^q(\la)$.
  \end{enumerate}
 \end{proposition}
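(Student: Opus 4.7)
The plan is to establish each part in turn by combining the triangular decomposition of $\Uq$ with the classification results in Propositions~\ref{prop:quantum Clifford} and~\ref{prop:penkov2}.

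For part (1), I would first verify the analogous triangular decomposition $U_q^{\ge 0}\cong U_q^0\otimes_\F U_q^+$ for the positive Borel subalgebra, by the same PBW-type argument sketched after the preceding proposition. This lets the multiplication map upgrade the bare $\F$-linear isomorphism $\Uq\cong U_q^-\otimes U_q^0\otimes U_q^+$ to a right $U_q^{\ge 0}$-module isomorphism $\Uq\cong U_q^-\otimes_\F U_q^{\ge 0}$. Tensoring with $E^q(\la)$ over $U_q^{\ge 0}$ then yields
\[ W^q(\la)=\Uq\otimes_{U_q^{\ge 0}}E^q(\la)\;\cong\; U_q^-\otimes_\F E^q(\la) \]
as left $U_q^-$-modules, which is manifestly free of rank $\dim_\F E^q(\la)$.

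For part (2), I would invoke Frobenius reciprocity. By Proposition~\ref{prop:penkov2}, $V_\la$ is an irreducible $U_q^{\ge 0}$-module on which $U_q^+$ acts trivially and on which $U_q^0$ acts through a $\Clq(\la)$-module structure; Proposition~\ref{prop:quantum Clifford} then identifies $V_\la$, up to $\Pi$, with $E^q(\la)$. Fixing such an isomorphism $\iota\colon E^q(\la)\isoto V_\la$ and composing with the inclusion $V_\la\hookrightarrow V$ produces a $U_q^{\ge 0}$-linear map $E^q(\la)\to V$, which extends uniquely by the induction-restriction adjunction to a $\Uq$-linear map $\varphi\colon W^q(\la)\to V$. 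Its image is a $\Uq$-submodule containing $V_\la$, and hence equals $V$ since $V$ is generated by $V_\la$.

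For part (3), the strategy is the standard highest-weight argument. Any proper $\Uq$-submodule $N\subset W^q(\la)$ has $N_\la$ stable under $U_q^{\ge 0}$: the raising generators $e_i,e_{\ol i}$ carry $N_\la$ into $N_{\la+\al_i}=0$, while $q^h$ and $k_{\ol j}$ preserve weight. Hence $N_\la$ is a $U_q^{\ge 0}$-submodule of $W^q(\la)_\la\cong E^q(\la)$; by irreducibility $N_\la$ is either $0$ or all of $W^q(\la)_\la$, and the latter forces $N$ to contain the generators of $W^q(\la)$, contradicting properness. Therefore the sum $N^q(\la)$ of all proper submodules still meets the highest weight space trivially, is itself proper, and is by construction the unique maximal submodule.

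The main obstacle in this program is the first step: promoting the bare $\F$-linear triangular decomposition to a right $U_q^{\ge 0}$-module decomposition of $\Uq$. One must check that a suitable PBW-type ordering produces a basis of the form (PBW basis of $U_q^-$) $\otimes$ (basis of $U_q^{\ge 0}$) that is compatible with right multiplication in the presence of the Clifford-type relations among the $k_{\ol j}$ and the mixed $e_i$/$e_{\ol i}$ Serre-like relations. Once this is in place, parts (2) and (3) reduce to formal arguments already familiar from the theory of Verma modules.
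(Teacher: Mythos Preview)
Your argument is correct and is the standard route to these facts. However, note that the present paper is an expository survey and explicitly states in the introduction that it does \emph{not} give proofs; Proposition~\ref{weyl} is simply quoted from \cite[Proposition~4.2]{GJKK} with no accompanying proof or sketch here, so there is nothing in this paper to compare your argument against.

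For what it is worth, the approach you outline---upgrading the triangular decomposition to a $(U_q^-,U_q^{\ge0})$-bimodule isomorphism $\Uq\cong U_q^-\otimes_\F U_q^{\ge0}$ for part~(1), Frobenius reciprocity for part~(2), and the observation that proper submodules miss the highest weight space for part~(3)---is exactly the argument one expects and is the one used in \cite{GJKK}. Your caveat about the ``main obstacle'' is slightly overstated: once the vector-space triangular decomposition is known, the multiplication map $U_q^-\otimes_\F U_q^{\ge0}\to\Uq$, $u\otimes v\mapsto uv$, is automatically a bimodule map by associativity; the only genuine input is that this map is a linear isomorphism, which follows from the full triangular decomposition together with the analogous decomposition $U_q^{\ge0}\cong U_q^0\otimes_\F U_q^+$.
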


By Proposition \ref{weyl}, we see that there exists a unique
irreducible highest weight module $V^q(\la):=W^q(\la)/N^q(\la)$
with highest weight $\la \in P$ up to $\Pi$.

\vskip 3mm

\begin{example}
Consider the $\F$-vector space  $$\V = \soplus_{j=1}^n \F v_{j}
\oplus \soplus_{j=1}^n \F v_{\ol j}$$ with the action of
$U_q(\mathfrak{q}(n))$ given as follows:
\begin{equation}
\begin{array}{llll} e_iv_j=\delta_{j,i+1}v_i, &e_iv_{\ol
j}=\delta_{j,i+1}v_{\ol i},
&f_iv_j=\delta_{j,i}v_{i+1},&f_iv_{\ol j}=\delta_{j,i}v_{\ol{i+1}}, \\[1ex]
 e_{\ol i}v_j=\delta_{j,i+1}v_{\ol{i}},&e_{\ol i}v_{\ol j}=\delta_{j,i+1}v_{i},&
f_{\ol i}v_j=\delta_{j,i}v_{\ol{i+1}},&f_{\ol i}v_{\ol j}=\delta_{j,i}v_{{i+1}}, \\[1ex]
q^h v_j=q^{\epsilon_j(h)} v_j, &q^h v_{\ol j}=q^{\epsilon_j(h)}
v_{\ol j}, &k_{\ol i}v_j=\delta_{j,i}v_{\ol j},&k_{\ol i}v_{\ol
j}=\delta_{j,i}v_{j}. \end{array}
\end{equation}
Then $V$ is a $U_q(\qn)$-module and called the {\em vector
representation} of $U_q(\qn)$. Note that $\V$ is an irreducible
highest weight module with highest weight $\epsilon_1$.
 \end{example}

\vskip 3mm

Let
$$\Ao := \{f/g \in \C((q)) \ | \ f, g \in \C[[q]], \ g(1) \neq 0 \}$$
and let $V^q$ be a highest weight $U_q(\gq(n))$-module generated
by a finite-dimensional irreducible $U_q^{\geq 0}$-module $
E^q(\lambda)$. We denote by $\Cliff_{\Ao}(\la)$ the
$\Ao$-subalgebra of $\Cliff_q(\la)$ generated by $t_{\ol 1},
\ldots, t_{\ol n}$ and let $E^{\Ao}(\lambda)$ be  the
$\Cliff_{\Ao}(\lambda)$-submodule of $ E^q(\lambda)$ generated by
a nonzero even element in $E^q(\lambda)_{\bar0}$. The {\em
$\Ao$-form $U_{\Ao} $ of $U_q(\gq(n))$} is the $\Ao$-subalgebra of
$U_q(\gq(n))$ generated by $e_i,e_{\bar i},f_i, f_{\bar i}, q^h,
k_{\bar j}$ and $\dfrac{q^h-1}{q-1}$ for $i=1, \ldots, n-1, j=1,
\ldots,n$ and $h \in P^{\vee}$. The {\em $\Ao$-form $V^{\Ao}$ of
$V^q$ } is defined to be the $U_{\Ao}$-submodule of $V^q$
generated by $E^{\Ao}(\lambda)$.

Let $\Jo$ be the unique maximal ideal of $\Ao$ generated by $q-1$.
Then there is a canonical isomorphism of fields
\begin{equation*}
\Ao/\Jo \stackrel{\sim}{\longrightarrow} \C \ \ \ \text{given by }
\frac{f(q)}{g(q)}+\Jo \map \frac{f(1)}{g(1)}.
\end{equation*}
We define the {\em classical limit $U_1$ of $\Uq$} to be
$$\C\tensor_{\Ao} U_{\Ao} \cong U_{\Ao}/\Jo U_{\Ao}.$$
Similarly, the {\em classical limit $V^1$ of $V^q$} is defined to
be
$$\C\tensor_{\Ao} V_{\Ao} \cong  V^{\Ao}/\Jo V^{\Ao}.$$
The following {\it classical limit theorem} was proved in
\cite[Section 5]{GJKK}.

\begin{theorem} \cite[Theorem 5.11--Theorem 5.16]{GJKK}
\begin{enumerate}[{\rm (1)}]

\item As $U(\gq(n))$-modules, the classical limit $V^1$ of $V^q$ is isomorphic to a highest
weight $U(\qn)$-module $V$ with highest weight $\la \in P$ such
that $V_{\la}$ is an irreducible ${\mathfrak b}^{+}$-module.

\item $ \ch V^q = \ch V^1$.

\item The highest weight $\Uq$-module $V^{q}(\lambda)$ is finite dimensional if and only if $\lambda \in
\Lambda^{+}$.

\item If $V^q =V^q(\la)$ for $\la \in \Lambda^+ \cap P^{\geq 0}$, then
$V^1$ is isomorphic to $V(\la)$ up to $\Pi$.

\item The classical limit $U_1$ of $\Uq$ is isomorphic to $U(\gq(n))$ as $\C$-superalgebras.

\end{enumerate}
\end{theorem}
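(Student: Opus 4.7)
The plan is to establish the algebra isomorphism (5) first, from which the module statements (1)--(4) follow by controlling the specialization of the $\Ao$-form $V^{\Ao}$ of $V^q$ at $q=1$.

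For (5), I would define a $\C$-algebra homomorphism $\phi\colon U(\gq(n)) \to U_1 = U_{\Ao}/\Jo U_{\Ao}$ sending each Chevalley generator to the class of the corresponding generator of $U_{\Ao}$, with $h\in P^\vee$ sent to the class of $(q^h-1)/(q-1)$. The defining relations of $U(\gq(n))$ are recovered from the specializations $\tfrac{q^{k_i-k_{i+1}}-q^{-k_i+k_{i+1}}}{q-q^{-1}}\to k_i-k_{i+1}$ (yielding $[e_i,f_i] = k_i - k_{i+1}$), $\tfrac{q^{2k_i}-q^{-2k_i}}{q^2-q^{-2}}\to k_i$ (yielding $k_{\ib}^2 = k_i$), and the fact that the quantum Serre coefficient $q+q^{-1}$ specializes to $2$; the remaining relations match their classical counterparts verbatim at $q=1$. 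Surjectivity of $\phi$ is immediate, and for injectivity I would use PBW bases of $U_{\Ao}$ compatible with the triangular decomposition so that the specialization has the correct $\C$-dimension in each weight component.

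For (1) and (2), I would first show that $V^{\Ao}$ is $\Ao$-free with weight-space ranks equal to $\dim_{\F} V^q_\mu$. Since $W^q(\la)$ is $U_q^-$-free of rank $\dim E^q(\la)$ by Proposition \ref{weyl}(1), and $E^{\Ao}(\la)$ is generated by a single even element, a PBW basis of $U_{\Ao}^-$ applied to an $\Ao$-basis of $E^{\Ao}(\la)$ yields an $\Ao$-basis of the Weyl lattice; passing to $V^q = W^q(\la)/N^q(\la)$ requires a Nakayama-type argument. Then $V^1 = V^{\Ao}/\Jo V^{\Ao}$ satisfies $\dim_\C V^1_\mu = \dim_{\F} V^q_\mu$, giving (2). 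Via the isomorphism of (5), $V^1$ is a $U(\gq(n))$-module whose highest weight space is the specialization of $E^{\Ao}(\la)$; since $\Cliff_{\Ao}(\la)/\Jo \Cliff_{\Ao}(\la) \cong \Cliff(\la)$ and the chosen $E^{\Ao}(\la)$ specializes to an irreducible $\Cliff(\la)$-module of matching rank, Proposition \ref{prop:penkov} identifies it as an irreducible $\gb_+$-module, proving (1).

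For (3), the character equality of (2) reduces finite-dimensionality of $V^q(\la)$ to that of $V^1$. For $\la\in \Lambda^+$, quantum Serre relations force $f_i^{\la(k_i-k_{i+1})+1}$ to annihilate $V^q(\la)_\la$, a quantum analogue of Proposition \ref{prop:GJKK}(4), whence $V^q(\la)$ is finite-dimensional. Conversely, if $V^q(\la)$ is finite-dimensional then so is $V^1$; since $V^1$ is a highest weight $U(\gq(n))$-module with highest weight $\la$ by (1), its irreducible quotient $V(\la)$ is finite-dimensional, and Theorem \ref{thm:P} forces $\la\in\Lambda^+$. For (4), assume $\la\in\Lambda^+\cap P^{\geq 0}$ and $V^q = V^q(\la)$ irreducible: any proper $U(\gq(n))$-submodule $M\subsetneq V^1$ lifts via saturation to a $U_{\Ao}$-submodule $\widetilde M\subsetneq V^{\Ao}$ with $\F\otimes_\Ao \widetilde M$ a proper $U_q(\gq(n))$-submodule of $V^q(\la)$, contradicting irreducibility. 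Hence $V^1$ is irreducible, and Proposition \ref{prop:GJKK}(3) together with (1) gives $V^1 \cong V(\la)$ up to $\Pi$.

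The principal obstacle is establishing that $V^{\Ao}$ is an $\Ao$-lattice of the expected weight ranks whose specialization is a well-defined highest weight module, and the analogous submodule-lifting argument needed in (4). In the even case (for $U_q(\gl(n))$) these are standard consequences of PBW bases and the noetherianity of $\Ao$; the queer-super novelty lies in tracking the Clifford-module structure on $E^{\Ao}(\la)$ through the specialization, which is managed by the deliberate choice of $E^{\Ao}(\la)$ as the $\Cliff_{\Ao}(\la)$-submodule generated by a single even vector so that its $\Ao$-rank matches $\dim_\C E(\la)$ and irreducibility is preserved on quotienting by $\Jo$.
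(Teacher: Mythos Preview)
Your overall outline reverses the paper's logical order: you propose to establish (5) first and then derive (1)--(4), whereas the paper proves (1), (2), (3), (4) in that sequence and deduces (5) last, and it explicitly stresses that ``the order of our assertions to be proved is important and is carefully arranged.'' The reason is precisely the step you flag lightly: injectivity of $\phi\colon U(\gq(n))\to U_1$. You invoke a PBW basis of $U_{\Ao}$ over $\Ao$, but no such basis has been established independently of the representation theory; the triangular decomposition in the paper is only over $\F$, and passing from ``$\Ao$-subalgebra generated by'' to ``free $\Ao$-module on PBW monomials'' is exactly the hard part. The standard route (as in \cite{HK2002}, which the paper follows) is to prove injectivity by exhibiting enough $U_1$-modules obtained as classical limits---i.e., by using (1)--(4) first. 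So your plan for (5) has a genuine gap unless you supply an independent $\Ao$-PBW theorem, which is not available here.

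For (1)--(4) your arguments are largely in the right spirit and, in fact, do not need the full strength of (5), only the easy surjection $U(\gq(n))\twoheadrightarrow U_1$ (this is what the paper means by ``direct calculation'' for (1)). Your treatment of (2) via freeness of $V^{\Ao}$ matches the paper's ``extension of scalars of free modules.'' For (4), however, your submodule-lifting argument is unnecessary: once (1)--(3) are in hand, $V^1$ is a finite-dimensional highest weight module with highest weight $\la\in\Lambda^+\cap P^{\ge0}$ whose top space is an irreducible $\gb_+$-module, and Proposition~\ref{prop:GJKK}(3) then gives $V^1\simeq V(\la)$ directly---no saturation or irreducibility-lifting is needed. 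Similarly, for the ``if'' direction of (3) there is no need for a quantum analogue of Proposition~\ref{prop:GJKK}(4): one checks $f_i^{\langle h_i,\la\rangle+1}$ kills the highest weight space of $V^q(\la)$, this relation survives to $V^1$, and then the classical Proposition~\ref{prop:GJKK}(4) combined with (2) gives finite-dimensionality.
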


\begin{proof}
The assertion (1) can be verified by a direct calculation and the
assertion (2) follows from a couple of standard facts on tensor
products, in particular, on the extension of scalars of free
modules.

Combining Theorem \ref{thm:P}, Proposition \ref{prop:GJKK}, the
assertion (1) and (2), we obtain the assertion (3). Proposition
\ref{prop:GJKK} and the assertion (2) yield the assertion (4). Now
the assertion (5) can be proved as in \cite[Theorem 5.16]{GJKK}.

We would like to emphasize that the order of our assertions to be
proved is important and is carefully arranged.
\end{proof}

\vskip 3mm

We now introduce the main object of our investigation -- the
$\Uq$-modules in {\it the category $\mathcal{O}_{int}^{\ge 0}$}.

\begin{definition} \label{def:O}
 The {\em category
$\mathcal{O}_{int}^{\ge 0}$} consists of finite-dimensional
$U_q(\gq(n))$-modules $M$ with a weight space decomposition
satisfying the following conditions:

{\rm (1)} $\wt(M) \subset P^{\ge 0}$,

{\rm (2)} if $\lan k_i,\mu\ran=0$ for $\mu\in P^{\ge0}$ and $i \in
\{1,\ldots, n\}$, then $k_{\ol i}$ acts trivially on $M_{\mu}$.
\end{definition}

The fundamental properties of the category $\mathcal{O}_{int}^{\ge
0}$ are summarized in the following {\it complete reducibility
theorem}.

\begin{theorem} \cite[Proposition 6.2, Theorem 6.5]{GJKK}
\begin{enumerate}[{\rm (1)}]
\item Every $U_q(\gq(n))$-module in $\mathcal{O}_{int}^{\ge 0}$ is
completely reducible.
\item Every irreducible $U_q(\gq(n))$-module in ${\mathcal O}_{int}^{\ge 0}$ has the form $V^q(\lambda)$ for some
$\lambda \in  \La^+ \cap P^{\geq 0}$.
\end{enumerate}
\end{theorem}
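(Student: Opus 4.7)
The plan is to prove (2) first by a highest-weight-space analysis, and then to deduce (1) by lifting the classical complete reducibility through the classical limit theorem.

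For (2), let $M$ be irreducible in $\mathcal{O}_{int}^{\ge 0}$. Since $M$ is finite dimensional with $\wt(M)\subset P^{\ge 0}$, I would pick a maximal weight $\lambda$. Because $e_i$ and $e_{\ol i}$ raise weight, $M_\lambda$ is annihilated by $U_q^+$, hence is a $U_q^{\ge 0}$-submodule, and irreducibility of $M$ forces $M_\lambda$ to be an irreducible $U_q^{\ge 0}$-module. Proposition~\ref{prop:penkov2} then identifies $M_\lambda$ with $E^q(\lambda)$ as a $\Clq(\lambda)$-module, so by Proposition~\ref{weyl}(2) $M$ is a quotient of $W^q(\lambda)$, and irreducibility upgrades this to $M\cong V^q(\lambda)$. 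The inclusion $\wt(M)\subset P^{\ge 0}$ gives $\lambda\in P^{\ge 0}$, while part (3) of the classical limit theorem forces $\lambda\in\La^+$ from the finite dimensionality of $V^q(\lambda)$.

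For (1), I would fix $M\in\mathcal{O}_{int}^{\ge 0}$ and induct on $\dim M$. Choose a maximal weight $\lambda$; exactly as in (2), the submodule $N\subseteq M$ generated by an irreducible $U_q^{\ge 0}$-submodule of $M_\lambda$ is a homomorphic image of $W^q(\lambda)$. To show $N\cong V^q(\lambda)$ and to split
\[
0\longrightarrow K\longrightarrow M\longrightarrow V^q(\lambda)\longrightarrow 0,
\]
I would pass to an $\Ao$-form $M^{\Ao}$ and specialise at $q=1$: the classical limit $M^1$ lies in the corresponding classical category $\mathcal{O}^{\ge 0}$, where complete reducibility is available (ultimately from Sergeev's semisimplicity of the tensor powers of the vector representation, as alluded to in the introduction), so $M^1\simeq\bigoplus_i V(\mu_i)$ with $\mu_i\in\La^+\cap P^{\ge 0}$. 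The character identity $\ch V^q(\mu)=\ch V(\mu)$ from part (2) of the classical limit theorem matches composition factors, and the classical projector $M^1\twoheadrightarrow V(\lambda)$ would then be lifted to a $U_{\Ao}$-linear projector $M^{\Ao}\twoheadrightarrow N^{\Ao}$ that base-changes over $\F$ to the desired quantum splitting.

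The main obstacle is this final lifting step. Concretely, one must show that $\Hom_{U_{\Ao}}(M^{\Ao},N^{\Ao})$ is a torsion-free $\Ao$-module of the expected rank, so that its specialisation at $q=1$ surjects onto $\Hom_{U(\gq(n))}(M^1,N^1)$; equivalently, one must exclude a non-trivial $q$-torsion class in $\mathrm{Ext}^1_{U_q(\gq(n))}(V^q(\lambda),V^q(\mu))$ that would vanish in the classical limit. This is the deformation-theoretic heart of the argument; once the flatness properties of the $\Ao$-forms from \cite{GJKK} are in hand, the induction closes and (1) follows, with the isotypic components being indexed exactly by the $\mu_i$ produced by (2).
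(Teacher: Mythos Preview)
Your argument for (2) is correct and is the standard one; the paper's approach is the same.

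For (1), your overall strategy---induction on $\dim M$ together with the classical limit---matches the paper's sketch, and you are right to flag the lifting step as the crux. But there are two genuine gaps.

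First, you propose to pass to an $\Ao$-form $M^{\Ao}$ of an \emph{arbitrary} $M\in\Oint$ and specialise at $q=1$. The $\Ao$-form in the paper (and in \cite{GJKK}) is only constructed for highest weight modules generated by an irreducible $U_q^{\ge 0}$-module; a general $M$ has no canonical $\Ao$-lattice, and without one your comparison of $\Hom$-spaces over $\Ao$ never gets started. The paper sidesteps this by applying the classical limit only to the highest weight \emph{submodule} $N$ (which does admit an $\Ao$-form), using parts (1), (2) of the classical limit theorem together with Proposition~\ref{prop:GJKK}(3) to conclude $N\cong V^q(\lambda)$. The splitting of $0\to N\to M\to M/N\to 0$ is then handled at the quantum level, not by lifting a classical projector.

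Second---and this is what you are missing---you never invoke condition (2) of Definition~\ref{def:O}, which the paper singles out as playing a crucial role. The point is that if $\langle k_i,\lambda\rangle=0$ then $k_{\ol i}^2$ acts by $\dfrac{q^{2\lambda_i}-q^{-2\lambda_i}}{q^2-q^{-2}}=0$ on $M_\lambda$, so $k_{\ol i}$ is nilpotent there; without condition (2) the $U_q^0$-action on $M_\lambda$ need not be semisimple, and indeed the category obtained by dropping (2) is \emph{not} semisimple (a two-dimensional weight-zero space with $k_{\ol 1}$ acting as a nonzero nilpotent gives a nonsplit self-extension of the trivial module). Condition (2) forces each such $k_{\ol i}$ to act by zero, so $M_\lambda$ is a module over the genuinely semisimple Clifford algebra in the remaining generators and hence decomposes as a direct sum of copies of $E^q(\lambda)$. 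This semisimplicity of the highest weight space is exactly the input that makes the inductive splitting go through; once you use it, the induction closes without needing the deformation-theoretic lifting you describe, and in particular without assuming classical complete reducibility of $\mathcal{O}^{\ge 0}$ (your appeal to Sergeev's result only covers tensor powers of the vector representation, not the whole category).
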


\begin{proof}
Our assertions follow from the classical limit theorem and the
induction argument on the dimension of $\Uq$-modules in the
category $\mathcal{O}_{int}^{\ge 0}$. The condition (2) of
Definition \ref{def:O} plays a crucial role in the proof.
\end{proof}

\vskip 3mm

In the following theorem, we give a decomposition of the tensor
product of the vector representation with a highest weight
$\Uq$-module.

\Th \cite[Theorem 4.1(e)]{GJKKK1}, \cite[Theorem 1.11]{GJKKK2} \label{th:decomposition} Let $M$ 
be a  highest weight $\Uq$-module in $\Oint$ with highest weight
$\lambda \in \Lambda^+ \cap P^{\geq 0}$. Then we have
$$\V \otimes M \simeq\soplus_{\stackrel{\la + \epsilon_j :}{ \text{strict partition}}}
M_ j,$$ where $M_j$ is a highest weight $\Uq$-module in the
category $\Oint$ with highest weight $\la + \epsilon_j$ and $\dim
(M_{j})_{\la + \epsilon_j} = 2 \dim M_{\la}$. \enth

\begin{proof}
We first prove that our assertion holds for finite-dimensional
highest weight modules over $\qn$ in the category $\mathcal
O^{\geq 0}$. Then, by the classical limit theorem, our assertion
holds also for finite-dimensional highest weight modules in the
category $\Oint$.
\end{proof}

\begin{corollary} \cite[Corollary 1.12]{GJKKK2} \label{cor:Vtens}
Any irreducible $\Uq$-module in $\Oint$ appears as a direct
summand of tensor products of the vector representation $\V$.
\end{corollary}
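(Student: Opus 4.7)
My plan is a strong induction on $|\lambda|=\lambda_1+\cdots+\lambda_r$, where $\lambda\in\Lambda^+\cap P^{\ge 0}$ parametrises the irreducible module $V^q(\lambda)$. By the complete reducibility theorem stated just above, every irreducible object of $\Oint$ has the form $V^q(\lambda)$ (up to $\Pi$) for such a strict partition $\lambda$, so it suffices to realise each $V^q(\lambda)$ as a direct summand of a suitable tensor power $\V^{\otimes N}$.

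For the base case $|\lambda|=0$, the module $V^q(0)$ is trivial and I take $N=0$. For the inductive step I write $\lambda=\lambda_1\epsilon_1+\cdots+\lambda_r\epsilon_r$ with $\lambda_1>\cdots>\lambda_r>0$ and set $\mu:=\lambda-\epsilon_r$. A direct check shows that $\mu$ is again a strict partition in $\Lambda^+\cap P^{\ge 0}$: if $\lambda_r\ge 2$, the inequalities $\lambda_{r-1}>\lambda_r>\lambda_r-1$ keep the sequence strictly decreasing, while if $\lambda_r=1$, the last part simply drops off. Since $|\mu|=|\lambda|-1$, the induction hypothesis yields a direct summand embedding $V^q(\mu)\hookrightarrow\V^{\otimes(|\lambda|-1)}$.

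I then apply Theorem \ref{th:decomposition} to $V^q(\mu)$, which produces
\[\V\otimes V^q(\mu)\simeq\soplus_{j}M_j\]
indexed by those $j\in\{1,\ldots,n\}$ for which $\mu+\epsilon_j$ is again a strict partition, each $M_j$ being a highest weight $\Uq$-module in $\Oint$ of highest weight $\mu+\epsilon_j$. Since $\mu+\epsilon_r=\lambda$ is strict, the summand $M_r$ occurs with $\dim(M_r)_\lambda=2\dim V^q(\mu)_\mu>0$. Applying complete reducibility to $M_r$ decomposes it into irreducibles $V^q(\nu)$, and since $\lambda$ is a maximal weight of $M_r$ appearing with positive multiplicity, at least one summand is isomorphic to $V^q(\lambda)$ (up to $\Pi$). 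Composing the inclusions
\[V^q(\lambda)\hookrightarrow M_r\hookrightarrow\V\otimes V^q(\mu)\hookrightarrow\V\otimes\V^{\otimes(|\lambda|-1)}=\V^{\otimes|\lambda|}\]
then exhibits $V^q(\lambda)$ as a direct summand of $\V^{\otimes|\lambda|}$, completing the induction.

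The only mildly delicate point is the construction of $\mu$: I need a strict partition $\mu$ with $\mu+\epsilon_j=\lambda$ for some $j$, and peeling off a box from the last row handles this uniformly. Everything else is a direct application of Theorem \ref{th:decomposition} together with complete reducibility, so I do not anticipate a serious obstacle.
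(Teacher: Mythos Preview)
Your proof is correct and is precisely the argument the paper has in mind: the corollary is stated without proof immediately after Theorem~\ref{th:decomposition}, and the intended derivation is the induction on $|\lambda|$ using that theorem together with complete reducibility, exactly as you carry it out. The only point worth recording explicitly is that each inclusion in your final chain is a split monomorphism (direct summand of a direct summand is a direct summand, and tensoring with $\V$ preserves direct sums), so the composite indeed exhibits $V^q(\lambda)$ as a direct summand of $\V^{\otimes|\lambda|}$.
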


\vskip 3mm

\section{Crystal Bases}

 Let $M$ be a $\Uq$-module in the category $\mathcal{O}_{int}^{\geq 0}$ and $I=\{ 1,2,
\ldots, n-1 \}$. For $i \in I$, we define the {\em even Kashiwara
operators} $\tilde{e_i}, \tilde{f_i}: M \ra M$ in the usual way.
That is, for $u \in M$, we write
 $$u=\sum_{k \geq 0} f_i^{(k)}u_k,$$ where $e_i u_k=0$ for
all $k \geq 0$ and $f_i^{(k)}=f_i^k/[k]!$, and we define
$$\tilde{e}_i u=\sum_{k \geq 1} f_i^{(k-1)}u_k, \
\tilde{f}_i u=\sum_{k \geq 0} f_i^{(k+1)}u_k.$$

On the other hand, we define the {\em odd Kashiwara operators} to
be
  \beq \bea
  &\tilde{k}_{\overline{1}} := q^{k_1-1}k_{\overline{1}},\\
  &\tilde{e}_{\overline{1}} := -(e_1k_{\ol{1}}-qk_{\ol 1}e_1)q^{k_1-1},\\
  &\tilde{f}_{\overline{1}} := -(k_{\ol{1}}f_1-qf_1 k_{\ol 1})q^{k_2-1}.
  \eea \eeq

Recall that an {\em abstract $\mathfrak{gl}(n)$-crystal} is a set
$B$ together with the maps $\tei, \tfi \colon B \to B \sqcup
\{0\}$, $\vphi_i, \varepsilon_i \colon B \to \Z \sqcup \{-\infty\}$
for $i \in I$, and $\wt\colon B \to P$ satisfying the following
conditions (see \cite{Kas93}):

\begin{enumerate}[{\rm (1)}]
\item $\wt(\tei b) = \wt (b) + \alpha_i$ if $i\in I$ and
$\tei b \neq 0$,

\item $\wt(\tfi b) = \wt (b) - \alpha_i$ if $i\in I$ and
$\tfi b \neq 0$,

\item for any $i \in I$ and $b\in B$, $\vphi_i(b) = \varepsilon_i(b) +
\langle h_i, \wt (b) \rangle$,

\item for any $i\in I$ and $b,b'\in B$,
$\tfi b = b'$ if and only if $b = \tei b'$,

\item for any $i \in I$ and $b\in B$
such that $\tei b \neq 0$, we have  $\varepsilon_i(\tei b) =
\varepsilon_i(b) - 1$, $\vphi_i(\tei b) = \vphi_i(b) + 1$,

\item for any $i \in I$ and $b\in B$ such that $\tfi b \neq 0$,
we have $\varepsilon_i(\tfi b) = \varepsilon_i(b) + 1$, $\vphi_i(\tfi b)
= \vphi_i(b) - 1$,

\item for any $i \in I$ and $b\in B$ such that $\vphi_i(b) = -\infty$, we
have $\tei b = \tfi b = 0$.
\end{enumerate}

In this paper, we say that an abstract $\mathfrak{gl}(n)$-crystal
is a {\em $\mathfrak{gl}(n)$-crystal} if it is realized as a
crystal basis of a finite-dimensional integrable
$U_q(\mathfrak{gl}(n))$-module. In particular, for any $b$ in a
$\mathfrak{gl}(n)$-crystal $B$, we have
$$\varepsilon_i(b)=\max\{n\in\Z_{\ge0}\,;\,\tei^nb\not=0\}, \quad
\vphi_i(b)=\max\{n\in\Z_{\ge0}\,;\,\tfi^nb\not=0\}.$$

\Def \label{def:crystal base} Let $M= \soplus_{\mu \in P^{\ge 0}}
M_{\mu}$ be a $U_q(\mathfrak{q}(n))$-module in the category
$\Oint$. A {\em crystal basis} of $M$ is a triple $(L, B,
l_{B}=(l_{b})_{b\in B})$, where \bna
\item $L$ is a free $\A$-submodule of $M$ such that

\bni
\item $\F \otimes_{\A} L \isoto M$,

\item $L = \soplus_{\mu \in P^{\ge 0}} L_{\mu}$, where $L_{\mu} = L
\cap M_{\mu}$,

\item  $L$ is stable under the Kashiwara operators $\tei$,
$\tfi$ $(i=1, \ldots, n-1)$, $\tkone$, $\teone$, $\tfone$.
\end{enumerate}

\item $B$ is a $\mathfrak{gl}(n)$-crystal together with
the maps $\teone, \tfone \colon B \to B \sqcup \{0\}$ such that

\bni
\item $\wt(\teone b) = \wt(b) + \alpha_1$, $\wt(\tfone b) = \wt(b) -
\alpha_1$,

\item for all $b, b' \in B$, $\tfone b = b'$ if and only if $b = \teone b'$.
\end{enumerate}

\item $l_{B}=(l_{b})_{b \in B}$ is a family of
non-zero subspaces of $L/qL$  such that

\bni
\item $l_{b} \subset (L/qL)_{\mu}$ for $b \in B_{\mu}$,

\item  $L/qL = \soplus_{b \in B} l_{b}$,

\item $\tkone l_{b} \subset l_{b}$,
\item for $i=1, \ldots, n-1, \ol 1$, we have
\begin{enumerate}[{\rm(1)}]
\item
if $\tei b=0$ then $\tei l_{b} =0$, and otherwise $\tei$ induces
an isomorphism $l_{b}\isoto l_{\tei b}$,
\item
if $\tfi b=0$ then $\tfi l_{b} =0$, and otherwise $\tfi$ induces
an isomorphism $l_{b}\isoto l_{\tfi b}$. \end{enumerate}
\end{enumerate}
\end{enumerate}

\edf

\begin{remark}  Note that an element $b \in B$ does {\it not}
correspond to a basis vector of $L / qL$. Instead, it corresponds
to a subspace $l_{b}$ of $L / qL$. In \cite[Proposition
2.3]{GJKKK2}, we proved that for any crystal basis $(L, B, l_B)$
of a $\Uq$-module $M \in \Oint$, we have $\teone^2 = \tfone^2 = 0$
as endomorphisms on $L/qL$.
\end{remark}

\begin{example}
Let $\V = \soplus_{j=1}^n \F v_{j} \oplus \soplus_{j=1}^n \F
v_{\ol j} $ be the vector representation of $U_q(\qn)$. Set
$$\mathbf{L} = \soplus_{j=1}^n \A v_{j} \oplus \soplus_{j=1}^n \A
v_{\ol j}\quad \text{and }l_{j} = \C v_{j} \oplus \C v_{\ol j}
\subset \mathbf{L}/ q \mathbf{L},$$ and let $\B$ be the
$\mathfrak{gl}(n)$-crystal with the $\bar 1$-arrow given below.

\begin{equation*}
 \B \ : \ \xymatrix@C=5ex {*+{\young(1)} \ar@<0.1ex>[r]^-{1}
\ar@{-->}@<-0.9ex>[r]_{\ol 1} & *+{\young(2)} \ar[r]^2 &
*+{\young(3)} \ar[r]^3 & \cdots \ar[r]^{n-1} & *+{\young(n)} }
\end{equation*}
Here, the actions of $\tfi$ $(i=1, \ldots, n-1, \ol 1)$ are
expressed by $i$-arrows. Then $(\mathbf{L}, \B,
l_{\B}=(l_j)_{j=1}^n)$ is a crystal basis of $\V$.
\end{example}

\begin{remark}
Let $M$ be a $\uqqn$-module in the category  $\Oint$ with a
crystal basis $(L,B,l_{B})$. For $i=1,\ldots,n-1,\ob$ and $b$,
$b'\in B$, if $b'=\tf_ib$, then we have isomorphisms $\tf_i \colon
l_b \isoto l_{b'}$ and $\te_i \colon l_{b'} \isoto l_b$. If
$i=1,\ldots,n-1$, then they are inverses to each other. However,
when $i=\ol 1$, they are not inverses to each other in general.
\end{remark}

The {\it queer tensor product rule} given in the following theorem
is one of the most important and interesting features of the
crystal basis theory of $\Uq$-modules.

\Th \cite[Theorem 3.3]{GJKKK1} \cite[Theorem 2.7]{GJKKK2}
\label{th2:tensor product}
 Let $M_j$ be a $\uqqn$-module in $\Oint$ with a crystal basis
$(L_j, B_j, l_{B_j})$ $(j=1,2)$. Set $B_1\otimes B_2 = B_1 \times
B_2$ and $l_{b_1\otimes b_2}=l_{b_1} \otimes l_{b_2}$ for $b_1\in
B_1$ and $b_2\in B_2$. Then $(L_1 \otimes_{\A} L_2, B_1 \otimes
B_2,(l_b)_{b\in B_1 \otimes B_2})$ is a crystal basis of $M_1
\otimes_{\F}M_2$, where the action of the Kashiwara operators on
$B_1\otimes B_2$ are given as follows:
\eq &&\begin{aligned} \tei(b_1 \otimes b_2) & = \begin{cases} \tei
b_1 \otimes b_2 \ &
\text{if} \ \vphi_i(b_1) \ge \eps_i(b_2), \\
b_1 \otimes \tei b_2 \ & \text{if} \ \vphi_i(b_1) < \eps_i(b_2),
\end{cases} \\
\tfi(b_1 \otimes b_2) & = \begin{cases} \tfi b_1 \otimes b_2 \
& \text{if} \  \vphi_i(b_1) > \eps_i(b_2), \\
b_1 \otimes \tfi b_2 \ & \text{if} \ \vphi_i(b_1) \le \eps_i(b_2),
\end{cases}
\end{aligned}\\[2ex]
 \label{eq1:tensor product}
&&\begin{aligned} \teone (b_1 \otimes b_2) & = \begin{cases}
\teone b_1 \otimes b_2
& \text{if} \ \lan k_1, \wt (b_2) \ran =   \lan k_2, \wt (b_2) \ran =0, \\
b_1 \otimes \teone b_2   
&  \text{otherwise,}
\end{cases} \\
\tfone(b_1 \otimes b_2) & = \begin{cases} \tfone b_1 \otimes b_2
& \text{if} \ \lan k_1, \wt (b_2) \ran = \lan k_2, \wt (b_2) \ran =0, \\
b_1 \otimes \tfone b_2   
& \text{otherwise}.
\end{cases}\\
\end{aligned}
\label{eq2:tensor product} \eneq
 \enth

\begin{proof}
For $i=1, 2, \ldots, n-1$, our assertions were already proved in
\cite{Kas90, Kas91}. For $i= {\ol 1}$, our assertions follow from
the following comultiplication formulas (see \cite{GJKKK2}):
\begin{equation*}
\begin{aligned}
& \Delta(\tkone) = \tkone \otimes q^{2 k_1} + 1 \otimes \tkone, \\
& \Delta(\teone) = \teone \otimes q^{k_1 + k_2} + 1 \otimes \teone
 - (1-q^2) \tkone \otimes e_1 q^{2 k_1}, \\
& \Delta(\tfone) = \tfone \otimes q^{k_1 + k_2} + 1 \otimes \tfone
 - (1-q^{2}) \tkone \otimes f_1 q^{k_1 + k_2-1}.
\end{aligned}
\end{equation*}
\end{proof}

\Def An {\em abstract $\mathfrak{q}(n)$-crystal} is a
$\mathfrak{gl}(n)$-crystal together with the maps $\teone,
\tfone\colon B \to B \sqcup \{0\}$ satisfying the following
conditions: \bna
\item $\wt(B)\subset P^{\ge0}$,
\item $\wt(\teone b) = \wt(b) + \alpha_1$, $\wt(\tfone b) = \wt(b) -
\alpha_1$,

\item for all $b, b' \in B$, $\tfone b = b'$ if and only if $b = \teone b'$,

\item 
if $3\le i\le n-1$, we have \begin{enumerate}[{\rm(i)}]
\item
the operators $\teone$ and $\tfone$ commute with $\te_i$ and
$\tf_i$ ,

\item if $\teone b\in B$, then
$\eps_i(\teone b)=\eps_i(b)$ and $\vphi_i(\teone b)=\vphi_i(b)$.
\end{enumerate}
\end{enumerate}
 \edf

Let $B_1$ and $B_2$ be abstract $\qn$-crystals. The {\em tensor
product} $B_1 \otimes B_2$ of $B_1$ and $B_2$ is defined to be the
$\mathfrak{gl}(n)$-crystal $B_1 \otimes B_2$ together with the
maps $\teone$, $\tfone$ defined by \eqref{eq2:tensor product}.
Then it is an abstract $\qn$-crystal.
\begin{remark}
Let $B_1, B_2$ and $B_3$ be abstract $\qn$-crystals. Then we have
$$(B_1 \otimes B_2) \otimes B_3 \simeq B_1\otimes (B_2 \otimes B_3).$$
\end{remark}

\begin{example} \hfill \label{ex_abstract_crystal}
\bna
\item If $(L, B, l_{B})$ is a crystal basis of a $\Uq$-module $M$ in the
category $\Oint$, then $B$ is an abstract $\qn$-crystal.

\item The crystal graph $\B$ of the vector representation $\V$
is an abstract $\qn$-crystal.

\item By the tensor product rule, $\B^{\otimes N}$ is an abstract
$\qn$-crystal. When $n=3$, the $\qn$-crystal structure of $\B
\otimes \B$ is given below.

$$\xymatrix
{*+{\young(1) \otimes \young(1)} \ar[r]^1 \ar@{-->}[d]^{\ol 1} &
 *+{\young(2) \otimes \young(1)} \ar@<-0.5ex>[d]_1 \ar@{-->}@<0.5ex>[d]^{\ol 1} \ar[r]^2&
 *+{\young(3) \otimes \young(1)} \ar@<-0.5ex>[d]_1 \ar@{-->}@<0.5ex>[d]^{\ol 1} \\
 *+{\young(1) \otimes \young(2)} \ar[d]^2 &
 *+{\young(2) \otimes \young(2)} \ar[r]_2 &
 *+{\young(3) \otimes \young(2)} \ar[d]^2 \\
 *+{\young(1) \otimes \young(3)} \ar@{-->}@<-0.5ex>[r]_{\ol 1} \ar@<0.5ex>[r]^{1} &
 *+{\young(2) \otimes \young(3)} &
 *+{\young(3) \otimes \young(3)}
 }$$

\item For a strict partition $\la = (\la_1 > \la_2 > \cdots > \la_r
>0)$, 
let $Y_{\la}$ be the skew Young diagram having $\la_1$ many boxes
in the principal diagonal, $\la_2$ many boxes in the second
diagonal, etc.
For example, if $\la=(7 > 6 > 4 > 2 > 0)$, then we have

$$Y_{\la} = \young(::::::\hfill,:::::\hfill\hfill,::::\hfill\hfill\hfill,:::\hfill\hfill\hfill\hfill,::\hfill\hfill\hfill\hfill,:\hfill\hfill\hfill,\hfill\hfill) \quad.$$

Let $\B(Y_{\la})$ be the set of all semistandard tableaux of shape
$Y_{\la}$ with entries from $1, 2, \ldots, n$. Then by an {\em
admissible reading} introduced in \cite{BKK}, $\B(Y_{\la})$ can be
embedded in $\B^{\otimes N}$, where $N=\la_1 + \cdots + \la_r$.
One can show that it is stable under the Kashiwara operators
$\tei,\tfi$ ($i=1, \cdots, n-1,\ol1$) and hence it becomes an
abstract $\qn$-crystal. Moreover, the $\qn$-crystal structure thus
obtained does not depend on the choice of admissible reading.

In Figure \ref{fi:B(Y_(3,1,0))}, we illustrate the crystal
$\B(Y_\la)$ for $n=3$ and $\la=(3>1>0)$. In Figure
\ref{fi:B(Y_(3,0,0))}, we present the crystal $\B(Y_{\mu})$ for
$n=3$ and $\mu=(3>0)$. Note that in general, $\B(Y_\la)$ is not
connected.
   \ena
\end{example}

\begin{figure}[h]
 $$\scalebox{.8}{\xymatrix@R=1pc@H=1pc{ & &  {\young(::1,:12,1)} \ar[dl]_1 \ar_2[d]  \ar^{\ol 1}@{-->}[dr] & & & \\
& {\young(::1,:22,1)} \ar@<-0.5ex>_1[dl] \ar@<1ex>^{\ol
1}@{-->}[dl] \ar_2[d] & {\young(::1,:13,1)} \ar_1[d]\ar^{\ol
1}@{-->}[dr] &
{\young(::1,:12,2)} \ar_2[d] & & \\
{\young(::1,:22,2)} \ar_2[d] & {\young(::1,:23,1)}\ar@<-0.5ex>_1
[dl] \ar@<1ex>^{\ol 1}@{-->}[dl] \ar_2[d] & {\young(::2,:13,1)}
\ar_1 [d] \ar^{\ol 1}@{-->}[dr] & {\young(::1,:13,2)} \ar_1 [d]
\ar_2[dr] & & {\young(::1,:12,3)} \ar@<-0.5ex>_1 [d]
\ar@<1ex>^{\ol 1}@{-->}[d] \\
 {\young(::1,:23,2)} \ar_2[d] & {\young(::1,:33,1)} \ar^{\ol 1}@{-->}[dl] \ar_1 [d] & {\young(::2,:23,1)} \ar@<-0.5ex>_1 [d]
\ar@<1ex>^{\ol 1}@{-->}[d] \ar_2[dl] & {\young(::2,:13,2)}
 \ar_2[d] & {\young(::1,:13,3)} \ar_1 [dl] \ar^{\ol 1}@{-->}[dr] & {\young(::1,:22,3)} \ar_2[d] \\
 {\young(::1,:33,2)} \ar_2[dr] & {\young(::2,:33,1)} \ar@<-0.5ex>@{-->}_{\ol 1}[dr] \ar@<1ex>^{1}[dr]  &{\young(::2,:23,2)} \ar_2[d] & {\young(::2,:13,3)} \ar@<-0.5ex>_1 [d] \ar@<1ex>^{\ol 1}@{-->}[d] & & {\young(::1,:23,3)} \\
   & {\young(::1,:33,3)} \ar@<-0.5ex>@{-->}_{\ol 1}[dr] \ar@<1ex>^{1}[dr]       &  {\young(::2,:33,2)} \ar_2[d] &   {\young(::2,:23,3)} &  &  \\
   & &{ \young(::2,:33,3)}&& &}}$$
\caption{${\mathbf B}(Y_\la)$ for $n=3$, $\la = (3>1>0)$.}
\label{fi:B(Y_(3,1,0))}
   \end{figure}

\begin{figure}[h]
 $$\scalebox{.8}{\xymatrix@R=1pc@H=1pc{ &  {\young(::1,:1,1)} \ar_1 [d] \ar^{\ol 1}@{-->}[dr] &&&  &&  \\
& {\young(::2,:1,1)} \ar_1[dl] \ar_2[d]  \ar^{\ol 1}@{-->}[dr]  & {\young(::1,:1,2)}   \ar_1[d] \ar_2[dr]  & &   & {\young(::1,:2,1)} \ar@<-0.5ex>_1[d] \ar@<1ex>^{\ol 1}@{-->}[d] \ar[dr]_2  &  \\
 {\young(::2,:2,1)} \ar@<-0.5ex>_1 [d] \ar@<1ex>^{\ol 1}@{-->}[d] \ar[dr]_2 & {\young(::3,:1,1)} \ar_1[d] \ar^{\ol 1}@{-->}[dr]   & {\young(::2,:1,2)} \ar_2[d]& {\young(::1,:1,3)} \ar[d]_1 \ar^{\ol 1}@{-->}[dr]& & {\young(::1,:2,2)} \ar[d]_2 &  {\young(::1,:3,1)} \ar_{\ol 1}@{-->}[dl] \ar[d]_1 \\
  {\young(::2,:2,2)}\ar[d]_2 & {\young(::3,:2,1)}  \ar@<-0.5ex>_1
[dl] \ar@<1ex>^{\ol 1}@{-->}[dl]  \ar[d]_2  & {\young(::3,:1,2)}  \ar[d]_2 & {\young(::2,:1,3)} \ar@<-0.5ex>_1 [d] \ar@<1ex>^{\ol 1}@{-->}[d] & {\young(::1,:2,3)}  & {\young(::1,:3,2)} \ar[d]_2 &  {\young(::2,:3,1)} \ar@<-0.5ex>_1[d] \ar@<1ex>^{\ol 1}@{-->}[d]  \\
{\young(::3,:2,2)} \ar[dr]_2 & {\young(::3,:3,1)} \ar@<-0.5ex>_1[d] \ar@<1ex>^{\ol 1}@{-->}[d]  & {\young(::3,:1,3)} \ar@<-0.5ex>_1 [d] \ar@<1ex>^{\ol 1}@{-->}[d] & {\young(::2,:2,3)}\ar[dl]^2 &  &{\young(::1,:3,3)} \ar@<-0.5ex>_1[d] \ar@<1ex>^{\ol 1}@{-->}[d] &  {\young(::2,:3,2)} \ar[dl]_2 \\
& {\young(::3,:3,2)}\ar[d]_2      & {\young(::3,:2,3)} & & &{\young(::2,:3,3)}  &\\
&{\young(::3,:3,3)} &&& &&}}$$
 \caption{${\mathbf B}(Y_\mu)$ for $n=3$, $\mu = (3>0)$.}
\label{fi:B(Y_(3,0,0))}
   \end{figure}
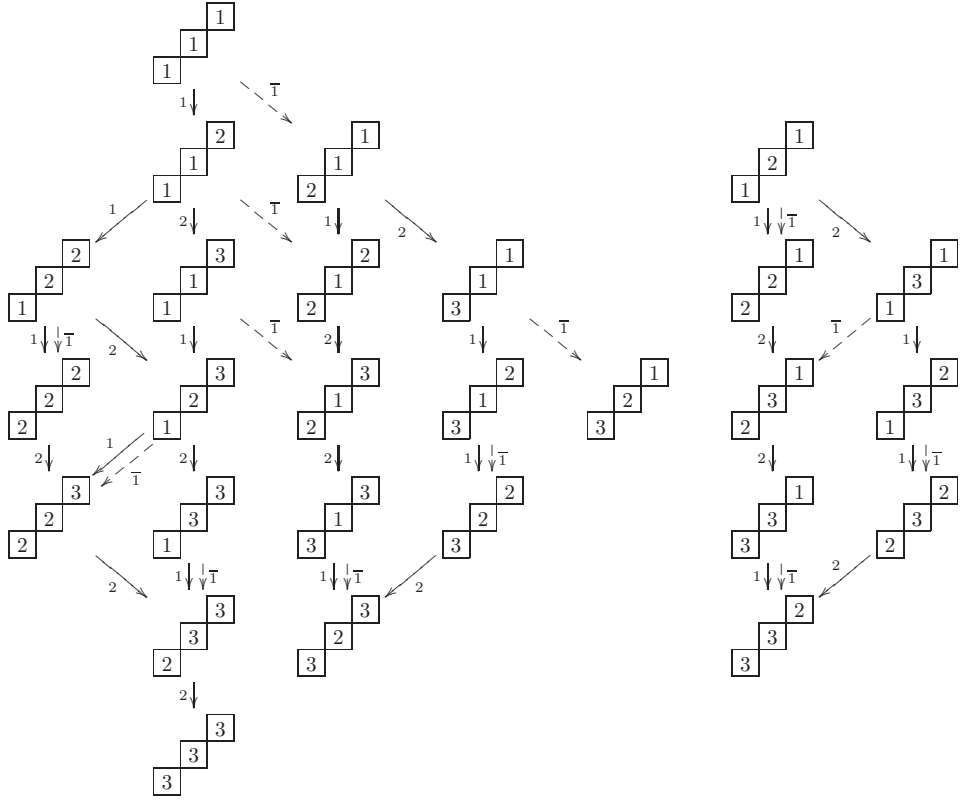

Let $B$ be an abstract $\qn$-crystal. For $i = 1,2,\ldots, n-1$,
we define the automorphism $S_i$ on $B$ by \eq &&S_i b =
\begin{cases}
\tfi^{\langle h_i, \wt b \rangle} b & \text{if} \quad {\langle h_i, \wt b \rangle} \geq 0, \\
\tei^{-\langle h_i, \wt b \rangle} b & \text{if} \quad {\langle
h_i, \wt b \rangle} \leq 0.
\end{cases}\label{def:Sicr}
\eneq Let $w$ be an element of the Weyl group $W$ of
$\mathfrak{gl}(n)$. Then, as shown in \cite{Kas94}, there exists a
unique action $S_{w} \colon B \to B$ of $W$ on $B$ such that
$S_{s_i}=S_i$ for $i=1,2,\ldots,n-1$. Note that $\wt(S_w
b)=w(\wt(b))$ for any $w \in W$ and $b \in B$.

For $i=1, \ldots, n-1$, we set \eq &&w_i = s_2 \cdots s_{i} s_1
\cdots s_{i-1}. \label{def:wi} \eneq Then $w_i$ is the shortest
element in $W$ such that $w_i(\alpha_i) = \alpha_1$. We define the
{\em odd Kashiwara operators} $\teibar$, $\tfibar$ $(i=2, \ldots,
n-1)$ by
$$\teibar = S_{w_i^{-1}} \teone
S_{w_i}, \ \ \tfibar = S_{w_i^{-1}} \tfone S_{w_i}.$$ We say that
an element $b \in B$ is a {\em highest weight vector} if $\tei b =
\teibar b =0$ for all $i=1, \ldots, n-1$, and an element $b \in B$
is a \emph{lowest weight vector} if $S_{w_0}b$ is a highest weight
vector, where $w_0$ is the longest element of $W$.



\vskip 3mm

In the following lemma, we give a combinatorial characterization
of highest weight vectors in $\B^{\tensor N}$, which plays a
crucial role in the proof of the main theorem. We expect this
lemma will have many important applications in the combinatorial
representation theory of $\Uq$.

\begin{lemma} \cite[Theorem 3.11]{GJKKK2} \label{le:char.h.w}
A vector $b_{0} \in \B^{\otimes N}$ is a highest weight vector if
and only if $b_{0} = 1 \otimes  \tf_1 \cdots \tf_{j-1} b$ for some
$j \in \{1, 2, \ldots, n \}$ and some highest weight vector $b \in
\B^{\otimes (N-1)}$ such that $\wt(b_0) = \wt(b)+\epsilon_j$ is a
strict partition.
\end{lemma}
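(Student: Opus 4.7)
The plan is to argue by induction on $N$. The base case $N = 1$ is immediate: the only highest weight vector in $\B$ is $\young(1)$, corresponding to $j = 1$ and $b$ the empty tensor. For $N \ge 2$, I would write $b_0 = c \otimes b'$ with $c \in \B$ and $b' \in \B^{\otimes (N-1)}$.

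For the forward direction, I would first show $c = \young(1)$. If $c = \young(m)$ with $m \ge 2$ then $\vphi_{m-1}(c) = 0$ and $\eps_{m-1}(c) = 1$, so the tensor product rule forces $\te_{m-1} b_0$ to equal either $\young(m-1) \otimes b'$ (when $\eps_{m-1}(b') = 0$) or $c \otimes \te_{m-1} b'$ (when $\eps_{m-1}(b') \ge 1$), both nonzero, contradicting the highest weight condition. Once $c = \young(1)$, applying the tensor product rule to each $\te_i$ forces $\eps_i(b') = 0$ for $i = 2, \ldots, n-1$ and $\eps_1(b') \le 1$. Next, I would define $j$ to be the smallest integer in $\{1, \ldots, n\}$ with $\te_j \te_{j-1} \cdots \te_1 b' = 0$, and set $b \seteq \te_{j-1} \cdots \te_1 b'$. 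The crystal axiom $\tf_i \te_i x = x$ when $\te_i x \ne 0$ then iteratively recovers $b' = \tf_1 \cdots \tf_{j-1} b$. The crucial step is verifying that $b$ is itself a highest weight vector: for $\te_i b = 0$ I would combine the minimality of $j$ with the commutations $\te_i \te_k = \te_k \te_i$ for $|i-k| \ge 2$ (valid in $\mathfrak{gl}(n)$-crystals coming from integrable modules) together with the constraints on $b'$; for $\teone b = 0$ and $\te_{\ib} b = 0$ with $i \ge 2$, I would use the tensor product rule for $\teone$ applied to $b_0 = \young(1) \otimes b'$, the Weyl group conjugation $\te_{\ib} = S_{w_i^{-1}} \teone S_{w_i}$, and the ambient constraint $\wt \subset P^{\ge 0}$. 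The strict partition condition on $\wt(b_0)$ follows automatically, since $b_0$ generates an irreducible summand of $\V^{\otimes N}$ in $\Oint$, whose highest weight must lie in $\qdiw$.

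For the backward direction, I would verify $\te_i b_0 = 0$ and $\te_{\ib} b_0 = 0$ directly. The even vanishings follow from the tensor product rule after computing $\eps_i(\tf_1 \cdots \tf_{j-1} b)$ via the highest weight property of $b$. For $\teone b_0 = 0$, the two branches of its tensor product rule are handled separately: when $\lan k_1, \wt(b') \ran = \lan k_2, \wt(b') \ran = 0$ I use $\teone \young(1) = 0$; otherwise I show $\teone(\tf_1 \cdots \tf_{j-1} b) = 0$ from $\teone b = 0$. The vanishings $\te_{\ib} b_0 = 0$ for $i \ge 2$ then reduce to $\teone$-annihilation questions on $S_{w_i} b_0$. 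The main obstacle throughout is the analysis of these odd operators $\te_{\ib}$ for $i \ge 2$: since they are defined as Weyl group conjugates of $\teone$ rather than by a direct tensor product rule, their vanishing must be extracted from the interaction of $S_{w_i}$ with the decomposition $b_0 = \young(1) \otimes b'$. It is precisely here that the strict partition hypothesis on $\wt(b_0)$, together with the positivity constraint $\wt \subset P^{\ge 0}$, becomes essential: these force the intermediate weights appearing under $S_{w_i}$ to satisfy the zero conditions required by the tensor product rule for $\teone$, so that the conjugated odd operators indeed annihilate $b_0$.
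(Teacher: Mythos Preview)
Your inductive framework---decomposing $b_0 = c \otimes b'$, forcing $c=\young(1)$ via the even tensor product rule, and then extracting $b$ from $b'$---is sound and matches the approach of the paper, which in this expository setting gives no details beyond pointing to \cite{GJKKK2} and remarking that the argument is ``a series of lemmas and lengthy (and careful) case-by-case check-ups.'' Two places in your sketch would not go through as written and are precisely where those check-ups are needed.

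First, the even part. Once you set $b=\te_{j-1}\cdots\te_1 b'$, the claim $\te_i b=0$ for $1\le i\le j-2$ does not follow from the commutations $\te_i\te_k=\te_k\te_i$ ($|i-k|\ge 2$) together with the constraints $\eps_1(b')\le 1$, $\eps_i(b')=0$ ($i\ge2$): for $i\le j-2$ one is immediately stuck at a word like $\te_i\te_{i+1}\te_i$, and there is no braid-type identity for Kashiwara operators. The correct statement here is the $\gl(n)$-crystal Pieri rule: the $\gl(n)$-highest weight vectors of $\B\otimes B_{\gl}(\mu)$ are exactly the $1\otimes\tf_1\cdots\tf_{j-1}b_\mu$. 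Citing that (or reproving it via the tableau model) gives $b=b_\mu$ directly.

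Second, and more seriously, the passage from $\teone b'=0$ (which is what the odd tensor product rule on $b_0$ yields when $N\ge2$) to $\teone b=0$ is not immediate: $\teone$ commutes with $\te_i$ only for $i\ge3$, so for $j\ge2$ you cannot simply push $\teone$ through $\te_{j-1}\cdots\te_1$. The same issue arises for each $\te_{\ib}=S_{w_i^{-1}}\teone S_{w_i}$: the conjugation by $S_{w_i}$ does not respect the tensor decomposition $1\otimes b'$ in any simple way, and you must analyze how $S_{w_i}$ moves $b_0$ inside $\B^{\otimes N}$ before invoking the $\teone$-rule. Establishing $\te_{\ib}b=0$ (forward direction) and $\te_{\ib}b_0=0$ (backward direction) therefore requires a genuine case analysis based on the shape of $\wt(b)$, the value of $j$, and the positivity constraints---this is the substance of the ``lengthy case-by-case check-ups'' the paper refers to, and your outline, while pointing at the right tools, does not yet supply it.
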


\begin{proof}
The proof consists of series of lemmas and lengthy (and careful)
case-by-case check-ups.
\end{proof}

\vskip 3mm

The existence and the uniqueness of crystal bases of $\Uq$-modules
in $\Oint$ is given in the following theorem.

\Th \cite[Theorem 4.1]{GJKKK1} \cite[Theorem 4.6]{GJKKK2}
\label{th:main theorem} \hfill \bna
\item  Let $M$ be an irreducible highest weight $\Uq$-module with highest weight
$\la \in \Lambda^+ \cap P^{\geq 0}$. Then there exists a crystal
basis $(L, B, l_{B})$ of $M$ such that

\bni

\item $B_{\la} = \{b_{\la} \}$,

\item $B$ is connected.

\end{enumerate}
Moreover, such a crystal basis is unique up to an automorphism of
$M$. In particular, $B$ depends only on $\la$ as an abstract
$\qn$-crystal and we write $B=B(\la)$.

\item The $\qn$-crystal $B(\la)$ has a unique highest weight vector
$b_{\la}$ and a unique lowest weight vector $l_{\la}$. 

\item A vector $b \in \B \otimes B(\la)$ is a highest weight vector
if and only if
$$b = 1 \otimes \tf_{1} \cdots \tf_{j-1} b_{\la}$$
for some $j$ such that $\la + \epsilon_j$ is a strict partition.

\item Let $M$ be a finite-dimensional highest weight $\Uq$-module
with highest weight
$\la \in \Lambda^+ \cap P^{\geq 0}$. 
Assume that $M$ has a crystal basis $(L,B(\la), l_{B(\la)})$ such
that $L_{\la} / q L_{\la} = l_{b_{\la}}$. Then we have

\bni

\item $\V \otimes M = \soplus_{\la + \epsilon_j : \text{strict}}M_ j,$
where $M_j$ is a highest weight $\Uq$-module with highest weight
$\la + \epsilon_j$ and $\dim (M_{j})_{\la + \epsilon_j} = 2\dim
M_{\la}$,

\item 
if we set $L_{j} = (\mathbf{L} \otimes L) \cap M_{j}$ and $B_j=\{
b \in \B \otimes B(\la) \ | \ l_b\subset L_j/qL_j \}$, then we
have $\B \otimes B(\la)=\coprod\limits_{\la+ \epsilon_j:
\text{strict}} B_{j}$ and $L_j/qL_j = \soplus_{b \in B_j} l_{b}$,
\item $M_j$ has a crystal basis $(L_j, B_j, l_{B_j})$,

\item $B_{j}\simeq B(\la + \epsilon_j)$ as an abstract $\qn$-crystal.

\end{enumerate}
\end{enumerate}
 \enth

\begin{proof}
All of these assertions are proved by a series of interlocking
inductive arguments (see \cite{GJKKK2}).
\end{proof}

\vskip 3mm

Our main theorem implies the following corollary.
\begin{corollary} \cite[Theorem 4.1(d)]{GJKKK1} \cite[Corollary 4.7]{GJKKK3} \hfill
\bna
\item Every $U_q(\mathfrak{q}(n))$-module in the category
$\Oint$ has a crystal basis.
\item If $M$ is a $\Uq$-module in the category
$\Oint$ and $(L,B,l_B)$ is a crystal basis of $M$, then there
exist decompositions $M=\soplus_{a\in A}M_a$ as a
$U_q(\mathfrak{q}(n))$-module, $L=\soplus_{a\in A}L_a$ as an
$\A$-module, $B=\coprod_{a\in A}B_a$ as a $\qn$-crystal,
parametrized by a set $A$ such that the following conditions are
satisfied for any $a\in A$ : \bni
\item
$M_a$ is a highest weight module with highest weight $\la_a$ and
$B_a \simeq B(\la_a)$ for some strict partition $\la_a$,
\item
$L_a=L\cap M_a$, $L_a/qL_a=\soplus_{b\in B_a}l_b$,
\item $(L_a,B_a,l_{B_a})$ is a crystal basis of $M_a$.
\end{enumerate}
\ena
\end{corollary}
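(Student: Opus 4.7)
For (a), I would proceed directly from the complete reducibility theorem and part (1) of Theorem \ref{th:main theorem}: writing $M=\soplus_{a\in A}M_a$ with each $M_a\simeq V^q(\la_a)$ for a strict partition $\la_a\in\Lambda^+\cap P^{\ge 0}$, the main theorem supplies a crystal basis $(L_a,B_a,l_{B_a})$ of each $M_a$ with $B_a\simeq B(\la_a)$, and the termwise direct sums $L=\soplus_a L_a$, $B=\coprod_a B_a$, $l_B=(l_b)_{b\in B}$ satisfy the axioms of Definition \ref{def:crystal base} for $M$ because each axiom is local to one summand.

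For (b), the plan is induction on $\dim M$, using the connected components of the $\qn$-crystal $B$ to guide a corresponding splitting of $M$. The base case where $M$ is irreducible is exactly part (1) of the main theorem. For the inductive step, I would first partition $B$ into connected components under the Kashiwara operators $\tei,\tfi,\teone,\tfone$, and select one such component $C_0$ together with a highest weight vector $b_0\in C_0$ of weight $\la_0$; by parts (1) and (2) of Theorem \ref{th:main theorem} (applied to the irreducible module eventually attached to $C_0$), $C_0\simeq B(\la_0)$ as an abstract $\qn$-crystal. Next I would lift a nonzero $v_0\in l_{b_0}$ to $\tilde v_0\in L_{\la_0}$ and, using the Clifford-module structure on $M_{\la_0}$ provided by Proposition \ref{prop:penkov2} together with the maximality of $\la_0$ in its component, argue that $\tilde v_0$ can be adjusted to a genuine highest weight vector of $M$ generating a submodule $N\simeq V^q(\la_0)$ whose crystal basis is $(L\cap N,\,C_0,\,(l_b)_{b\in C_0})$. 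Complete reducibility then yields $M=N\oplus M'$, and verifying that $(L\cap M',\,B\setminus C_0,\,(l_b)_{b\in B\setminus C_0})$ is a crystal basis of $M'$ allows the induction hypothesis to close the argument.

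The main obstacle is the compatibility step at the end of the inductive argument: I must show that $L$ splits as $(L\cap N)\oplus(L\cap M')$, and that $l_b\subset (L\cap N)/qL$ for every $b\in C_0$ while $l_b\subset (L\cap M')/qL$ for every $b\in B\setminus C_0$, so that the residual data on $M'$ genuinely forms a crystal basis. This relies on the connectedness of $C_0$, which ensures that the image of $L\cap N$ in $L/qL$ is generated by $l_{b_0}$ under successive applications of the even and odd Kashiwara operators, combined with the fact that these operators respect the direct sum $M=N\oplus M'$ modulo $qL$. Once this compatibility is established, the decomposition $L/qL=\soplus_{b\in B}l_b$ splits along the two summands and the induction proceeds without further difficulty.
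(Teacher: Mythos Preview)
The paper gives no proof beyond the sentence ``Our main theorem implies the following corollary,'' deferring to the cited references; there is no argument in this survey to match against in detail.  Your derivation of part~(1) from complete reducibility together with Theorem~\ref{th:main theorem}(1) is the natural one and is certainly what is intended.

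For part~(2), your inductive scheme is reasonable in outline, and you correctly flag the lattice-splitting compatibility $L=(L\cap N)\oplus(L\cap M')$ as the main technical point at the end.  There is, however, an earlier step that is more delicate than your sketch suggests.  In the queer setting the subspace $l_{b_0}\subset L/qL$ is not one-dimensional, and Definition~\ref{def:crystal base} only guarantees that $l_{b_0}$ is stable under $\tkone$, not under the full odd Cartan $k_{\ol 1},\ldots,k_{\ol n}$.  When you lift a single $v_0\in l_{b_0}$ to $\tilde v_0$ and set $N=U_q(\qn)\cdot\tilde v_0$, the highest-weight space $N_{\la_0}$ is the irreducible $\Cliff_q(\la_0)$-submodule of $M_{\la_0}$ generated by $\tilde v_0$; there is no a~priori reason its image in $L/qL$ coincides with $l_{b_0}$, nor even that $\dim l_{b_0}=\dim E^q(\la_0)$.  (You also want $\la_0$ maximal in $\wt(M)$ rather than merely in its crystal component, so that any lift is automatically killed by all $e_i$, $e_{\ol i}$.)  The needed control on the size and Clifford-module structure of each $l_b$ is established in \cite{GJKKK2} as part of the ``interlocking inductive arguments'' that prove Theorem~\ref{th:main theorem} simultaneously with this corollary; it does not follow from the axioms and the statements recorded in this survey alone.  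So your strategy is sound, but the step ``$(L\cap N,C_0,(l_b)_{b\in C_0})$ is a crystal basis of $N$'' requires input that lies outside what you have invoked.
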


\vskip3mm
\section{Semistandard decomposition tableaux}

As we have seen in Example \ref{ex_abstract_crystal} (4), the
abstract $\gq(n)$-crystal $\B(Y_{\la})$ is usually too big to be
isomorphic to $B(\la)$, the crystal of the irreducible highest
weight module $V^q(\la)$. In this section, we give an explicit
combinatorial realization of the $\qn$-crystal $B(\la)$ in terms
of {\em semistandard decomposition tableaux}.

\begin{definition} (cf. \cite[Section 1.2]{Serra}) \hfill
\begin{enumerate}[{\rm (1)}]

\item A word $u = u_1 \cdots  u_N$  is a {\em hook word} if
there exists $1 \le k \le N$ such that
\begin{equation*}\label{hookequation}
u_1 \ge u_2 \ge \cdots \ge u_k <u_{k+1} <\cdots < u_N.
\end{equation*}
 Every hook word has the \emph{decreasing part} $u \downarrow   = u_1 \cdots u_k$,
 and the \emph{increasing part} $u \uparrow = u_{k+1} \cdots u_N$ (note that the decreasing part is always nonempty).

\item For a strict partition $\lambda = (\lambda_1,\ldots,\lambda_n)$, the {\em shifted Young diagram of
shape $\lambda$} is an array of boxes in which the $i$-th
row has $\lambda_i$ many boxes, and is shifted $i-1$ units to the right
with respect to the top row. In this case, we  say that $\lambda$
is a \emph{shifted shape}.
\end{enumerate}
\end{definition}

\begin{example}
For $\la=(6,4,2,1)$, the shifted shape $\la$ is
$$\young(\hfill\hfill\hfill\hfill\hfill\hfill,:\hfill\hfill\hfill\hfill,::\hfill\hfill,:::\hfill) .$$
\end{example}

\begin{definition} (cf. \cite[Definition 2.14]{Serra}) \hfill
\begin{enumerate}[{\rm (1)}] \label{def_ssdt}
\item A {\em semistandard decomposition tableau of a
shifted shape $\lambda = (\lambda_1, \ldots, \lambda_n)$} is a
filling $T$ of a shifted shape $\lambda$ with elements of
$\{1,2,\ldots,n\}$ such that:
\begin{enumerate}
\item[(i)] the word $v_i$ formed by reading the $i$-th row from left to right is a hook word of length
$\lambda_i$,
\item[(ii)] $v_i$ is a hook subword of maximal length in $v_{i+1} v_i$ for $1 \le i \le \ell(\lambda)-1$. 
\end{enumerate}
\item The {\em reading word} of a semistandard decomposition tableau $T$ is
$${\rm read}(T) = v_{\ell(\lambda)} v_{\ell(\lambda)-1} \cdots v_1.$$
\end{enumerate}
\end{definition}

\begin{remark}
We change the definition of a hook word, and hence of a
semistandard decomposition tableau in \cite{Serra}, in order to
make the forms of the highest weight vectors and the lowest weight
vectors simpler than the ones in \cite{Serra}.
\end{remark}



\begin{example}
The following tableaux are semistandard decomposition tableaux
of a shifted shape $(3,1,0)$:
$$ {\young(211,:1)} \ ,   \ \   {\young(222,:1)} \ , \ \
{\young(213,:1)} \ ,    \ \ {\young(212,:1)}\ .$$

\noindent On the other hand, the following tableaux do not satisfy
the conditions in Definition \ref{def_ssdt} (1):
$$ {\young(121,:1)}\ , \ \  {\young(123,:1)} \ .$$
\end{example}

Let $\B (\lambda)$ be the set of all semistandard decomposition
tableaux $T$ with a shifted shape $\lambda$. For every strict
partition $\lambda$, we have the embedding
$$\read: \B (\lambda) \to \B^{\otimes |\lambda|}, \; T \mapsto
\read(T),$$ which enables us to identify $\B(\lambda)$ with a
subset in $\B^{\otimes |\lambda|}$ and define the action of the
Kashiwara operators $\tei, \teibar, \tfi, \tfibar$ on $\B(\la)$ by
the queer tensor product rule.

\begin{theorem}\cite[Theorem 2.5]{GJKKK3} The set $ \B (\lambda) \cup \{0\}$ is
stable under the Kashiwara operators $\tei, \teibar, \tfi,
\tfibar$. Hence, $\B (\lambda)$ becomes an abstract
$\mathfrak{q}(n)$-crystal.
\end{theorem}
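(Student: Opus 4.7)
Since the reading word embeds $\B(\lambda)$ into $\B^{\otimes|\lambda|}$, and the latter is already an abstract $\qn$-crystal by Example \ref{ex_abstract_crystal}(c), it suffices to show that $\B(\lambda)\cup\{0\}$ is closed under the Kashiwara operators inherited from $\B^{\otimes|\lambda|}$ via $\read$. Moreover, because $\te_{\ol i}=S_{w_i^{-1}}\teone S_{w_i}$ and $\tf_{\ol i}=S_{w_i^{-1}}\tfone S_{w_i}$ for $i\ge 2$, with each $S_w$ built from compositions of $\te_j,\tf_j$ via \eqref{def:Sicr}, it is enough to verify closure under $\te_i,\tf_i$ for $i=1,\dots,n-1$ and under $\teone,\tfone$.

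\textbf{Even case.} For $\te_i$ and $\tf_i$ one applies the standard signature rule to $\read(T)=v_{\ell(\la)}\cdots v_1$: reading left to right, attach $+$ to each occurrence of $i$ and $-$ to each occurrence of $i+1$, iteratively cancel adjacent $+-$ pairs, and act on the leftmost uncancelled $-$ (for $\te_i$) or the rightmost uncancelled $+$ (for $\tf_i$). First locate the row $v_r$ containing the modified letter, then determine whether the affected position lies in the decreasing part $v_r\downarrow$ or the increasing part $v_r\uparrow$. A case-by-case check based on these possibilities, together with an inspection of how the modified letter interacts with $v_{r-1}$ and $v_{r+1}$, shows that (i) $v_r$ remains a hook word of length $\la_r$ and (ii) the maximality of $v_r$ as a hook subword of $v_{r+1}v_r$ (and of $v_{r-1}$ in $v_rv_{r-1}$) is preserved.

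\textbf{Odd case.} For $\teone,\tfone$, iterate the queer tensor product rule of Theorem \ref{th2:tensor product}. The weight conditions $\lan k_1,\wt(\cdot)\ran=\lan k_2,\wt(\cdot)\ran=0$ single out the unique tensor position at which the operator acts, hence the unique entry of $T$ that gets modified (a $2$ becomes a $1$ under $\teone$, or a $1$ becomes a $2$ under $\tfone$). Once this entry is identified inside some row $v_r$, the same style of analysis as in the even case---comparing what happens at the boundary between $v_r\downarrow$ and $v_r\uparrow$ and checking interactions with $v_{r\pm1}$---shows that the hook word condition (i) and the maximal-subword condition (ii) of Definition \ref{def_ssdt} continue to hold, so the result is again the reading word of a semistandard decomposition tableau.

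\textbf{Main obstacle.} The maximality condition (ii) is the delicate invariant: after modifying a single letter we need $v_r$ to remain a hook subword of \emph{maximal} length in $v_{r+1}v_r$. This requires an honest length count comparing the longest hook subword of $v_{r+1}v'_r$ against $\la_r$, and it is this bookkeeping---particularly in the odd case, where a $1\leftrightarrow 2$ swap can shift where the decreasing-to-increasing transition of a row occurs and can simultaneously alter how many letters of $v_{r+1}$ can be extended into a hook subword using entries from the modified row---that consumes the bulk of the work.
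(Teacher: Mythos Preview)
Your proposal is correct and follows essentially the same route as the paper: reduce to the operators $\te_i,\tf_i$ ($i=1,\ldots,n-1$) and $\teone,\tfone$, first verify that a hook word stays a hook word, then handle the maximality condition (ii) by a case-by-case analysis on two consecutive rows. The one organizational sharpening in the paper worth adopting is that, for the maximality step, it reduces to the two-row case $\la_3=0$ and proves the single lemma ``if $\tf_i u$ or $\te_i u$ has a hook subword of length $m$, then so does $u$''; since $\te_i$ and $\tf_i$ are mutually inverse this immediately gives that the maximal hook-subword length is invariant, which is a cleaner way to package the bookkeeping you describe.
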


\begin{proof}
We first show that if $u$ is a hook word, then $\te_i u, \tf_i u$
($i=1, \ldots,n-1, \ol 1$) are hook words whenever they are nonzero.
Next, we prove that $\tf_i u, \te_i u$ ($i=1,...,n-1, \ol{1}$)
satisfy the condition in Definition \ref{def_ssdt} (1)(ii). For
this, we show that if $\tf_i u, \te_i u$ ($i=1,...,n-1, \ol{1}$) has
a hook subword of length $m$, then $u$ also has a hook subword of
length $m$ when $u \in \B(\la)$ and $\la_3=0$. The proof is based on
case-by-case check-ups.
\end{proof}


For a strict partition $\lambda$ with $\ell(\lambda) =r$, set
\begin{align}
T^{\lambda}:=& (1^{\lambda_r}) (2^{\lambda_r} 1^{\lambda_{r-1} -
\lambda_r}) \cdots ((r-k+1)^{\lambda_r}
(r-k)^{\lambda_{r-1}-\lambda_r} \cdots \nonumber 1^{\lambda_{k} - \lambda_{k+1}}) \\
&   \cdots (r^{\lambda_r} (r-1)^{\lambda_{r-1}-\lambda_r} \cdots
1^{\lambda_1 - \lambda_2}), \nonumber \\
L^{\lambda}:=&(n-r+1)^{\lambda_r} \cdots (n-k+1)^{\lambda_k}
\cdots n^{\lambda_1}. \nonumber
\end{align}
It is easy to check that $S_{w_0}T^{\la}=L^{\la}$.

\begin{example} \label{ex_hw}
Let  $n=4$ and $\lambda = (7,4,2,0)$. Then we have
$$T^{\lambda}=\young(3322111,:2211,::11) \ \text{and} \ L^{\lambda}=\young(4444444,:3333,::22).$$
\end{example}

The explicit combinatorial realization of $B(\la)$ is given in the
following lemma.

\begin{theorem} \cite[Theorem 2.5]{GJKKK3} Let $\la$ be a strict partition.
\bna
\item
The tableau $T^{\lambda}$ is a unique highest weight vector in $\B
(\lambda)$ and $L^{\lambda}$ is a unique lowest weight vector in
$\B (\lambda)$.
\item The abstract $\qn$-crystal $\B(\la)$ is isomorphic to $B(\la)$, the crystal of the
irreducible highest weight module $V^q(\la)$. \end{enumerate}
\end{theorem}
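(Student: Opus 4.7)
The plan is to prove (a) first and then deduce (b) essentially formally. For (a), one first checks by inspection that $T^{\lambda} \in \B(\lambda)$: each row is weakly decreasing and hence a hook word, and the hook-subword maximality condition of Definition \ref{def_ssdt} follows directly from the explicit shape. Next, I would verify that $T^{\lambda}$ is a highest weight vector by a direct computation. Applying the tensor product rule to $\read(T^{\lambda})$ row by row, one sees that $\tei T^{\lambda} = 0$ for $i = 1, \ldots, n-1$, and a parallel (slightly more delicate) argument using the queer tensor product rule gives $\teone T^{\lambda} = 0$. Since $\teibar = S_{w_i^{-1}} \teone S_{w_i}$, and the weight $w_i(\lambda)$ of $S_{w_i} T^{\lambda}$ has its first two coordinates in positions that prevent a nontrivial $\teone$-action, one concludes $\teibar T^{\lambda} = 0$ for $i = 2, \ldots, n-1$ as well.

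For the uniqueness of the highest weight vector in $\B(\lambda)$, I would proceed by induction on $|\lambda|$, the inductive hypothesis being that $T^{\mu}$ is the unique highest weight vector in $\B(\mu)$ and that $\B(\mu) \simeq B(\mu)$ for every strict partition $\mu$ with $|\mu| < |\lambda|$ (so that a highest weight vector of $\B^{\otimes|\mu|}$ lying in $\B(\mu)$ is forced to equal $T^{\mu}$). Given a highest weight vector $T \in \B(\lambda)$, Lemma \ref{le:char.h.w} yields $T = 1 \otimes \tf_1 \cdots \tf_{j-1} b$ for some $j$ making $\lambda - \epsilon_j$ a strict partition and some highest weight vector $b \in \B^{\otimes(|\lambda|-1)}$ of weight $\lambda - \epsilon_j$. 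The crux is a combinatorial argument showing that the semistandard decomposition tableau condition on $T$ forces $b \in \B(\lambda - \epsilon_j)$ and pins down $j$ uniquely --- namely, as the index corresponding to the outermost removable cell of the bottom row of the shifted shape $\lambda$. The inductive hypothesis then identifies $b = T^{\lambda - \epsilon_j}$, and an explicit computation of $1 \otimes \tf_1 \cdots \tf_{j-1} T^{\lambda - \epsilon_j}$ yields $T = T^{\lambda}$. The lowest weight vector assertion follows by computing $S_{w_0} T^{\lambda} = L^{\lambda}$ directly and using the Weyl group bijection that interchanges highest and lowest weight vectors.

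For (b), the preceding theorem guarantees that $\B(\lambda)$ is stable under all Kashiwara operators, hence is a disjoint union of connected components of $\B^{\otimes|\lambda|}$. By the corollary following Theorem \ref{th:main theorem}, each such connected component is isomorphic to some $B(\mu)$, and by Theorem \ref{th:main theorem}(b) each such component carries a unique highest weight vector. Since (a) supplies $T^{\lambda}$ as the unique highest weight vector of $\B(\lambda)$, the set $\B(\lambda)$ must consist of a single component, which is therefore isomorphic to $B(\lambda)$; this simultaneously advances the joint induction used in (a).

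The main obstacle is the uniqueness step in (a): showing that the inductive recipe of Lemma \ref{le:char.h.w} produces a semistandard decomposition tableau of shifted shape $\lambda$ for exactly one choice of $(j, b)$, and that the resulting tensor is indeed $T^{\lambda}$. I expect this to require careful case-by-case analysis of the interaction between the hook-subword maximality condition of Definition \ref{def_ssdt} and the action of $\tf_1, \ldots, \tf_{j-1}$, in the spirit of the case-by-case check-ups mentioned for the preceding stability theorem.
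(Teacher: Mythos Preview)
Your overall architecture (prove (a) by induction on $|\lambda|$ using Lemma~\ref{le:char.h.w}, then deduce (b) formally) matches the paper's, but the paper takes a different and substantially cleaner route through \emph{lowest} weight vectors. From Lemma~\ref{le:char.h.w} one first derives the characterization \eqref{ch:lowest wt vector}: for $a\in\B$ and $b\in\B^{\otimes N}$, the tensor $a\otimes b$ is a lowest weight vector if and only if $b$ is a lowest weight vector and $\epsilon_a+\wt(b)\in w_0(\Lambda^+\cap P^{\ge0})$. This criterion involves no Kashiwara operators at all, so the induction on $|\lambda|$ showing that $L^\lambda$ is the unique lowest weight vector in $\B(\lambda)$ amounts to stripping the first letter of the reading word and checking a weight condition. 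The statement for $T^\lambda$ then follows from $S_{w_0}T^\lambda=L^\lambda$, with no need ever to verify $\teibar T^\lambda=0$ directly. Part (b) is then immediate, exactly as you say.

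Your direct highest-weight approach has two real difficulties. First, the argument for $\teibar T^\lambda=0$ is not correct as stated: knowing that $S_{w_i}T^\lambda$ has weight $w_i(\lambda)$ tells you nothing about whether $\teone$ kills it, since $\teone$ is not governed by weight alone. You would have to compute $S_{w_i}T^\lambda$ explicitly and check. Second, and more seriously, the ``main obstacle'' you flag is genuinely harder in your setup than in the paper's. In Lemma~\ref{le:char.h.w} the smaller highest weight vector $b$ is wrapped inside $\tf_1\cdots\tf_{j-1}$, so after peeling off the leading $1$ from $\read(T)$ you must apply $\te_{j-1}\cdots\te_1$ before you can hope to recognise a reading word of some $\B(\lambda-\epsilon_j)$; there is no a priori reason this sequence of even operators stays inside the image of $\read$. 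The lowest-weight characterization \eqref{ch:lowest wt vector} sidesteps this entirely, which is why the paper chooses it.
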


\begin{proof}
Using Lemma \ref{le:char.h.w}, the lowest weight vectors are
characterized as follows:
\begin{equation} \label{ch:lowest wt vector}
\text{\parbox{10cm}{For $a \in \B$ and $b \in
\B^{\otimes N}$, $a \otimes b$ is a lowest weight vector if and
only if $b$ is a lowest weight vector and $\epsilon_a + \wt(b) \in w_0 (\Lambda^+ \cap P^{\ge 0})$.}}
\end{equation}
Using induction on $|\la|$ and the above statement, we conclude
that $L^{\la}$ is a unique lowest weight vector in $\B(\la)$.
Since $S_{w_0}T^{\la}=L^{\la}$, we get the first assertion. The
second assertion follows from the first one directly.
\end{proof}

\vskip 3mm

\begin{example} \hfill
\bna
\item Since any word of length 2 is a hook word, we
obtain $\B \otimes \B \simeq \B(2\epsilon_1)$, and hence the
crystal in Example \ref{ex_abstract_crystal} (3) is isomorphic to
the $\gq(3)$-crystal $\B(2\epsilon_1)$.

\item In Figure~\ref{fi:B(3,1,0)}, we present the
$\gq(3)$-crystal $\B(3\epsilon_1 +\epsilon_2)$.

 \begin{figure}[h]
$$\scalebox{.7}{\xymatrix@R=1pc@H=1pc{ & &  {\young(211,:1)} \ar[dl]_1 \ar_2[d]  \ar^{\ol 1}@{-->}[dr] & & & \\
& {\young(221,:1)} \ar@<-0.5ex>_1[dl] \ar@<1ex>^{\ol 1}@{-->}[dl]
\ar_2[d] & {\young(311,:1)} \ar_1[d]\ar^{\ol 1}@{-->}[dr] &
{\young(212,:1)} \ar_2[d] & & \\
{\young(222,:1)} \ar_2[d] & {\young(321,:1)}\ar@<-0.5ex>_1 [dl]
\ar@<1ex>^{\ol 1}@{-->}[dl] \ar_2[d] & {\young(311,:2)}  \ar_1 [d]
\ar^{\ol 1}@{-->}[dr] & {\young(312,:1)} \ar_1 [d] \ar_2[dr] & &
{\young(213,:1)} \ar@<-0.5ex>_1 [d]
\ar@<1ex>^{\ol 1}@{-->}[d] \\
 {\young(322,:1)} \ar_2[d] & {\young(331,:1)} \ar^{\ol 1}@{-->}[dl] \ar_1 [d] & {\young(321,:2)} \ar@<-0.5ex>_1 [d]
\ar@<1ex>^{\ol 1}@{-->}[d] \ar_2[dl] & {\young(312,:2)}
 \ar_2[d] & {\young(313,:1)} \ar_1 [dl] \ar^{\ol 1}@{-->}[dr] & {\young(223,:1)} \ar_2[d] \\
 {\young(332,:1)} \ar_2[dr] & {\young(331,:2)} \ar@<-0.5ex>@{-->}_{\ol 1}[dr] \ar@<1ex>^{1}[dr]
 &{\young(322,:2)} \ar_2[d] & {\young(313,:2)} \ar@<-0.5ex>_1 [d] \ar@<1ex>^{\ol 1}@{-->}[d] &
 & {\young(323,:1)} \\
   & {\young(333,:1)} \ar@<-0.5ex>@{-->}_{\ol 1}[dr] \ar@<1ex>^{1}[dr]    &  {\young(332,:2)} \ar_2[d] &   {\young(323,:2)} &  &  \\
   & &{ \young(333,:2)}&& &}}$$
\caption{$\B(3\epsilon_1 +\epsilon_2)$ for $n=3$}
\label{fi:B(3,1,0)}
\end{figure}

\item In Figure \ref{fi:B(3,0,0)} and Figure \ref{fi:B(2,1,0)}, we
illustrate the $\gq(3)$-crystal $\B(3 \epsilon_1)$ and $\B(2
\epsilon_1 + \epsilon_2)$, respectively.
By Lemma \ref{le:char.h.w} there are two highest weight vectors $1
\otimes 1 \otimes 1$ and $1 \otimes 2 \otimes 1$ in $\B^{\otimes
3}$. Therefore we obtain $\B^{\otimes 3} \simeq \B(3 \epsilon_1)
\oplus \B(2 \epsilon_1 + \epsilon_2)$.

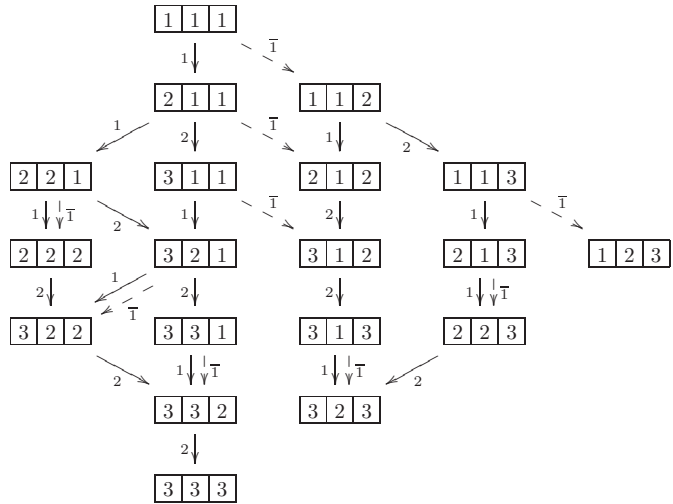
\begin{figure}[h]
 $$\scalebox{.8}{\xymatrix@R=1pc@H=1.5pc{ &  {\young(111)} \ar_1 [d] \ar^{\ol 1}@{-->}[dr] &&&   \\
& {\young(211)} \ar_1[dl] \ar_2[d]  \ar^{\ol 1}@{-->}[dr]  & {\young(112)}   \ar_1[d] \ar_2[dr]  & &    \\
 {\young(221)} \ar@<-0.5ex>_1 [d] \ar@<1ex>^{\ol 1}@{-->}[d] \ar[dr]_2 & {\young(311)} \ar_1[d] \ar^{\ol 1}@{-->}[dr]   & {\young(212)} \ar_2[d]& {\young(113)} \ar[d]_1 \ar^{\ol 1}@{-->}[dr]&  \\
  {\young(222)}\ar[d]_2 & {\young(321)}  \ar@<-0.5ex>_1
[dl] \ar@<1ex>^{\ol 1}@{-->}[dl]  \ar[d]_2  & {\young(312)}  \ar[d]_2 & {\young(213)} \ar@<-0.5ex>_1 [d] \ar@<1ex>^{\ol 1}@{-->}[d] & {\young(123)}   \\
{\young(322)} \ar[dr]_2 & {\young(331)} \ar@<-0.5ex>_1[d] \ar@<1ex>^{\ol 1}@{-->}[d]  & {\young(313)} \ar@<-0.5ex>_1 [d] \ar@<1ex>^{\ol 1}@{-->}[d] & {\young(223)}\ar[dl]^2 &   \\
& {\young(332)}\ar[d]_2      & {\young(323)} & & \\
&{\young(333)} &&& }}$$
 \caption{${\mathbf B}(3 \epsilon_1)$ for $n=3$.} \label{fi:B(3,0,0)}
   \end{figure}

\begin{figure}[!h]
$$\scalebox{.8}{\xymatrix@R=1pc@H=1.5pc{  & {\young(21,:1)} \ar@<-0.5ex>_1[d] \ar@<1ex>^{\ol 1}@{-->}[d] \ar[dr]_2  &  \\
 & {\young(22,:1)} \ar[d]_2 &  {\young(31,:1)} \ar_{\ol 1}@{-->}[dl] \ar[d]_1 \\
  & {\young(32,:1)} \ar[d]_2 &  {\young(31,:2)} \ar@<-0.5ex>_1[d] \ar@<1ex>^{\ol 1}@{-->}[d]  \\
 &{\young(33,:1)} \ar@<-0.5ex>_1[d] \ar@<1ex>^{\ol 1}@{-->}[d] &  {\young(32,:2)} \ar[dl]_2 \\
 & {\young(33,:2)}  & }}$$
 \caption{$\B(2 \epsilon_1 +\epsilon_2)$ for $n=3$.} \label{fi:B(2,1,0)}
   \end{figure}
 \ena
\end{example}

Now the natural question is how to decompose $\mathbf B(\lambda)
\otimes \mathbf B(\mu)$ into a disjoint union of connected
components.
 We define $\la \leftarrow j$ to be the array of boxes obtained
from the shifted shape $\la$ by adding a box at the $j$-th row.
Let us denote by $\la \leftarrow j_1 \leftarrow  \cdots \leftarrow
j_r$ the array of boxes obtained from $\la \leftarrow j_1
\leftarrow \cdots \leftarrow j_{r-1}$ by adding a box at the
$j_r$-th row. We define $\B(\la \leftarrow j_1 \leftarrow \cdots
\leftarrow j_r)$ to be the empty set unless $\la \leftarrow j_1
\leftarrow \cdots \leftarrow j_k$ is a shifted shape for all
$k=1,\ldots,r$.

\begin{theorem} \cite[Theorem 2.8]{GJKKK3} \label{LR rule 1}
Let $\la$ and $\mu$ be strict partitions. Then there is a
$\gq(n)$-crystal isomorphism
\begin{align}
\nn &\B(\la) \otimes \B(\mu) \simeq \soplus_{u_1 u_2 \cdots u_N
\in \B(\la)} \B(\mu \leftarrow(n-u_N+1)  \leftarrow \cdots
\leftarrow (n-u_1+1)),
\end{align}
where $N=|\la|$.
\end{theorem}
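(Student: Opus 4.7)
My plan is to identify the connected components of $\B(\la) \otimes \B(\mu)$ through their unique lowest weight vectors. By the crystal version of the complete reducibility theorem (the corollary immediately following Theorem \ref{th:main theorem}), $\B(\la) \otimes \B(\mu)$ decomposes as a disjoint union of $\qn$-crystals of the form $\B(\nu)$ for strict partitions $\nu$, and Theorem \ref{th:main theorem}(b) guarantees that each such $\B(\nu)$ has a unique lowest weight vector. Hence enumerating the lowest weight vectors in $\B(\la) \otimes \B(\mu)$ and reading off their weights determines the decomposition completely.

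The first step is to characterize lowest weight vectors of $\B^{\otimes N} \otimes \B(\mu)$ by iterating the criterion \eqref{ch:lowest wt vector}. Peeling off tensor factors on the left one at a time shows that $x_1 \otimes x_2 \otimes \cdots \otimes x_N \otimes T'$ is a lowest weight vector if and only if $T'$ is a lowest weight vector of $\B(\mu)$ (so $T' = L^{\mu}$) and, for each $k = N, N-1, \ldots, 1$, the partial sum $\mu + \epsilon_{n-x_N+1} + \cdots + \epsilon_{n-x_k+1}$ is a strict partition. Using $w_0(\epsilon_j) = \epsilon_{n-j+1}$, this is exactly the condition that the iterated shape $\mu \leftarrow (n-x_N+1) \leftarrow \cdots \leftarrow (n-x_k+1)$ is a well-defined shifted shape at every stage, and in that case the weight of the lowest weight vector equals $\sum_i \epsilon_{x_i} + w_0(\mu) = w_0(\mu \leftarrow (n-x_N+1) \leftarrow \cdots \leftarrow (n-x_1+1))$, so by Theorem \ref{th:main theorem}(b) the corresponding connected component is isomorphic to $\B(\mu \leftarrow (n-x_N+1) \leftarrow \cdots \leftarrow (n-x_1+1))$.

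Next I would restrict to $\B(\la) \otimes \B(\mu) \subset \B^{\otimes N} \otimes \B^{\otimes |\mu|}$. Because $\B(\la)$ and $\B(\mu)$ are closed under the $\qn$-Kashiwara operators inside their ambient tensor powers, the lowest weight vectors of $\B(\la) \otimes \B(\mu)$ are exactly the tensors $T \otimes L^{\mu}$ with $T \in \B(\la)$ whose reading word $\read(T) = u_1 u_2 \cdots u_N$ makes $\mu \leftarrow (n-u_N+1) \leftarrow \cdots \leftarrow (n-u_1+1)$ valid at each intermediate stage. Summing the resulting connected components over all $T \in \B(\la)$, together with the empty-set convention on $\B(\mu \leftarrow \cdots)$ when some intermediate shape is invalid, yields the claimed isomorphism. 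The main technical obstacle is the careful iteration of \eqref{ch:lowest wt vector}: since that criterion is stated only for a single factor in $\B$ paired with an element of $\B^{\otimes M}$, one must check at each peeling step that the condition ``$\epsilon_a + \wt(b) \in w_0(\Lambda^+ \cap P^{\geq 0})$'' translates cleanly into ``the next intermediate shape is a strict partition,'' with no slippage at boundaries where coordinates collide. After that, the $w_0$-symmetry $w_0(\epsilon_j) = \epsilon_{n-j+1}$ performs all the bookkeeping between weights and shifted shapes, and the identification of each component with $\B(\nu)$ is immediate.
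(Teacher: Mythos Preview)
Your proposal is correct and follows essentially the same argument as the paper: both proofs enumerate the lowest weight vectors of $\B(\la)\otimes\B(\mu)$ by iterating the characterization \eqref{ch:lowest wt vector}, obtaining exactly the elements $u_1\cdots u_N\otimes L^{\mu}$ for which every partial sum $w_0\mu+\epsilon_{u_N}+\cdots+\epsilon_{u_k}$ lies in $w_0(\Lambda^+\cap P^{\ge 0})$, and then translate this via $w_0(\epsilon_j)=\epsilon_{n-j+1}$ into the shifted-shape condition on $\mu\leftarrow(n-u_N+1)\leftarrow\cdots\leftarrow(n-u_1+1)$. Your write-up is more explicit about the iteration and the role of the complete reducibility corollary, but the underlying strategy is identical.
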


\begin{proof}
By the characterization \eqref{ch:lowest wt vector}, the lowest
weight vectors in $\B(\la) \otimes \B(\mu)$ have the form $u_1
\cdots u_N \otimes L^{\mu}$ such that $w_0 \mu + \epsilon_{u_N} +
\epsilon_{u_{N-1}} + \cdots + \epsilon_{u_{k}} \in w_0 (\Lambda^+
\cap P^{\ge 0})$ for all $k=1, \ldots, N$. Hence, the weights of
the highest weight vectors in $\B(\la) \tensor \B(\mu)$ are of the
form $\mu \leftarrow(n-u_N+1) \leftarrow \cdots \leftarrow
(n-u_1+1)$ as desired.
\end{proof}

By Theorem \ref{LR rule 1}, we obtain an explicit description of
{\it shifted Littlewood-Richardson coefficients}.

\begin{corollary} \cite[Corollary 2.9]{GJKKK3}
We define
\begin{equation*}
\begin{array}{rl}
\mathcal{LR}_{\la,\mu}^\nu \seteq \{u=u_1\cdots u_N \in \B(\la) \ ; & {\rm (a)} \ \wt(u)= w_0(\nu -\mu) \ \text{and} \\
  {\rm (b)} \ \mu + \epsilon_{n-u_N+1} +  \cdots &+ \epsilon_{n-u_{k}+1} \in  \Lambda^+ \cap P^{\ge 0} \
  \text{for all} \ 1 \leq k \leq N\},
\end{array}
\end{equation*}
and set $f_{\la,\mu}^{\nu} \seteq |\mathcal{LR}_{\la,\mu}^\nu|$.
Then there is a $\qn$-crystal isomorphism
\begin{equation} \label{eq_decomposition}
\B(\la) \otimes \B(\mu) \simeq \bigoplus_{\nu \in \Lambda^+ \cap
P^{\ge 0}} \B(\nu)^{\oplus f_{\la,\mu}^{\nu}}.
\end{equation}
\end{corollary}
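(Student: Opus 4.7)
The plan is to derive this Corollary as a direct reorganization of the decomposition given in Theorem \ref{LR rule 1}. The key observation is that each summand $\B(\mu \leftarrow (n-u_N+1) \leftarrow \cdots \leftarrow (n-u_1+1))$ appearing there is either the empty set or isomorphic to $\B(\nu)$ for a unique strict partition $\nu \in \Lambda^+ \cap P^{\ge 0}$; the proof is simply a matter of identifying which $u \in \B(\la)$ contribute a copy of $\B(\nu)$ and checking that these are exactly the elements of $\mathcal{LR}_{\la,\mu}^{\nu}$.

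First, I would use the convention stated just before Theorem \ref{LR rule 1}: $\B(\mu \leftarrow j_1 \leftarrow \cdots \leftarrow j_r) = \emptyset$ unless each intermediate array $\mu \leftarrow j_1 \leftarrow \cdots \leftarrow j_k$ is a shifted shape, i.e.\ a strict partition. Substituting $j_k = n-u_{N-k+1}+1$, this non-emptiness criterion becomes
$$\mu + \epsilon_{n-u_N+1} + \cdots + \epsilon_{n-u_k+1} \in \Lambda^+ \cap P^{\ge 0} \quad \text{for every } 1 \le k \le N,$$
which is exactly condition (b) in the definition of $\mathcal{LR}_{\la,\mu}^{\nu}$.

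Next, I would identify the resulting shape. When the summand indexed by $u$ is non-empty, it equals $\B(\nu)$ where $\nu = \mu + \epsilon_{n-u_N+1} + \cdots + \epsilon_{n-u_1+1}$. Since the longest element $w_0$ of the Weyl group of $\mathfrak{gl}(n)$ acts on weights by $w_0\epsilon_i = \epsilon_{n-i+1}$, I would compute
$$w_0(\nu - \mu) = \epsilon_{u_N} + \epsilon_{u_{N-1}} + \cdots + \epsilon_{u_1} = \wt(u),$$
so the condition $\wt(u) = w_0(\nu-\mu)$ in (a) is precisely the requirement that the associated sequence of box additions terminates at the given partition $\nu$. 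Thus the $u \in \B(\la)$ contributing a copy of $\B(\nu)$ are exactly those in $\mathcal{LR}_{\la,\mu}^{\nu}$.

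Finally, regrouping the right hand side of Theorem \ref{LR rule 1} according to the value of $\nu$ yields
$$\B(\la) \otimes \B(\mu) \simeq \bigoplus_{\nu \in \Lambda^+ \cap P^{\ge 0}} \bigoplus_{u \in \mathcal{LR}_{\la,\mu}^{\nu}} \B(\nu) \simeq \bigoplus_{\nu \in \Lambda^+ \cap P^{\ge 0}} \B(\nu)^{\oplus f_{\la,\mu}^{\nu}},$$
as required. I do not anticipate any substantive obstacle; the entire argument is pure bookkeeping once Theorem \ref{LR rule 1} is in hand. The only point requiring genuine verification is the brief $w_0$-computation above that matches conditions (a) and (b) to the non-emptiness and the final shape of each summand, and this is immediate from the formula $w_0 \epsilon_i = \epsilon_{n-i+1}$.
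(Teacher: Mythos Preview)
Your proposal is correct and is precisely the intended derivation: the paper presents this statement as an immediate corollary of Theorem~\ref{LR rule 1}, and your argument spells out exactly that reorganization, matching condition~(b) to the non-emptiness convention for $\B(\mu \leftarrow j_1 \leftarrow \cdots \leftarrow j_r)$ and condition~(a) to the identification of the terminal shape via $w_0\epsilon_i = \epsilon_{n-i+1}$.
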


\begin{example}
Let $n=3$, $\la=2 \epsilon_1 + \epsilon_2$ and $\mu=3 \epsilon_1$.
For $u_1 u_2 u_3 \in \B(\la)$, if $u_3 =1$ then the array $\mu
\leftarrow (3-u_3+1)$ is not a shifted shape. When $u_1u_2u_3=132$
or $ 133$, $\mu \leftarrow (3-u_3+1) \leftarrow (3-u_2+1)
\leftarrow( 3-u_1+1)$ is not a shifted shape. For the other $u_1 u_2
u_3 \in \B(\lambda)$, $\mu \leftarrow (3-u_3 +1) \leftarrow
(3-u_2+1) \leftarrow (3-u_1 +1)$ is given as follows:
\begin{align*}
&\young(\hfill\hfill\hfill,:\hfill\hfill,::\hfill)  \ \ (u_1u_2u_3=122), \ \ \
\young(\hfill\hfill\hfill\hfill,:\hfill\hfill)  \ \ (u_1u_2u_3=232), \\
& \young(\hfill\hfill\hfill\hfill\hfill,:\hfill) \ \ (u_1u_2u_3=233).
\end{align*}
So we obtain
$$\B(2 \epsilon_1  + \epsilon_2) \otimes \B(3 \epsilon_1) \simeq
\B(3 \epsilon_1  + 2\epsilon_2+\epsilon_3) \op \B(4 \epsilon_1 +
2\epsilon_2) \op \B(5\epsilon_1 + \epsilon_2).$$
\end{example}

As seen in \eqref{eq_decomposition}, the connected component
containing $T \otimes T' \in \B(\la) \otimes \B(\mu)$ is
isomorphic to $\B(\nu)$ for some $\nu$. In order to find $\nu$ and
the element
$S$ of $\B(\nu)$ 
corresponding to $T \otimes T'$ explicitly, we define the {\it
insertion scheme} for semistandard decomposition tableaux.

For an abstract $\gq(n)$-crystal $B$ and an element $b \in B$,
we denote the connected component of $b$ in $B$ by $C(b)$ .
\begin{definition}
Let $B_i$ be an abstract $\qn$-crystals and let $b_i \in B_i$
$(i=1,2)$. We say that $b_1$ is \emph{$\qn$-crystal equivalent to}
$b_2$ if there exists an isomorphism of crystals
$$ C(b_1) \isoto C(b_2)$$
sending $b_1$ to $b_2$. We denote this equivalence relation by
$b_1 \sim b_2$.
\end{definition}

The following $\gq(n)$-crystal equivalence, which is called the
{\em queer Knuth relation}, can be verified in a straightforward
manner.

\begin{proposition}  \cite[Proposition 3.3]{GJKKK3} {\rm (cf. \cite[Theorem 1.4]{Serra})}  \label{prop:Knuth relation}
Let $B_1$ and $B_2$ be the connected components containing $1121$
and $1211$ in $\B^{\otimes 4}$, respectively. Then there exists a
$\qn$-crystal isomorphism $\psi : B_1 \to B_2$ such that
\begin{eqnarray}
\psi(abcd)&
=acbd & \text{if} \ d \leq b \leq a < c \label{eq_Knuth_A} \\
     && \text{or} \ b < d \leq a < c  \label{eq_Knuth_B} \\
     && \text{or} \ b \leq a < d \leq c \label{eq_Knuth_C} \\
     && \text{or} \ a < b < d \leq c, \label{eq_Knuth_D}  \allowdisplaybreaks\\
&=bacd & \text{if} \ b < d \leq c \leq a \label{eq_Knuth_E} \\
     && \text{or} \ d \leq b < c \leq a, \label{eq_Knuth_F}\allowdisplaybreaks\\
&=abdc & \text{if} \ a < d \leq b < c \label{eq_Knuth_G} \\
     && \text{or} \ d \leq a < b < c. \label{eq_Knuth_H}
\end{eqnarray}
\end{proposition}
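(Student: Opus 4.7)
The plan is to first identify $B_1$ and $B_2$ as isomorphic copies of the same abstract $\qn$-crystal, and then verify the explicit formulas \eqref{eq_Knuth_A}--\eqref{eq_Knuth_H} by case analysis on the Kashiwara operators.

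I would first apply Lemma \ref{le:char.h.w} to confirm that both $1121$ and $1211$ are highest weight vectors of weight $\lambda = 3\epsilon_1 + \epsilon_2$ in $\B^{\otimes 4}$. For $1121 = 1 \otimes 121$, the lemma applies with $j = 1$ and reduces to the claim that $121 \in \B^{\otimes 3}$ is highest weight; in turn $121 = 1 \otimes \tilde f_1(1 \otimes 1)$ matches the lemma with $j = 2$ and base vector $1 \otimes 1$, which is visibly highest weight in $\B^{\otimes 2}$. For $1211 = 1 \otimes \tilde f_1(111)$, the lemma applies with $j = 2$ and base vector $111 \in \B^{\otimes 3}$, which is trivially highest weight. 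Since $\lambda$ is a strict partition, Theorem \ref{th:main theorem} yields $B_1 \simeq B(\lambda) \simeq B_2$ as abstract $\qn$-crystals, and the crystal isomorphism $\psi : B_1 \to B_2$ sending $1121 \mapsto 1211$ is unique.

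To establish the explicit description, I would perform an induction outward from $1121$ along the Kashiwara operators. Assuming $abcd$ satisfies one of the eight conditions with claimed image $\psi(abcd) = a'b'c'd'$, I would apply each operator $\tilde e_i, \tilde f_i$ and $\tilde e_{\bar 1}, \tilde f_{\bar 1}$ using the tensor product rule of Theorem \ref{th2:tensor product} and verify that either the operator annihilates both $abcd$ and $a'b'c'd'$, or else that $\tilde f_i(abcd)$ again satisfies one of the eight conditions with proposed image $\tilde f_i(a'b'c'd')$. Since $B_1$ is connected and reachable from $1121$ by the lowering operators, this closure property suffices to pin down $\psi$ on every element of $B_1$ that matches one of the eight patterns, and a parallel argument shows those patterns in fact exhaust $B_1$.

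The main obstacle is the bookkeeping required for the odd Kashiwara operators. The rule \eqref{eq1:tensor product} for $\tilde e_{\bar 1}$ and $\tilde f_{\bar 1}$ branches according to whether $\langle k_1, \wt(b_2) \rangle$ and $\langle k_2, \wt(b_2) \rangle$ both vanish on the right tensor factor, which forces each of the eight main cases to split into subcases depending on whether individual entries of $abcd$ lie in $\{1,2\}$. Moreover, the even operators $\tilde f_i$ for $i \ge 2$ act nontrivially only on entries $\ge i$, requiring separate checks for words whose entries are all small and for words involving larger entries. While tedious, the verification is elementary and finite, matching the claim that the relation can be checked in a straightforward manner.
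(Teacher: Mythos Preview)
Your proposal is correct and essentially matches what the paper indicates: the text simply asserts that the relation ``can be verified in a straightforward manner,'' and your case-by-case check that the Kashiwara operators (even and odd) respect the map defined by \eqref{eq_Knuth_A}--\eqref{eq_Knuth_H} is exactly such a verification. Your preliminary step---using Lemma~\ref{le:char.h.w} and Theorem~\ref{th:main theorem} to establish abstractly that $B_1 \simeq B(3\epsilon_1+\epsilon_2) \simeq B_2$---is a clean conceptual addition not spelled out in the paper, and it guarantees in advance that the case analysis will close up, but the core argument is the same direct check the paper alludes to.
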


\begin{definition}(cf. \cite[Definition 2.18]{Serra})
Let $T$ be a semistandard decomposition tableau of shifted shape
$\la$. For $x \in \B$, we define $T \leftarrow x$ to be a filling
of an array of boxes obtained from $T$ by applying the following
procedure: \begin{enumerate}[{\rm (1)}]
\item Let $v_1=u_1 \cdots u_m$ be the reading word of the first row of $T$
such that $u_1 \geq \cdots \geq u_k < \cdots < u_m$ for some $1
\leq k \leq m$. If $v_1 x$ is a hook word, then put $x$ at the end
of the first row and stop the procedure.

\item Assume that $v_1 x$ is not a hook word.
Let $u_j$ be the leftmost element in $v_1 \uparrow$ which is
greater than or equal to $x$. Replace $u_j$ by $x$. Let $u_i$ be
the leftmost element in $v_1 \downarrow$ which is strictly less
than $u_j$. Replace $u_i$ by $u_j$. (Hence $u_i$ is bumped out of
the first row.)

\item Apply the same procedure for the second row with $u_i$ as described in (1) and (2).

\item Repeat the same procedure row by row from top to bottom
until we place a box at the end of a row of $T$. \end{enumerate}
\end{definition}

\begin{example}
Since
\begin{align}
\nonumber \young(66135) \leftarrow 2 = \young(66325,:1), &
  \qquad \young(324) \leftarrow 1 = \young(421,:3),
\end{align}
we obtain
\begin{align}
\young(66135,:324) \leftarrow 2 =\young(66325,:421,::3). \nonumber
\end{align}
\end{example}

Let $T$ and $T'$ be semistandard decomposition tableaux. We define
$T \leftarrow T'$ to be
$$(\cdots((T \leftarrow u_1) \leftarrow u_2) \cdots )\leftarrow u_N,$$
where $u_1 u_2 \cdots u_N$ is the reading word of $T'$.

\begin{example} \label{T <- T'}
\begin{align*}
\young(22,:1) \leftarrow \young(333) &=  \Big(\Big( \Big(\young(22,:1)\leftarrow 3\Big) \leftarrow 3\Big)\leftarrow 3\Big) \\
& =  \Big(\Big(\young(223,:1) \leftarrow 3 \Big) \leftarrow 3\Big) =  \Big(\young(323,:12) \leftarrow 3\Big) \\
&  = \young(333,:22,::1) \ \ ,
\end{align*}
\end{example}

\begin{proposition} \cite[Proposition 3.13, Corollary 3.14]{GJKKK3}  \label{pro_T<-x}
\begin{enumerate}[{\rm (1)}]
\item $T \otimes T'$ is $\qn$-crystal equivalent to $T \leftarrow T'$.

\item $T \leftarrow T'$ is a semistandard decomposition tableau.
\end{enumerate}
\end{proposition}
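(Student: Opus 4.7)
The plan is to reduce both assertions to single-letter insertion $T \leftarrow x$, and then realize the bumping procedure as an explicit sequence of applications of the queer Knuth relation from Proposition \ref{prop:Knuth relation}. For (1), once we know $T \otimes x \sim T \leftarrow x$ for every semistandard decomposition tableau $T$ and every $x \in \B$, the general case follows by induction on $|T'|$: writing $\read(T') = w \cdot u_N$, we have $T \otimes T' \sim (T \otimes W) \otimes u_N \sim (T \leftarrow W) \otimes u_N \sim (T \leftarrow W) \leftarrow u_N = T \leftarrow T'$, where $W$ is the semistandard decomposition tableau recording $w$ (at the end the two tableaux $T' $ and $W \otimes u_N$ are crystal-equivalent by inductive hypothesis and the associativity of the tensor product of abstract $\qn$-crystals).

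For single-letter insertion, we argue by induction on $\ell(\lambda)$. Let $v_1 = u_1 \cdots u_m$ be the first row, with $u_1 \ge \cdots \ge u_k < \cdots < u_m$. If $v_1 x$ is already a hook word the insertion just appends $x$ and there is nothing to prove, so suppose not. Let $u_j$ be the leftmost entry of $v_1 \uparrow$ with $u_j \ge x$ and $u_i$ the leftmost entry of $v_1 \downarrow$ with $u_i < u_j$; the procedure replaces $u_j$ by $x$, replaces $u_i$ by $u_j$, and bumps $u_i$ into the second row. The key claim is that, at the level of reading words,
\[
 v_r \cdots v_2 \cdot v_1 \cdot x \ \sim \ v_r \cdots v_2 \cdot u_i \cdot v_1',
\]
where $v_1'$ is the new first row. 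This is proved by moving $x$ to the left through $v_1 \uparrow$ using the cases \eqref{eq_Knuth_E}--\eqref{eq_Knuth_H} of Proposition \ref{prop:Knuth relation} (which are tailored exactly to ``$ab\mbox{-}cd \leftrightarrow ba\mbox{-}cd$'' and ``$ab\mbox{-}cd \leftrightarrow ab\mbox{-}dc$'' in the inequality ranges that occur in a hook word), and symmetrically through $v_1 \downarrow$ using \eqref{eq_Knuth_A}--\eqref{eq_Knuth_D}; by design each elementary move leaves the remaining prefix $v_r \cdots v_2$ untouched. Applying the inductive hypothesis to the insertion of $u_i$ into the tableau $T_{\ge 2}$ formed by rows $2, \ldots, r$ gives $T_{\ge 2} \otimes u_i \sim T_{\ge 2} \leftarrow u_i$, and concatenating with $v_1'$ completes the induction.

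For (2) we verify, at each step of the single-letter insertion, that the modified row $v_1'$ is again a hook word and that the hook-subword-of-maximal-length condition of Definition \ref{def_ssdt}(1)(ii) is preserved between $v_1'$ and the row below. Since $x \le u_j$ and $u_j > u_i$, replacing $u_j$ by $x$ in the increasing part and $u_i$ by $u_j$ in the decreasing part keeps the resulting row strictly hook-shaped; the bumped $u_i$ then satisfies the inequality needed for it to enter the second row legally. This is a finite case check based on whether $x$ lands in $v_1\downarrow$, strictly inside $v_1\uparrow$, or at its boundary, and whether the new row's pivot $k$ moves. The main obstacle throughout is bookkeeping: matching each of the eight queer-Knuth cases \eqref{eq_Knuth_A}--\eqref{eq_Knuth_H} to the corresponding scenario in the bumping algorithm, and treating the boundary cases (empty $v_1 \uparrow$, empty $v_1 \downarrow$ after bumping, $x$ equal to some $u_j$, and the transition between rows) cleanly so that the local crystal equivalences compose into a global one. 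Once this matching is carried out, both (1) and (2) follow by assembling the row-by-row equivalences.
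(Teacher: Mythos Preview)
Your approach to (1) is essentially the paper's: both reduce to single-letter insertion and realize the bumping step as a composite of queer Knuth moves from Proposition~\ref{prop:Knuth relation}. One small cleanup: your induction on $|T'|$ should not invoke an auxiliary tableau ``$W$ recording $w$''; instead simply insert the letters of $\read(T')$ one at a time, using at each stage that $T \leftarrow u_1 \leftarrow \cdots \leftarrow u_k$ is already known to be a semistandard decomposition tableau. That is, the single-letter cases of (1) and (2) must be proved first, and the multi-letter statements follow jointly.

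For (2) your route genuinely differs from the paper's. You propose a direct combinatorial verification that the hook-word condition and, more delicately, the maximal-hook-subword condition of Definition~\ref{def_ssdt}(1)(ii) survive each bump. The paper instead reduces (2) to the two-row case and then argues \emph{crystal-theoretically}: once (1) is in hand, $b_1 \otimes b_2 \leftarrow x$ is $\qn$-crystal equivalent to $b_1 \otimes b_2 \otimes x$, and by analyzing the direct summands of $\B(\lambda_1\epsilon_1+\lambda_2\epsilon_2)\otimes \B$ one pins down that the insertion output actually lands in $\B(\nu)$ for the appropriate strict partition $\nu$, hence is a semistandard decomposition tableau by definition. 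The advantage of the paper's method is that it replaces the check of condition~(ii), which concerns the interaction of two adjacent rows and is the genuinely subtle part, by a structural argument using the tensor-product decomposition already available from Theorem~\ref{th:decomposition}. Your approach is more elementary and self-contained, but be aware that verifying condition~(ii) for the new pair $(v_2',v_1')$ is considerably more involved than showing each row remains a hook word; it is not just a matter of the bumped letter ``entering the second row legally'', since one must rule out longer hook subwords of $v_2' v_1'$ that were not present before.
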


\begin{proof}
The first assertion follows from the queer Knuth relation. For the second assertion, it
suffices to show that $b_1 \otimes b_2 \leftarrow x$ is a
semistandard decomposition tableau for any $x \in \B$ and $b_1
\otimes b_2 \in \B(\la_1 \epsilon_1 + \la_2 \epsilon_2)$ with $\la_1
> \la_2$. Through a careful investigation on the direct summands in the various tensor products,
one conclude that $b_1 \otimes b_2 \leftarrow x$ lies in
$\B((\la_1 +1) \epsilon_1 + \la_2 \epsilon_2)$ or $\B(\la_1
\epsilon_1 + (\la_2 +1) \epsilon_2)$ or $\B(\la_1 \epsilon_1 +
\la_2 \epsilon_2 + \epsilon_3)$, as desired.
\end{proof}

Using the characterization \eqref{ch:lowest wt vector} of the
lowest weight vectors and Proposition \ref{pro_T<-x}, we obtain
the following theorem.

\begin{theorem} \cite[Theorem 3.15]{GJKKK3}
Let $\la$ and $\mu$ be strict partitions. Then there is a
$\qn$-crystal isomorphism
$$\B(\la) \otimes \B(\mu) \simeq \bigoplus_{\stackrel{T \in \B(\la) \ ; }{T \leftarrow L^{\mu} = L^{\nu}
\ \text{for some} \ \nu \in \Lambda^+ \cap P^{\ge 0}}} \B({\rm
sh}(T \leftarrow L^{\mu})).$$
\end{theorem}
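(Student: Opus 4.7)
The strategy is to parametrize the connected components of $\B(\la)\otimes\B(\mu)$ by their unique lowest weight vectors and then translate each such vector into a single semistandard decomposition tableau via the insertion $\leftarrow$. Since $V^q(\la)\otimes V^q(\mu)$ belongs to $\Oint$, its crystal $\B(\la)\otimes\B(\mu)$ decomposes as a disjoint union of connected components, each isomorphic to some $\B(\nu)$ for a strict partition $\nu$; moreover, within each such $\B(\nu)$ the lowest weight vector is unique and equal to $L^\nu$.

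First I would characterize the lowest weight vectors of $\B(\la)\otimes\B(\mu)$. Viewing $\B(\la)\otimes\B(\mu)\hookrightarrow\B^{\otimes(|\la|+|\mu|)}$ through the reading word and iterating the one-step characterization \eqref{ch:lowest wt vector} letter-by-letter through the first tensor factor, one finds that any lowest weight vector must have the form $T\otimes L^\mu$ with $T\in\B(\la)$, subject to partial-sum conditions $w_0\mu+\epsilon_{u_k}+\cdots+\epsilon_{u_N}\in w_0(\Lambda^+\cap P^{\ge0})$ for every $k$, where $u_1\cdots u_N=\read(T)$. The recursion bottoms out because $L^\mu$ is the unique lowest weight vector of $\B(\mu)$.

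Next I would invoke the insertion scheme. By Proposition \ref{pro_T<-x}(1), $T\otimes L^\mu$ is $\qn$-crystal equivalent to $T\leftarrow L^\mu$, and a crystal equivalence automatically sends lowest weight vectors to lowest weight vectors. By Proposition \ref{pro_T<-x}(2), $T\leftarrow L^\mu$ is a semistandard decomposition tableau, hence sits inside the connected $\qn$-crystal $\B({\rm sh}(T\leftarrow L^\mu))$, whose unique lowest weight vector is $L^{{\rm sh}(T\leftarrow L^\mu)}$. Therefore, whenever $T\otimes L^\mu$ is a lowest weight vector, one has $T\leftarrow L^\mu = L^\nu$ with $\nu={\rm sh}(T\leftarrow L^\mu)$; conversely any $T\in\B(\la)$ with $T\leftarrow L^\mu=L^\nu$ for some strict partition $\nu$ yields a lowest weight vector $T\otimes L^\mu$. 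This establishes a bijection between the index set of the stated sum and the set of connected components of $\B(\la)\otimes\B(\mu)$, the component of $T\otimes L^\mu$ being isomorphic to $\B({\rm sh}(T\leftarrow L^\mu))$, which is exactly the claimed decomposition.

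The main obstacle is the first step: extending the single-step characterization \eqref{ch:lowest wt vector}, which is stated for $\B\otimes\B^{\otimes N}$, to an iterated characterization on $\B(\la)\otimes\B(\mu)\subset\B^{\otimes(|\la|+|\mu|)}$ and verifying that its partial-sum conditions are equivalent to the insertion condition ``$T\leftarrow L^\mu=L^\nu$ for some strict $\nu$''. Once this equivalence is in hand, the remaining steps are formal consequences of Proposition \ref{pro_T<-x} together with the uniqueness of the lowest weight vector $L^\nu$ in each irreducible $\qn$-crystal $\B(\nu)$.
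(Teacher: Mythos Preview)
Your proposal is correct and follows essentially the same route as the paper, which simply says the theorem follows from the characterization \eqref{ch:lowest wt vector} of lowest weight vectors together with Proposition~\ref{pro_T<-x}. Your ``main obstacle'' is in fact no obstacle at all: once you know every lowest weight vector has the form $T\otimes L^\mu$ (which is the only content you need from iterating \eqref{ch:lowest wt vector}), the equivalence ``$T\otimes L^\mu$ is lowest weight $\Longleftrightarrow$ $T\leftarrow L^\mu=L^\nu$ for some $\nu$'' is automatic from the crystal equivalence in Proposition~\ref{pro_T<-x}(1) and the uniqueness of $L^\nu$ in $\B(\nu)$, so you never have to compare the partial-sum conditions with the insertion condition directly.
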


\begin{example} \label{ex_decomposition_2}
Let $n=3$, $\la=2\epsilon_1 +\epsilon_2$ and $\mu=3\epsilon_1$. By Example \ref{T <- T'}, we
get $$\young(22,:1) \leftarrow L^{3 \epsilon_1}=L^{3 \epsilon_1  +
2\epsilon_2+\epsilon_3},$$
and similarly we have
$$\young(32,:2) \leftarrow L^{3\epsilon_1}=L^{4 \epsilon_1 +2 \epsilon_2}, \ \ \ \young(33,:2) \leftarrow L^{3 \epsilon_1} =L^{5 \epsilon_1 +\epsilon_2}.$$

From easy calculations, we know that except the above cases, there is no other
tableau $T \in \B(\la)$ such that $T \leftarrow L^{3
\epsilon_1}=L^{\nu}$ for some strict partition $\nu$. Hence we
conclude that
$$\B(2 \epsilon_1 +\epsilon_2) \otimes \B(3 \epsilon_1) \simeq \B(3 \epsilon_1  + 2\epsilon_2+\epsilon_3) \op \B(4 \epsilon_1 + 2\epsilon_2) \op
\B(5\epsilon_1 + \epsilon_2).$$
\end{example}


\vskip3mm
\bibliographystyle{amsalpha}

\begin{thebibliography}{A}
\bibitem{ABS} M.F. Atiyah, R. Bott, A. Shapiro, \newblock {\em Clifford modules},
\newblock Topology {\bf 3} (1964), 3--38.

\bibitem{BKK}
G. Benkart, S.-J. Kang, M. Kashiwara,
\newblock {\em Crystal bases for the quantum superalgebra
$U_q(\mathfrak{gl}(m,n))$},
\newblock   J. Amer. Math. Soc. {\bf 13}  (2000), 293--331.

\bibitem{B} J. Brundan, \newblock{\it Kazhdan-Lusztig polynomials and character
formulae for the Lie superalgebra $\gq(n)$}, Adv. Math. \textbf{182}
(2004), 28--77.

\bibitem{Fom} S. Fomin, \newblock{\it Schur operators and Knuth correspondences},
J. Combin. Theory Ser. A. \textbf{72} (1995) 277--292.

\bibitem{G} M. Gorelik, \newblock{\it Shapovalov determinants of Q-type Lie
superalgebras}, Int. Math. Res. Pap., Article ID 96895 (2006),
1--71.

\bibitem{GJKK} D. Grantcharov, J. H. Jung, S.-J. Kang, M. Kim,
\newblock {\it Highest weight modules over quantum queer superalgebra
$U_q({\mathfrak q}(n))$}, \newblock Commun. Math. Phys. {\bf 296}
(2010), 827--860.

\bibitem{GJKKK1} D. Grantcharov, J. H. Jung, S.-J. Kang, M. Kashiwara, M. Kim,
\newblock {\it Quantum queer superalgebra and crystal bases}, \newblock  Proc. Japan
Acad. Ser. A Math. Sci. {\textbf 86} (2010), 177--182.

\bibitem{GJKKK2}
\bysame, \newblock {\it Crystal bases for the quantum queer
superalgebra}, \newblock submitted.

\bibitem{GJKKK3}
\bysame, 
\newblock {\it Crystal bases for the quantum queer superalgebra
and semistandard decomposition tableaux}, \newblock submitted.

\bibitem{Hai} M. D. Haiman, \newblock{\it On mixed insertion, symmetry,
and shifted Young tableaux}, J. Combin. Theory Ser. A. \textbf{50} (1989) 196--225.

\bibitem{HK2002}
J. Hong, S.-J. Kang, \emph{Introduction to Quantum Groups and
Crystal Bases}, Graduate Studies in Mathematics \textbf{42},
American Mathematical Society, 2002.


\bibitem{Kas90} M. Kashiwara,
\newblock {\it Crystalizing the $q$-analogue
of universal enveloping algebras},
\newblock Commun. Math. Phys.  {\bf 133} (1990) 249--260.

\bibitem{Kas91}
\bysame, \newblock{\it On crystal bases of the $q$-analogue of
universal enveloping algebras},
\newblock Duke Math. J. {\bf 63} (1991) 465--516.

\bibitem{Kas93}
\bysame, \newblock {\it Crystal base and Littelmann's refined
Demazure character formula},
\newblock Duke Math. J. \textbf{71} (1993) 839--858.

\bibitem{Kas94}
\bysame, \emph{Crystal bases of modified quantized enveloping
algebra}, Duke Math. J. \textbf{73} (1994), 383--413.

\bibitem{K} J.-H. Kwon, \newblock{\it Free Lie superalgebras and the representations of
$\mathfrak{gl}(m,n)$ and $\mathfrak{q}(n)$}, J. Korean Math. Soc.
{\bf 42} (2005), 365--386.

\bibitem{LS} D. Leites, V. Serganova,
\newblock {\it Defining relations for classical Lie superalgebras I. Superalgebras
with Cartan matrix or Dynkin-type diagram},
\newblock Proc. Topological
and Geometrical Methods in Field Theory (Eds. J. Mickelson, et
al), World Sci., Singapore, 1992, 194--201.

\bibitem{MM} J. Milnor, J. Moore, \newblock{\it On the structure of Hopf
algebras}, Ann. Math. {\bf 81} (1965), 211--264.


\bibitem{O} G. Olshanski,
\newblock {\it Quantized universal enveloping
superalgebra of type $Q$ and a super-extension of the Hecke
alegbra},
\newblock Lett. Math. Phys. {\bf 24} (1992), 93--102.

\bibitem{P} I. Penkov,
\newblock {\it Characters of typical irreducible finite-dimensional $\gq(n)$-modules},
\newblock Funct. Anal.
Appl. {\bf 20} (1986), 30--37.


\bibitem{PS} I. Penkov, V. Serganova,
\newblock {\it Characters of finite-dimensional irreducible $\gq(n)$-modules},
\newblock Lett. Math. Phys. {\bf 40} (1997) 147--158.


\bibitem{RTF} N. Reshetikhin, L. Takhtajan, L. Faddeev,
\newblock {\it Quantization of Lie groups and Lie algebras} (Russian),
\newblock Algebra i Analiz {\bf 1} (1989), 178--206; translation in
Leningrad Math. J. {\bf 1} (1990),  193--225

\bibitem{Sag} B. Sagan, \newblock{\it Shifted tableaux, Schur Q-functions, and
a conjecture of R.P. Stanley}, J. Combin. Theory Ser.
A. \textbf{45} (1987), 62--103.

\bibitem{Ser1} A. Sergeev, \newblock{\it The centre of enveloping algebra for Lie
superalgebra $Q(n, \C)$}, Lett. Math. Phys. \textbf{7} (1983),
177--179.

\bibitem{Ser}
\bysame, \newblock {\it The tensor algebra of the tautological
representation as a module over the Lie superalgebras $GL(n,m)$ and
$Q(n)$},
\newblock Mat. Sb. {\bf 123} (1984), 422--430 (in Russian).

\bibitem{Serra} L. Serrano, \newblock {\it The shifted plactic monoid},
\newblock Math. Z. {\bf 266} (2010) 363--392.

\bibitem{St} J. Stembridge, \newblock {\it  Shifted tableaux and the
projective representations of symmetric groups}, \newblock Adv.
Math. {\bf 74} (1989), 87--134.

\bibitem{Wor} D. R.  Worley, \emph{A theory of shifted Young tableaux},
Ph.D. thesis, MIT, 1984. Available at
http://hdl.handle.net/1721.1/15599.





\end{thebibliography}

\end{document}